\documentclass[a4paper]{amsart}
\usepackage{graphicx}
\usepackage{amssymb}
\usepackage{amsmath}
\usepackage{amsthm}
\usepackage{amscd}
\usepackage[all,2cell]{xy}
\usepackage{color}
\usepackage{hyperref}

\UseAllTwocells \SilentMatrices
\newtheorem{thm}{Theorem}[section]

\newtheorem{lem}[thm]{Lemma}

\newtheorem{prop}[thm]{Proposition}
\theoremstyle{definition}

\theoremstyle{remark}
\newtheorem{rem}[thm]{\bf Remark}
\numberwithin{equation}{section}

\begin{document}
\title[Homotopy categories, Leavitt path algebras and GP modules]{Homotopy categories, Leavitt path algebras, and Gorenstein projective modules}

\author[Xiao-Wu Chen,  Dong Yang] {Xiao-Wu Chen, Dong Yang}

\thanks{Chen is supported by National Natural Science Foundation of China (No. 11201446), the Fundamental
Research Funds for the Central Universities (WK0010000024) and Program for New Century Excellent Talents
in University. Yang was supported by DFG - SPP Darstellungstheorie KO1281/9-1 and JSPS.}

\subjclass[2010]{16G20, 16E35, 16E45, 18G25}
\date{\today}

\thanks{E-mail: xwchen$\symbol{64}$mail.ustc.edu.cn, dongyang2002@gmail.com}

\keywords{homotopy category, derived category, Leavitt path algebra, trivial extension, Gorenstein projective module}%

\maketitle

\dedicatory{}%
\commby{}%

\begin{abstract}
For a finite quiver without sources or sinks, we prove that the homotopy category of acyclic complexes of injective modules over the corresponding finite-dimensional algebra with radical square zero is triangle equivalent to the derived category of the Leavitt path algebra
viewed as a differential graded algebra with trivial differential, which is further triangle equivalent to the stable category of Gorenstein projective modules over the trivial extension algebra of a von Neumann regular algebra by an invertible bimodule. A related, but different, result for the homotopy category of acyclic complexes of projective modules is given. Restricting these equivalences to compact objects, we obtain various descriptions of  the singularity category of a finite-dimensional algebra with radical square zero, which contain previous results.
\end{abstract}

\section{Introduction}

Let $k$ be a field and $Q$ be a finite quiver without sources or sinks. Denote by $kQ$ the path algebra of $Q$ and by $J$ its two-sided ideal generated by arrows. Set $A=kQ/J^2$ to be the factor algebra modulo the ideal $J^2$; it is a finite-dimensional algebra with radical square zero.

We denote by $\mathbf{K}_{\rm ac}(A\mbox{-Inj})$  the homotopy category of acyclic complexes of injective $A$-modules. This category is called the \emph{stable derived category} of $A$ in \cite{Kra}, which is  a compactly generated triangulated category such that the corresponding full subcategory  consisting of compact objects is triangle equivalent to the  \emph{singularity category} $\mathbf{D}_{\rm sg}(A)$ of $A$ in the sense of \cite{Buc, Or04}. Here, we recall that $\mathbf{D}_{\rm sg}(A)$ is by definition the Verdier quotient category of the bounded derived category of finitely generated $A$-modules modulo the full subcategory consisting of perfect complexes. In a similar fashion, the homotopy category  $\mathbf{K}_{\rm ac}(A\mbox{-Proj})$ of acyclic complexes of projective $A$-modules is related to the singularity category of the opposite algebra $A^{\rm op}$ of $A$.

The aim of this paper is to describe these homotopy categories in terms of Leavitt path algebras and Gorenstein projective modules.
Restricting to the full subcategories consisting of compact objects, we obtain various descriptions of the singularity category of $A$, which contain the corresponding results in \cite{Ch11, Sm}.

Let $L(Q)$ be the \emph{Leavitt path algebra} of $Q$ in the sense of \cite{AA05, AMP}, which is naturally $\mathbb{Z}$-graded as $L(Q)=\bigoplus_{n\in \mathbb{Z}} L(Q)^n$. We view it as a differential graded algebra with trivial differential. Denote by $\mathbf{D}(L(Q))$ the \emph{derived category} \cite{Kel} of differential graded $L(Q)$-modules.

We observe that  each component $L(Q)^n$  is naturally an $L(Q)^0$-bimodule. Indeed, the algebra $L(Q)^0$ is von Neumann regular and the $L(Q)^0$-bimodules $L(Q)^1$ and $L(Q)^{-1}$ are both invertible; compare \cite{Sm, Haz}. We consider the corresponding \emph{trivial extension} algebras $\Lambda^+(Q)=L(Q)^0\ltimes L(Q)^1$ and $\Lambda^-(Q)=L(Q)^0\ltimes L(Q)^{-1}$. Indeed, there is an isomorphism $\Lambda^+(Q)\simeq \Lambda^-(Q)^{\rm op}$, induced by the involution of $L(Q)$.

For an algebra $\Lambda$, we denote by $\Lambda\mbox{-GProj}$ the category of \emph{Gorenstein projective} $\Lambda$-modules \cite{EJ95}; it is naturally a Frobenius category whose projective-injective objects are precisely projective $\Lambda$-modules. Then the \emph{stable category} $\Lambda\mbox{-\underline{GProj}}$ modulo projective modules is naturally triangulated due to a general result in \cite{Hap88}.

 Here is our main result. For notation, $Q^{\rm op}$ is  the opposite quiver of $Q$; $L(Q)^{\rm op}$ is the opposite algebra of $L(Q)$, which is also viewed as a differential graded algebra with trivial differential.

\vskip 5pt

\noindent {\bf Main Theorem.}\quad \emph{Let $Q$ be a finite quiver without sources or sinks. Keep the notation as above. Then there are triangle equivalences
\begin{enumerate}
\item $\mathbf{K}_{\rm ac}(kQ/J^2\mbox{-}{\rm Inj}) \stackrel{\sim}\longrightarrow \;\mathbf{D}(L(Q)^{\rm op})  \stackrel{\sim}\longrightarrow \Lambda^{+}(Q)\mbox{-\underline{\rm GProj}}$;
\item $\mathbf{K}_{\rm ac}(kQ/J^2\mbox{-}{\rm Proj}) \stackrel{\sim}\longrightarrow  \mathbf{D}(L(Q^{\rm op})) \stackrel{\sim}\longrightarrow  \Lambda^{-}(Q^{\rm op})\mbox{-\underline{\rm GProj}}.$
\end{enumerate}}

\vskip 5pt

 Here are three remarks on Main Theorem. (1) In general, the homotopy categories $\mathbf{K}_{\rm ac}(A\mbox{-Inj})$ and $\mathbf{K}_{\rm ac}(A\mbox{-Proj})$ are not equivalent for $A=kQ/J^2$. This fact somehow corresponds to that the definition of the Leavitt path algebra is left--right asymmetric; more precisely, for a quiver $Q$, the graded algebras $L(Q)^{\rm op}$ and $L(Q^{\rm op})$ are not isomorphic in general. (2) The triangulated categories in Main Theorem are compactly generated. These equivalences restrict to triangle equivalences on the corresponding full subcategories consisting of compact objects. We obtain that the following categories are triangle equivalent: the singularity category $\mathbf{D}_{\rm sg}(A)$ of $A$, the perfect derived category ${\rm perf}(L(Q)^{\rm op})$ of $L(Q)^{\rm op}$, and the stable category $\Lambda^+(Q)\mbox{-}\underline{\rm Gproj}$ of finitely presented Gorenstein projective $\Lambda^{+}(Q)$-modules,  which is further equivalent to the category of finitely generated graded projective $L(Q)$-modules. This contains the results in \cite{Ch11,Sm} on the singularity category of $A$. We mention that for a quiver $Q$ that consists of a single vertex with loops,  the singularity category $\mathbf{D}_{\rm sg}(kQ/J^2)$  is studied in \cite[Section 10]{AV} via a completely different approach.  (3) The consideration of Gorenstein projective modules in Main Theorem is inspired by recent work in \cite{RZ}, where an explicit description of finitely presented Gorenstein projective modules over certain trivial extension algebras is given.

The paper is devoted to the proof of Main Theorem. In Section 2, we recall some notation and basic facts on differential graded modules and then prove a version of Koszul duality for algebras with radical square zero, which allows us to pass from the homotopy category of complexes of  injective $A$-modules to the derived category of the  path algebra $kQ$, which is graded with all arrows in degree 1 and is viewed as a differential graded algebra with trivial differential. In Section 3, we study universal localization for graded algebras, which in the hereditary case gives rise to a triangle equivalence between the derived category of the universal localization and a certain perpendicular subcategory of the derived category of the algebra that is localized; see Proposition \ref{prop:perpD}. In Section 4, we recall some facts on the Leavitt path algebra $L(Q)$, which is related to $kQ$ via universal localizations and is strongly graded. In Section 5, we show that under certain conditions the stable category of Gorenstein projective modules over a trivial extension algebra is triangle equivalent to the derived category of a strongly graded algebra; see Proposition \ref{prop:loc}. In Section 6, we combine the results obtained in the previous sections and prove Theorems \ref{thm:A} and \ref{thm:B}, that contain Main Theorem. An application showing the relationship between singular equivalences, graded Morita equivalences, and derived equivalences is given in Proposition \ref{prop:equi}.

Throughout the paper, we work over a fixed field $k$.  That is, all algebras are unital algebras over $k$ and all categories and functors are $k$-linear. Modules are by default left modules. We mention that the same results in Sections 3 and 5 hold when $k$ is replaced by an  arbitrary commutative ring.

\section{Koszul duality for algebras with radical square zero}

In this section, we recall Koszul duality for  algebras with radical square zero, which relates the homotopy category of
complexes of injective modules to the derived category of a differential graded algebra with trivial differential. For this purpose, we first recall from \cite{Kel} some notation on differential graded modules.

\subsection{Differential graded modules} Let $A=\bigoplus_{n\in \mathbb{Z}} A^n$ be a $\mathbb{Z}$-graded algebra.  A (left) graded $A$-module is written as $M=\bigoplus_{n\in \mathbb{Z}} M^n$ such that $A^n.M^m\subseteq M^{n+m}$, where the dot ``." denotes the $A$-action on $M$. Nonzero elements $m$ in $M^n$ are said to be homogeneous of degree $n$, and we denote $|m|=n$. We consider $A$ as a graded $A$-module via  the multiplication.

We denote by $A\mbox{-Gr}$ the abelian category of graded $A$-modules with degree-preserving morphisms. To a graded $A$-module $M$, we associate a graded $A$-module $M(1)$ such that $M(1)=M$ as ungraded modules with the grading given by $M(1)^n=M^{n+1}$. We may associate another graded $A$-module $M[1]$ such that $M[1]^n=M^{n+1}$ and the $A$-action ``$_\circ$" on $M[1]$ is defined as $a_\circ m=(-1)^{|a|} a.m$. This gives rise to two automorphisms $(1)\colon A\mbox{-Gr}\rightarrow A\mbox{-Gr}$ and $[1]\colon A\mbox{-Gr}\rightarrow A\mbox{-Gr}$ of categories. We call the functors $(1)$ and $[1]$ the \emph{degree-shift functor} and the \emph{translation functor}, respectively. For each $n\in \mathbb{Z}$, $(n)$ and $[n]$ denote the $n$-th power of $(1)$ and $[1]$, respectively. We observe that there is a natural isomorphism $M(n)\stackrel{\sim}\longrightarrow M[n]$ in $A\mbox{-Gr}$ sending $m\in M^l$ to $(-1)^{ln} m$.

A differential graded algebra (dg algebra for short) is a graded algebra $A$ with a differential $d\colon A\rightarrow A$ of degree 1 such that $d(ab)=d(a)b+(-1)^{|a|}a d(b)$ for homogeneous elements $a, b$ in $A$. Roughly speaking, a dg algebra is a complex equipped with a compatible multiplication.

Let $A$ be a dg algebra. A \emph{ (left) differential graded} $A$-module (dg $A$-module for short) $M$ is a graded $A$-module $M=\bigoplus_{n\in \mathbb{Z}} M^n$ with a differential $d_M\colon M\rightarrow M$ of degree 1 such that $d_M(a.m)=d(a).m+(-1)^{|a|}a.d_M(m)$ for homogeneous elements $a\in A$ and $m\in M$. For a dg $A$-module $M$, denote by $H(M)=\bigoplus_{n\in \mathbb{Z}} H^n(M)$ the graded space with $H^n(M)$ the $n$-th cohomology of $M$.

We consider the base field $k$ as a trivial dg algebra.  Then a dg $k$-module is identified with a complex of vector spaces over $k$.

A morphism between dg $A$-modules is a morphism of graded $A$-modules which commutes with the differentials. We then have the category $\mathbf{C}(A)$ of dg $A$-modules. The translation functor $[1]\colon \mathbf{C}(A)\rightarrow \mathbf{C}(A)$ is defined such that the graded $A$-module structure of $M[1]$ is defined above and the differential is given by $d_{M[1]}=-d_M$. For a morphism $f\colon M\rightarrow N$ in $\mathbf{C}(A)$, its \emph{mapping cone} ${\rm Con}(f)$ is the dg $A$-module such that ${\rm Con}(f)=N\oplus M[1]$ as graded $A$-modules with the differential $d_{{\rm Con}(f)}=\begin{pmatrix} d_N & f \\
                              0 & d_{M[1]}\end{pmatrix}$.

 We denote by $\mathbf{K}(A)$  the homotopy category of dg $A$-modules. It is a triangulated category: its translation functor $[1]$ is induced from $\mathbf{C}(A)$, and each  triangle in $\mathbf{K}(A)$ is isomorphic to $M\stackrel{f}\rightarrow N\stackrel{\binom{1}{0}}\rightarrow {\rm Con}(f)\stackrel{(0, 1)}\rightarrow M[1]$ for some morphism $f$.  We denote by $\mathbf{D}(A)$ the derived category of dg $A$-modules, which is obtained from $\mathbf{C}(A)$ by formally inverting all quasi-isomorphisms. It is also triangulated with translation functor $[1]$ induced from $
\mathbf{C}(A)$.

  A \emph{right differential graded} $A$-module (right dg $A$-module for short) $M$ is a right-graded $A$-module $M=\bigoplus_{n\in \mathbb{Z}} M^n$ with a differential $d_M\colon M\rightarrow M$ of degree 1 such that $d_M(m.a)=d_M(m).a+(-1)^{|m|}m.d(a)$ for homogeneous elements $a\in A$ and $m\in M$.

We denote by $A^{\rm opp}$ the \emph{opposite dg algebra} of $A$: $A^{\rm opp}=A$ as graded spaces with the same differential, and the multiplication ``$\circ$" on $A^{\rm opp}$ is given such that $a\circ b=(-1)^{|a| |b|} ba$.  We identify a right dg $A$-module  $M$ as a left dg $A^{\rm opp}$-module as follows: $a.m=(-1)^{|a| |m|} m.a$ for homogeneous elements $a\in A^{\rm opp}$ and $m\in M$. In particular, we sometimes understand $\mathbf{K}(A^{\rm opp})$ and  $\mathbf{D}(A^{\rm opp})$ as the homotopy category and the derived category of right dg $A$-modules, respectively.

Let $M$ and $N$ be dg $A$-modules. Set ${\rm Hom}_A(M, N)^n={\rm Hom}_{A\mbox{-}{\rm Gr}}(M, N[n])$ for each $n\in \mathbb{Z}$; it  consists of $k$-linear maps $f\colon M\rightarrow N$ which are homogeneous of degree $n$ and satisfy $f(a.m)=(-1)^{n|a|}a.f(m)$ for all homogeneous elements $a\in A$. The graded vector space ${\rm Hom}_A(M, N)=\bigoplus_{n\in \mathbb{Z}} {\rm Hom}_A(M, N)^n$ has a natural differential $d$ such that $d(f)=d_N\circ f-(-1)^{|f|} f\circ d_M$. Furthermore, ${\rm End}_A(M):={\rm Hom}_A(M, M)$ becomes a dg algebra with this differential and the usual composition as multiplication.

For two  dg $A$-modules $M$ and $N$, there is a natural isomorphism
\begin{align}\label{equ:1}
{\rm Hom}_{\mathbf{K}(A)} (M, N[n])\simeq H^n({\rm Hom}_A(M, N))
\end{align}
for each $n\in \mathbb{Z}$. We observe an isomorphism ${\rm Hom}_A(A, N)\stackrel{\sim}\longrightarrow N$ of complexes  sending $f$ to $f(1)$. Then the above isomorphism induces the following isomorphism:
\begin{align}\label{equ:2}
{\rm Hom}_{\mathbf{K}(A)} (A, N[n])\simeq H^n(N).
\end{align}

A dg $A$-module $N$ is \emph{acyclic} if $H(N)=0$. Recall that $\mathbf{D}(A)$ can be realized as the Verdier quotient category of $\mathbf{K}(A)$ by its full subcategory of acyclic dg $A$-modules. A dg $A$-module $P$ is \emph{homotopically projective} provided that ${\rm Hom}_{\mathbf{K}(A)} (P, N)=0$ for any acyclic dg module $N$, which is equivalent to the condition that the canonical functor $\mathbf{K}(A)\rightarrow\mathbf{D}(A)$ induces an isomorphism
\begin{align}\label{equ:3}
{\rm Hom}_{\mathbf{K}(A)} (P, X)\simeq {\rm Hom}_{\mathbf{D}(A)} (P, X)
 \end{align}for any dg $A$-module $X$. For example, the isomorphism (\ref{equ:2}) implies that for each $n$, $A[n]$ is homotopically projective. Consequently, we have the isomorphism
\begin{align}\label{equ:3D}
{\rm Hom}_{\mathbf{D}(A)} (A, N[n])\simeq H^n(N).
\end{align}

For a triangulated category $\mathcal{T}$ and a class $\mathcal{S}$ of objects, we denote by ${\rm thick}\langle \mathcal{S} \rangle$ the smallest thick subcategory of $\mathcal{T}$ containing $\mathcal{S}$. Here, we recall that a \emph{thick} subcategory is by definition a triangulated subcategory that is closed under direct summands. If $\mathcal{T}$ has arbitrary (set-indexed) coproducts, we denote by ${\rm Loc}\langle \mathcal{S} \rangle$ the smallest triangulated subcategory of $\mathcal{T}$ which contains $\mathcal{S}$ and is closed under arbitrary coproducts.  By \cite[Proposition 3.2]{BN}, we have that ${\rm thick}\langle \mathcal{S} \rangle \subseteq {\rm Loc}\langle \mathcal{S} \rangle$. An object $M$  in $\mathcal{T}$ is \emph{compact} if the functor ${\rm Hom}_\mathcal{T}(M, -)$ commutes with arbitrary coproducts. Denote by $\mathcal{T}^c$ the full subcategory consisting of compact objects; it is a thick subcategory.

A triangulated category $\mathcal{T}$ with arbitrary coproducts is said to be \emph{compactly generated}  \cite{Kel, Nee} provided that there exists a set $\mathcal{S}$ of compact objects such that any nonzero object $T$ satisfies that ${\rm Hom}_\mathcal{T}(S, T[n])\neq 0$ for some $S\in \mathcal{S}$ and $n\in \mathbb{Z}$, which  is equivalent to the condition that $\mathcal{T}={\rm Loc}\langle \mathcal{S}\rangle$; compare  \cite[Lemma 3.2]{Nee}; in this case, we have $\mathcal{T}^c={\rm thick}\langle \mathcal{S}\rangle$. If the above set $\mathcal{S}$ consists of  a single object $S$, we call $S$ a \emph{compact generator} of $\mathcal{T}$.

For a dg algebra $A$, the homotopy category $\mathbf{K}(A)$ and then the derived category $\mathbf{D}(A)$ are triangulated categories with arbitrary coproducts; consult \cite[Lemma 1.5]{BN}. For each $n\in \mathbb{Z}$, the isomorphisms (\ref{equ:2}) and (\ref{equ:3D})  imply that $A[n]$ is compact both in $\mathbf{K}(A)$ and $\mathbf{D}(A)$, since $H^n(-)$ commutes with coproducts. Moreover, the isomorphism (\ref{equ:3D}) implies that $\mathbf{D}(A)$ is compactly generated with $A$ its compact generator; in particular, $\mathbf{D}(A)={\rm Loc}\langle A\rangle$ and $\mathbf{D}(A)^c={\rm thick}\langle A \rangle $;  see \cite[Section 5]{Kel}.  Here, we recall that a dg $A$-module $N$ is zero in $\mathbf{D}(A)$ if and only if $H(N)=0$.

The triangulated subcategory $\mathbf{D}(A)^c={\rm thick}\langle A \rangle$ of $\mathbf{D}(A)$ is called the \emph{perfect derived category }of $A$, and is denoted by ${\rm perf}(A)$.

We view an ordinary algebra $\Lambda$ as a dg algebra concentrated at degree 0. In this case, a dg $\Lambda$-module is just a complex of $\Lambda$-modules. We denote by $\Lambda\mbox{-Mod}$ the abelian category of $\Lambda$-modules. Then $\mathbf{K}(\Lambda)$ coincides with the homotopy category $\mathbf{K}(\Lambda\mbox{-Mod})$ of complexes of $\Lambda$-modules, and $\mathbf{D}(\Lambda)$ coincides with the derived category $\mathbf{D}(\Lambda\mbox{-Mod})$ of complexes of $\Lambda$-modules.

A graded algebra $A$ is viewed as a dg algebra with trivial differential. Then graded $A$-modules are viewed as dg $A$-modules with trivial differentials. On the other hand, for a dg $A$-module $X$, its cohomology $H(X)$ is naturally a graded $A$-module. This gives rise to a cohomological functor   $H\colon \mathbf{D}(A)\rightarrow A\mbox{-Gr}$. We point out that $\mathbf{D}(A)$ and $\mathbf{D}(A\mbox{-Gr})$ are different, but each complex of graded $A$-modules can be viewed as a bicomplex and taking the total complex yields a well-behaved triangle functor $\mathbf{D}(A\mbox{-Gr})\rightarrow\mathbf{D}(A)$; for example, see \cite{KalckYang13}.

\subsection{Differential graded bimodules} Let $B$ be another dg algebra. Recall that
a dg $A$-$B$-bimodule $M$ is a left dg $A$-module as well as a right dg $B$-module such that
$(a.m).b=a.(m.b)$. This implies that the canonical map $A\rightarrow {\rm End}_{B^{\rm opp}}(M)$, sending $a$ to $l_a$ with $l_a(m)=a.m$, is a homomorphism of dg algebras. Similarly, the canonical map $B\rightarrow {\rm End}_A(M)^{\rm opp}$, sending $b$ to $r_b$ with $r_b(m)=(-1)^{|b| |m|} m.b$, is a homomorphism of dg algebras.

A dg $A$-$B$-bimodule $M$ is called \emph{left quasi-balanced} provided that the canonical homomorphism $A\rightarrow {\rm End}_{B^{\rm opp}}(M)$ of dg algebras  is a quasi-isomorphism. Dually, one has the notion of \emph{right quasi-balanced bimodule}.

 Let $M$ be a dg $A$-$B$-bimodule. Consider $DM={\rm Hom}_k(M, k)$ as a dg $k$-module, where $M$ is viewed as a dg $k$-module and $k$ itself is  a dg $k$-module concentrated on degree 0. More explicitly, $(DM)^n={\rm Hom}_k(M^{-n}, k)$ and $d_{DM}(\theta)=(-1)^{n+1}\theta\circ d_M$ for $\theta\in (DM)^n$. Then $DM$ becomes a dg $B$-$A$-bimodule with the actions given by $(b.\theta)(m)=(-1)^{|b|}\theta(m.b)$ and $(\theta.a)(m)=\theta(a.m)$ for homogeneous elements $m\in M$, $a\in A$ and $b\in B$.

A dg $k$-module $M$ is said to be \emph{locally finite}, if each component $M^n$ is finite-dimensional over $k$. The following observation is direct.

\begin{lem}\label{lem:duality}
Let $M$ be a dg $A$-$B$-bimodule which is locally finite. Consider $DM$ as a dg $B$-$A$-bimodule as above. Then $M$ is left quasi-balanced if and only if $DM$ is right quasi-balanced.
\end{lem}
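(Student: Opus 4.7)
The plan is to produce a natural isomorphism of dg algebras $\phi\colon \mathrm{End}_{B^{\mathrm{opp}}}(M)\xrightarrow{\sim}\mathrm{End}_B(DM)^{\mathrm{opp}}$ that intertwines the two canonical homomorphisms out of $A$. Once this is in place, the two maps $A\to \mathrm{End}_{B^{\mathrm{opp}}}(M)$ and $A\to \mathrm{End}_B(DM)^{\mathrm{opp}}$ are isomorphic as morphisms of dg algebras, and in particular one is a quasi-isomorphism precisely when the other is.

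The construction is the obvious one: for a homogeneous right $B$-linear endomorphism $f\colon M\to M$ of degree $n$, define $\phi(f)\colon DM\to DM$ of degree $n$ by $\phi(f)(\theta)=(-1)^{|f||\theta|}\theta\circ f$ for homogeneous $\theta\in DM$. A direct sign calculation, using the formula $(b.\theta)(m)=(-1)^{|b|}\theta(m.b)$ and $(\theta.a)(m)=\theta(a.m)$ given in the excerpt, shows that $\phi(f)$ is left $B$-linear and that $\phi$ preserves the unit. For the multiplicative part, recall that the multiplication ``$\circ_{\mathrm{opp}}$'' on $\mathrm{End}_B(DM)^{\mathrm{opp}}$ is $F\circ_{\mathrm{opp}}G=(-1)^{|F||G|}G\circ F$; a Koszul sign bookkeeping then yields
\[
\phi(f)\circ_{\mathrm{opp}}\phi(g)=(-1)^{(|f|+|g|)|\theta|}\,\theta\circ f\circ g=\phi(f\circ g),
\]
and similarly $\phi$ commutes with the differentials $d(f)=d_M\circ f-(-1)^{|f|}f\circ d_M$, using the prescription $d_{DM}(\theta)=(-1)^{|\theta|+1}\theta\circ d_M$. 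To see that $\phi$ is compatible with the canonical maps from $A$, one evaluates $\phi(l_a)$: by construction $\phi(l_a)(\theta)(m)=(-1)^{|a||\theta|}\theta(a.m)=(-1)^{|a||\theta|}(\theta.a)(m)=r_a(\theta)(m)$, which is exactly the canonical right-action map $A\to \mathrm{End}_B(DM)^{\mathrm{opp}}$.

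Finally, to see that $\phi$ is an isomorphism, it suffices to check that for each integer $n$ the induced map on degree $n$ components is bijective. Each such component sits inside $\prod_{i}\mathrm{Hom}_k(M^i,M^{i+n})$, and the $k$-linear duality $\mathrm{Hom}_k(M^i,M^{i+n})\to\mathrm{Hom}_k((DM)^{-i-n},(DM)^{-i})$ is bijective because $M^i$ is finite-dimensional over $k$; the locally finite hypothesis enters precisely here. A last sign check shows that right $B$-linearity of $f$ corresponds under this duality to left $B$-linearity of $\phi(f)$, so $\phi$ restricts to a bijection on $B$-linear parts. Putting these pieces together gives the desired isomorphism of dg algebras, from which the lemma follows.

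I expect the only real obstacle to be the sign bookkeeping in verifying multiplicativity and compatibility with differentials; each step is forced by the Koszul rule, but there are several places a sign can slip, so I would carry out those checks carefully. Everything else—construction of $\phi$, bijectivity degree by degree under the locally finite hypothesis, and compatibility with the $A$-actions—is essentially formal.
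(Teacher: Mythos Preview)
Your proposal is correct and follows essentially the same approach as the paper: the paper defines exactly the same map $D\colon \mathrm{End}_{B^{\rm opp}}(M)\to \mathrm{End}_B(DM)^{\rm opp}$, $(D\theta)(f)=(-1)^{nl}f\circ\theta$, observes it is an isomorphism by local finiteness, and checks that its composite with $A\to \mathrm{End}_{B^{\rm opp}}(M)$ is the canonical map for $DM$. Your write-up simply spells out in more detail the sign checks and the degree-by-degree bijectivity that the paper leaves implicit.
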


\begin{proof}
There is a canonical homomorphism $D\colon {\rm End}_{B^{\rm opp}}(M)\rightarrow {\rm End}_{B}(DM)^{\rm opp}$ of dg algebras, sending $\theta\in {\rm End}_{B^{\rm opp}}(M)^n$ to $D\theta$. Here, for each $f\in (DM)^l$ we have $(D\theta)(f)=(-1)^{nl}f\circ \theta$, which belongs to $(DM)^{l+n}$. Since $M$ is locally finite, the homomorphism $D$ is an isomorphism. Then the composite $A\rightarrow {\rm End}_{B^{\rm opp}}(M) \stackrel{D}\rightarrow {\rm End}_{B}(DM)^{\rm opp}$ coincides with the canonical homomorphism associated to the dg bimodule $DM$. The desired result follows immediately.
\end{proof}

Let $\mathcal{T}$ be a triangulated category with arbitrary coproducts. An object $X$ is \emph{self-compact} provided that it is a compact object in ${\rm Loc}\langle X\rangle $. The following result is well known; compare \cite[4.3]{Kel} and  \cite[Appendix A]{Kra}. Recall that for a dg $A$-$B$-bimodule $M$ and a dg $A$-module $X$, ${\rm Hom}_A(M, X)$ has a natural structure of dg $B$-module.

\begin{prop}\label{prop:triangle-equi}
Let $M$ be a dg $A$-$B$-bimodule which is right quasi-balanced. Assume that $M$ is self-compact as an object in $\mathbf{K}(A)$. Consider ${\rm Loc}\langle M\rangle$ in $\mathbf{K}(A)$.  Then we have a triangle equivalence
$${\rm Hom}_A(M, -)\colon {\rm Loc}\langle M\rangle \stackrel{\sim}\longrightarrow \mathbf{D}(B).$$
\end{prop}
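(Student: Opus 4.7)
The plan is to check that the functor $F := {\rm Hom}_A(M,-)\colon \mathbf{K}(A)\to \mathbf{K}(B)$, composed with the Verdier quotient $\mathbf{K}(B)\to \mathbf{D}(B)$ and restricted to ${\rm Loc}\langle M\rangle$, satisfies the three properties: (a) $F(M)\cong B$ in $\mathbf{D}(B)$; (b) $F$ commutes with arbitrary coproducts on ${\rm Loc}\langle M\rangle$; (c) $F$ induces an isomorphism ${\rm Hom}_{\mathbf{K}(A)}(M, M[n])\to {\rm Hom}_{\mathbf{D}(B)}(F(M), F(M)[n])$ for every $n\in\mathbb{Z}$. Once these hold, the standard compactly-generated devissage delivers the asserted equivalence.

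For (a), $F(M) = {\rm End}_A(M)$ carries a natural left dg $B$-module structure inherited from the right $B$-action on $M$, and evaluation at the identity $\mathrm{id}_M$ produces a morphism $B\to F(M)$ of left dg $B$-modules whose underlying map of complexes coincides (up to the appropriate signs) with the canonical homomorphism $B\to {\rm End}_A(M)^{\rm opp}$. By the definition of right quasi-balancedness this map is a quasi-isomorphism, so $F(M)\cong B$ in $\mathbf{D}(B)$; in particular $F(M)$ is a compact generator of $\mathbf{D}(B)$. Combining this with the isomorphisms (\ref{equ:1}) and (\ref{equ:3D})---which identify both sides of (c) with the cohomologies of ${\rm End}_A(M)$ and of $B$ respectively---immediately gives (c). For (b), self-compactness of $M$ means that (\ref{equ:1}), applied in every degree, is equivalent to the statement that the canonical map $\bigoplus_i F(X_i) \to F(\bigoplus_i X_i)$ of dg $B$-modules is a quasi-isomorphism for any family $\{X_i\}$ in ${\rm Loc}\langle M\rangle$, hence an isomorphism in $\mathbf{D}(B)$.

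The devissage then runs as follows. The class $\mathcal{S}_1 \subseteq {\rm Loc}\langle M\rangle$ of objects $Y$ for which $F$ induces a bijection on ${\rm Hom}(M, Y[n])$ for all $n$ is triangulated by the five lemma and, using (b) together with the compactness of $M$ in ${\rm Loc}\langle M\rangle$ and of $B = F(M)$ in $\mathbf{D}(B)$, closed under coproducts; it contains $M$ by (c), so $\mathcal{S}_1 = {\rm Loc}\langle M\rangle$. Fixing $Y$ and varying the first argument gives a second class $\mathcal{S}_2$ that is likewise triangulated and coproduct-closed (since $\mathrm{Hom}$ sends coproducts in the first argument to products on both sides and $F$ preserves coproducts), and which contains $M$ by the previous step, hence equals ${\rm Loc}\langle M\rangle$. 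This shows $F$ is fully faithful. Essential surjectivity is automatic: the essential image is triangulated, closed under coproducts by (b), and contains $B = F(M)$, which compactly generates $\mathbf{D}(B)$.

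The main technical point is step (a)---the sign-bookkeeping needed to identify the left dg $B$-module structure on $F(M)$ with the one arising from the right-quasi-balancedness map $B\to {\rm End}_A(M)^{\rm opp}$. Once that identification is made, the remainder is the standard argument of \cite[Section~4.3]{Kel} and \cite[Appendix~A]{Kra}, adapted to the weaker hypothesis of self-compactness in place of compactness in $\mathbf{K}(A)$.
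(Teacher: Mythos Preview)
Your proof is correct and follows essentially the same route as the paper's: establish $F(M)\simeq B$ via right quasi-balancedness, use self-compactness for coproduct preservation, and then run the standard devissage to obtain full faithfulness and essential surjectivity. The only difference is cosmetic---the paper outsources the devissage to \cite[Lemma 4.2(a),(b)]{Kel} rather than spelling out the $\mathcal{S}_1$/$\mathcal{S}_2$ argument, and it records the intermediate equivalence ${\rm thick}\langle M\rangle\simeq{\rm perf}(B)$ along the way.
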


\begin{proof}
We provide a sketchy proof. Set $F={\rm Hom}_A(M, -)$. The canonical map $B\rightarrow {\rm End}_A(M)^{\rm opp}$, which is a quasi-isomorphism by assumption,  induces an isomorphism  $B\simeq F(M)$ in $\mathbf{D}(B)$. For each $n\in \mathbb{Z}$, we recall the isomorphism  (\ref{equ:3D}) $H^n(B)\simeq {\rm Hom}_{\mathbf{D}(B)}(B, B[n])$ and the isomorphism (\ref{equ:1}) $H^n({\rm Hom}_A(M, M))\simeq {\rm Hom}_{\mathbf{K}(A)}(M, M[n])$. Then the  functor $F$ induces an isomorphism $${\rm Hom}_{\mathbf{K}(A)}(M, M[n])\stackrel{\sim}\longrightarrow  {\rm Hom}_{\mathbf{D}(B)}(B, B[n]).$$ It follows that $F$ induces an equivalence between ${\rm thick}\langle M \rangle $ and ${\rm perf}(B)={\rm thick}\langle B\rangle$; see \cite[Lemma 4.2 (a)]{Kel}. The functor $F$ commutes with arbitrary coproducts since $M$ is self-compact. It follows from \cite[Lemma 4.2 (b)]{Kel} that $F$ is fully faithful. Since $\mathbf{D}(B)={\rm Loc}\langle B\rangle$, we infer that $F$ is dense and thus an equivalence.
\end{proof}

\begin{rem}\label{rem:1}
The above functor $F={\rm Hom}_A(M, -)$ sends $M$ to $B$. Moreover, for any element $b\in Z^0(B)$, the zeroth cocycle of $B$, $F$ sends the homomorphism $r_b\colon M\rightarrow M$ to $r_b\colon B\rightarrow B$. Here, $r_b$ denotes the right action of $b$.  This implies that, for any idempotent $e$ of $Z^0(B)$, $F$ sends the dg $A$-module $Me$ to $Be$.
\end{rem}

\subsection{Koszul duality} Recall that a finite quiver $Q=(Q_0, Q_1; s,t\colon Q_1\rightarrow Q_0)$ consists of a finite set $Q_0$ of vertices and a finite set $Q_1$ of arrows, where the two maps $s$ and $t$ assign to each arrow $\alpha$ its starting vertex $s(\alpha)$ and terminating vertex $t(\alpha)$, respectively. A path of length $n$ in $Q$ is a sequence $\alpha_n\cdots \alpha_2\alpha_1$ of arrows such that $s(\alpha_{i+1})=t(\alpha_i)$ for $1\leq i\leq n-1$; we call $s(p)=s(\alpha_1)$ and $t(p)=t(\alpha_n)$ the starting vertex and the terminating vertex of $p$, respectively. Thus paths of length one are just arrows. We associate to a vertex $i$ a trivial path $e_i$ of length zero, and set $s(e_i)=i=t(e_i)$. A vertex $i$ is a sink if there are no arrows starting at $i$ and a source if there are no arrows terminating at $i$.

We denote by $Q_n$ the set of paths of length $n$. Here, we identify a vertex $i$ with the trivial path $e_i$. Recall that the path algebra $kQ=\bigoplus_{n\geq 0}kQ_n$ has a basis given by paths in $Q$ and its multiplication is given by concatenation of paths. More precisely, for two paths $p$ and $q$ with $s(p)=t(q)$, the product $pq$ is the concatenation $pq$; otherwise,  $pq=0$. In particular, we have that $1=\sum_{i\in Q_0}e_i$ is a decomposition of the unit into pairwise orthogonal idempotents, and that $pe_{s(p)}=p=e_{t(p)}p$ for any path $p$.

We denote by $J$ the two-sided ideal of $kQ$ generated by arrows. Consider the quotient algebra $A=kQ/J^2$; it is a finite-dimensional algebra with radical square zero. Indeed, $A=kQ_0\oplus kQ_1$ as $k$-spaces and its Jacobson radical ${\rm rad}(A)=kQ_1$ satisfies ${\rm rad}(A)^2=0$. Elements in $A$ will be written as $(x, y)$ with $x\in kQ_0$ and $y\in kQ_1$.

We consider the following  dg $k$-module $K=\bigoplus_{n\in\mathbb{Z}} K^n$ such that $K^n=0$ for $n\geq 1$ and $K^{-n}=kQ_{n}\oplus kQ_{n+1}$ for $n
\geq 0$, and the differentials $d^{-n}\colon K^{-n}\rightarrow K^{-n+1}$  satisfy $d^{-n}((a, b))=(0, a)$ with $a\in kQ_n$ and $b\in kQ_{n+1}$ for $n\geq 1$. Observe that $H^0(K)=kQ_0$ and $H^n(K)=0$ for $n\neq 0$.

 Then $K$ is a right dg $A$-module, or equivalently, a complex of right $A$-modules. Indeed, each $K^{-n}$ is a right $A$-module such that $(a, b).(x, y)=(ax,bx+ay)$, where the product $ay$ lies in $kQ_{n+1}$. Moreover, $K^{-n}\simeq kQ_n\otimes_{kQ_0} A$ as right $A$-modules, and thus the right $A$-module $K^{-n}$ is projective. Indeed, $K$, as a complex of right $A$-modules, is a minimal projective resolution of the right $A$-module $kQ_0=A/{\rm rad}(A)$.

 The path algebra $kQ$ is graded by the length grading, that is, its homogeneous component $(kQ)^n$ of degree $n$ is $kQ_n$ for $n\geq 0$, and is zero for $n<0$. We consider it as a dg algebra with trivial differential. Set $B=(kQ)^{\rm opp}$,
 the dg opposite algebra of $kQ$.

 For a path $p$ in $Q$, we define a linear map $\delta_p\colon kQ\rightarrow kQ$ as follows: for any path $q$, $\delta_p(q)=q'$ if $q=pq'$ for some path $q'$; otherwise, $\delta_p(q)=0$.  Then $K$ is a left dg $B=(kQ)^{\rm opp}$-module as follows: for a path $p$ of length $l$ and $(a, b)\in K^{-n}$, $p.(a, b)=(-1)^{ln} (\delta_p(a), \delta_p(b))$ if $n\geq l$; otherwise, $p.(a, b)=0$. One checks readily that $K$ is a dg $B$-$A$-bimodule.

 The following fact is standard, which is essentially contained in \cite[Theorem 2.10.1]{BGS}. Recall that $K$ is isomorphic to the Koszul complex of $A$.

 \begin{lem}\label{lem:quasi-balanced}
 The dg $B$-$A$-bimodule $K$ is left quasi-balanced.
 \end{lem}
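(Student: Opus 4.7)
The plan is to check that the canonical dg-algebra map $\phi\colon B\to \mathrm{End}_{A^{\mathrm{opp}}}(K)$, sending $p\in B^n=kQ_n$ to left multiplication by $p$, is a quasi-isomorphism. I will compute the cohomology of the right-hand side via an Ext calculation and then match $\phi$ with the identity in each degree.

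The first observation is that $K$ is precisely the minimal projective resolution of the right $A$-module $kQ_0=A/\mathrm{rad}(A)$. Indeed, the natural identification $K^{-n}=kQ_n\oplus kQ_{n+1}\cong kQ_n\otimes_{kQ_0}A$ of right $A$-modules (sending $(a,b)$ to $a\otimes 1$ plus a term in $kQ_n\otimes_{kQ_0} kQ_1\cong kQ_{n+1}$) exhibits each $K^{-n}$ as projective; the augmentation $K^0\twoheadrightarrow kQ_0$ is the canonical projection modulo $\mathrm{rad}(A)$; and $\mathrm{im}(d^{-n})=0\oplus kQ_n=K^{-n+1}\cdot\mathrm{rad}(A)$, so the resolution is minimal. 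In particular $K$ is a bounded-above complex of projective right $A$-modules, hence homotopically projective in $\mathbf{K}(A^{\mathrm{opp}})$. Combining this with the quasi-isomorphism $K\xrightarrow{\simeq}kQ_0$ I obtain
\[
H^{n}(\mathrm{End}_{A^{\mathrm{opp}}}(K)) \;\cong\; \mathrm{Hom}_{\mathbf{D}(A^{\mathrm{opp}})}(kQ_0, kQ_0[n]) \;=\; \mathrm{Ext}^{n}_{A^{\mathrm{opp}}}(kQ_0, kQ_0).
\]
Applying $\mathrm{Hom}_{A^{\mathrm{opp}}}(-,kQ_0)$ to $K$, the $n$-th term becomes $\mathrm{Hom}_{(kQ_0)^{\mathrm{opp}}}(kQ_n, kQ_0)\cong kQ_n$ via the dual path basis $p\mapsto p^*$, where $p^*(p)=e_{s(p)}$ and $p^*(q)=0$ for $q\neq p$; minimality forces all differentials on this Hom-complex to vanish, so $\mathrm{Ext}^{n}_{A^{\mathrm{opp}}}(kQ_0,kQ_0)\cong kQ_n$ in every degree.

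It remains to verify that $\phi$ matches this identification. For a path $p\in Q_n$, the component $K^{-n}\to K^0$ of $\phi(p)$ sends $(a,b)$ to $\pm(\delta_p(a),\delta_p(b))$; post-composing with the augmentation $K^0\to kQ_0$ and restricting to the $kQ_n$-summand of $K^{-n}$ gives the functional $q\mapsto \delta_p(q)$, which equals $e_{s(p)}$ for $q=p$ and $0$ for every other length-$n$ path, i.e.\ the cocycle $p^*$ (up to a unit scalar). Hence $\phi$ induces an isomorphism on $H^n$ for every $n$ and is therefore a quasi-isomorphism. The main obstacle is this last step: reconciling the Koszul sign $(-1)^{\ell n}$ built into the $B$-action $p.(a,b)=(-1)^{\ell n}(\delta_p(a),\delta_p(b))$ with the sign in the translation functor $[1]$, so that $\phi$ really induces a nonzero scalar multiple of the identity on each $kQ_n$; the remaining minimal-resolution bookkeeping is routine.
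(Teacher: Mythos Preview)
Your proof is correct and follows essentially the same route as the paper: both arguments use that $K$ is the minimal projective resolution of $kQ_0$ to identify $H^n(\mathrm{End}_{A^{\mathrm{opp}}}(K))\cong\mathrm{Ext}^n_{A^{\mathrm{opp}}}(kQ_0,kQ_0)\cong kQ_n$, and then verify that the canonical map $B\to\mathrm{End}_{A^{\mathrm{opp}}}(K)$ induces an isomorphism on cohomology. The only difference is in the final step: the paper argues by injectivity plus a dimension count (observing that every coboundary satisfies $f(K^{-n})\subseteq kQ_1\subseteq K^0$, while $\rho(p)((p,0))=(-1)^n(e_{s(p)},0)\notin kQ_1$), whereas you compute explicitly that $\phi(p)$ represents the dual basis element $\pm p^*$. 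Your concern about signs is harmless: the sign is uniformly $(-1)^{n^2}=(-1)^n$ on paths of length $n$, and in any case a diagonal matrix with entries $\pm 1$ is invertible, so the map on $H^n$ is an isomorphism regardless.
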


 \begin{proof}
 Denote by $Z^n$ and $C^n$ the $n$-th cocycle and coboundary of ${\rm End}_{A^{\rm opp}}(K)$. We observe that any element $f\colon K\rightarrow K$ in $C^n$ satisfies that $f(K^{-n})\subseteq kQ_{1}\subseteq K^0$.  Denote by $H({\rm End}_{A^{\rm opp}}(K))$ the cohomology algebra of ${\rm End}_{A^{\rm opp}}(K)$.   Recall that $K$, viewed as a complex of right $A$-modules, is a minimal projective resolution of $kQ_0$. It follows that $ H^n({\rm End}_{A^{\rm opp}}(K)) \simeq {\rm Ext}^n_{A^{\rm opp}}(kQ_0, kQ_0) \simeq {\rm Hom}_{(kQ_0)^{\rm opp}}(kQ_n, kQ_0)$, whose dimension equals the cardinality of $Q_n$ and thus the dimension of $B^n$.

 Consider the canonical map $\rho\colon B\rightarrow {\rm End}_{A^{\rm opp}}(K)$ induced by the left $B$-action. Since $B$ is a dg algebra with trivial differential, we have $\rho(kQ_n)\subseteq Z^n$. Take a nonzero  $x=\sum_{i=1}^m \lambda_i p_i\in B^n$ with nonzero scalars $\lambda_i$ and pairwise distinct paths $p_i$ of length $n$. Consider $(p_1, 0)\in K^{-n}$. Then $\rho(x) ((p_1, 0))=(-1)^n(e_{s(p_1)}, 0)\in K^0$, which is not in $kQ_{1}.$ In particular, $\rho(x)$ does not belong to $C^n$. Taking cohomologies, we get an embedding $B\rightarrow H({\rm End}_{A^{\rm opp}}(K))$ of graded algebras.  By comparing dimensions,  we infer that this embedding is an isomorphism. This proves that $\rho$ is a quasi-isomorphism.
 \end{proof}

Recall that $A\mbox{-Mod}$ denotes the  abelian category of left $A$-modules. Then we identify $\mathbf{K}(A)$ with $\mathbf{K}(A\mbox{-Mod})$. Denote by $A\mbox{-mod}$ the category of finitely generated $A$-modules, and by $A\mbox{-Inj}$ the category of injective $A$-modules.  We consider  $\mathbf{K}(A\mbox{-Inj})$ the homotopy category of complexes of injective $A$-modules, which is a triangulated subcategory of $\mathbf{K}(A)$ that is closed under coproducts. Denote by $\mathbf{D}^b(A\mbox{-mod})$ the bounded derived category of $A\mbox{-mod}$.

 There is a full embedding ${\bf i}\colon \mathbf{D}^b(A\mbox{-mod})\rightarrow \mathbf{K}(A\mbox{-Inj})$ sending a bounded complex $X$ to its homotopically injective resolution ${\bf i}X$. Here, we recall that each bounded complex $X$ of $A$-modules admits a quasi-isomorphism $X\rightarrow {\bf i}X$ with ${\bf i}X$ a bounded-below complex of injective $A$-modules. The homotopy category $\mathbf{K}(A\mbox{-Inj})$ is compactly generated, and the functor ${\bf i}$ induces a triangle equivalence $\mathbf{D}^b(A\mbox{-mod}) \stackrel{\sim}\longrightarrow \mathbf{K}(A\mbox{-Inj})^c$; see \cite[Section 2]{Kra}.

 For each vertex $i$ in $Q$, we define a  right-graded $kQ$-module $T_i$ as follows. If $i$ is a sink in $Q$, we set $T_i=e_i kQ$. Otherwise, consider the monomorphism $\eta_i \colon e_ikQ(-1) \rightarrow \bigoplus_{\{\alpha\in Q_1\; | \; s(\alpha)=i\}} e_{t(\alpha)}kQ$,  which sends a path $p$ that ends at $i$ to $\sum_{\{\alpha\in Q_1\; | \; s(\alpha)=i\}} \alpha p$. Here, $(-1)$ is the inverse of the degree-shift functor $(1)$. Set $T_i$ to be the cokernel of $\eta_i$, that is, we have the following exact sequence of right-graded $kQ$-modules:
  \begin{align}\label{equ:4}
  0 \longrightarrow e_ikQ(-1) \stackrel{\eta_i}\longrightarrow \bigoplus_{\{\alpha\in Q_1\; | \; s(\alpha)=i\}} e_{t(\alpha)}kQ \longrightarrow T_i\longrightarrow 0.
  \end{align}

 We denote by $Q^{\rm op}$ the opposite quiver of $Q$. Then $kQ^{\rm op}$ is the corresponding path algebra. We observe that $kQ^{\rm op}=(kQ)^{\rm op}$. Here, for any algebra $\Lambda$, $\Lambda^{\rm op}$ denotes the usual opposite algebra of $\Lambda$. Recall that $B=(kQ)^{\rm opp}$ is the opposite dg algebra of $kQ$. These is an isomorphism of graded algebras
 \begin{align}\label{equ:graded}
  kQ^{\rm op}\stackrel{\sim}\longrightarrow B,
  \end{align}
  which sends a path $p$ of length $l$ to $(-1)^{\frac{l(l+1)}{2}} p$.

 We identify right $kQ$-modules as $kQ^{\rm op}$-modules. We mention that the graded $kQ^{\rm op}$-module $T_i$ is indecomposable, since ${\rm End}_{kQ^{\rm op}\mbox{-}{\rm Gr}}(T_i)\simeq k$.

 For each vertex $i$, we denote by $G_i$ the corresponding  simple graded $kQ^{\rm op}$-module that concentrated on degree 0. In other words, we have an exact sequence of graded modules
  \begin{align}\label{equ:5}
  0 \longrightarrow \bigoplus_{\{\alpha\in Q_1\; | \; t(\alpha)=i\}}e_{s(\alpha)}kQ(-1) \stackrel{\xi_i}\longrightarrow  e_ikQ \longrightarrow G_i\longrightarrow 0,
  \end{align}
 where $\xi_i$ sends a path $p$ in $e_{s(\alpha)} kQ$ to $\alpha p$. We note that if $i$ is a source then $\xi_i=0$.

For each vertex $i$ of $Q$, we denote by $S_i$ the corresponding simple $kQ/J^2$-module, and by $P_i$ and $I_i$ its projective cover and injective hull, respectively. We identify modules as stalk complexes concentrated at degree 0.

We have the following version of Koszul duality; compare \cite[Theorem 2.12.1]{BGS}, \cite[Section 10]{Kel} and \cite[Example 5.7]{Kra}.

\begin{thm}\label{prop:Koszul}
Let $Q$ be a finite quiver. Then we have a triangle equivalence
$$ \mathbf{K}(kQ/J^2\mbox{-{\rm Inj}})\stackrel{\sim}\longrightarrow \mathbf{D}(kQ^{\rm op}),$$
which sends ${\bf i} S_i$ to $kQ^{\rm op}e_i$,  ${\bf i}P_i$ to $T_i$,  and $I_i$ to $G_i$ for each vertex $i$.
\end{thm}

We restrict the above equivalence to the subcategories consisting  of compact objects and obtain a triangle equivalence $\mathbf{D}^b(kQ/J^2\mbox{-mod})\simeq {\rm perf}(kQ^{\rm op})$, which is well known; it can be obtained, for example, by applying Lemma~\ref{lem:quasi-balanced} and \cite[Lemma 10.3, The 'exterior' case]{Kel}.

\begin{proof}
Recall that $B=(kQ)^{\rm opp}$ is isomorphic to $kQ^{\rm op}$; see (\ref{equ:graded}).  In particular, we view $T_i$ and $G_i$ as graded $B$-modules. Set $A=kQ/J^2$.

Recall the dg $B$-$A$-bimodule $K$ as above, and then the dual $M=DK$ is naturally a dg $A$-$B$-bimodule; moreover, since $K$ is a projective resolution of the right $A$-module $kQ_0$, $M$ is an injective resolution of $kQ_0$ as a left $A$-module, that is, $M\simeq {\bf i} kQ_0$ in $\mathbf{K}(A)$. It follows from \cite[Proposition 2.3]{Kra} that $M$ is self-compact in $\mathbf{K}(A)=\mathbf{K}(A\mbox{-Mod})$ and ${\rm Loc}\langle M\rangle=\mathbf{K}(A\mbox{-{\rm Inj}})$. Here, we use implicitly the fact that $\mathbf{D}^b(A\mbox{-mod})={\rm thick}\langle kQ_0\rangle$.

By Lemmas \ref{lem:duality} and  \ref{lem:quasi-balanced}, the dg $A$-$B$-bimodule $M$ is right quasi-balanced. Then the desired triangle equivalence follows from Proposition \ref{prop:triangle-equi}. More precisely, we obtain a triangle equivalence $F={\rm Hom}_A(M, -)\colon \mathbf{K}(A\mbox{-Inj})\stackrel{\sim}\longrightarrow \mathbf{D}(B)$.

We observe that $Me_i$ is an injective resolution of $S_i$, that is, ${\bf i} S_i\simeq Me_i$. By Remark \ref{rem:1} we infer $F({\bf i} S_i)\simeq F(Me_i)\simeq Be_i$.

If $i$ is a sink, $P_i\simeq S_i$ and $T_i=kQ^{\rm op}e_i$. Then we already have $F({\bf i}P_i)=T_i$. If $i$ is not a sink, the exact sequence (\ref{equ:4}) of graded $B$-modules induces a triangle in $\mathbf{D}(B)$
$$Be_i[-1]\longrightarrow \bigoplus_{\{\alpha\in Q_1\; | \; s(\alpha)=i\}} Be_{t(\alpha)} \longrightarrow T_i\longrightarrow Be_i.$$
Here, we recall that $Be_i[-1]\simeq Be_i(-1)$; see Subsection 2.1. Applying a quasi-inverse $F^{-1}$ of $F$, we have a triangle in $\mathbf{K}(A\mbox{-{\rm Inj}})$
$$\mathbf{i} S_i[-1] \longrightarrow  \bigoplus_{\{\alpha\in Q_1\; | \; s(\alpha)=i\}} \mathbf{i}S_{t(\alpha)} \longrightarrow F^{-1}(T_i)\longrightarrow \mathbf{i} S_i. $$

Recall the fully faithful triangle functor ${\bf i}\colon \mathbf{D}^b(A\mbox{-mod})\rightarrow \mathbf{K}(A\mbox{-Inj})$. It follows that
there exists a bounded complex $X$ of $A$-modules such that $F^{-1}(T_i)={\bf i}X$ with  a triangle in $\mathbf{D}^b(A\mbox{-mod})$
$$S_i[-1]\longrightarrow \bigoplus_{\{\alpha\in Q_1\; | \; s(\alpha)=i\}} S_{t(\alpha)}\longrightarrow X \longrightarrow S_i.$$
This triangle implies that $X$ is concentrated on degree 0, that is, $X$ is isomorphic to its zeroth cohomology $H^0(X)$; moreover, there is an exact sequence of $A$-modules
$$0\longrightarrow \bigoplus_{\{\alpha\in Q_1\; | \; s(\alpha)=i\}} S_{t(\alpha)}\longrightarrow H^0(X) \longrightarrow S_i\longrightarrow 0.$$
 Recall that the graded $B$-module $T_i$ is indecomposable,  and thus $T_i$ is an indecomposable object in $\mathbf{D}(B)$. It follows that $H^0(X)$ is an indecomposable $A$-module whose top is $S_i$. So we have an epimorphism $P_i\rightarrow H^0(X)$ of $A$-modules; it is an isomorphism by comparing dimensions. Here, we use the fact that the dimension of $P_i$ equals the number of arrows starting at $i$ plus one.  We have  that $F^{-1}(T_i)\simeq {\bf i} X\simeq {\bf i} H^0(X)\simeq {\bf i} P_i$. This implies that $F({\bf i} P_i)=T_i$.

The same argument yields $F(I_i)=G_i$. We apply $F^{-1}$ to the triangle induced by the exact sequence (\ref{equ:5}), and then by a dimension argument we have $F^{-1}(G_i)=I_i$. We omit the details. \end{proof}

\section{Universal localization of graded algebras}

In this section, we study universal localizations of graded algebras and  prove a triangle equivalence induced by a universal localization of a graded-hereditary algebra.

Let $A=\bigoplus_{n\in \mathbb{Z}} A^n$ be a graded algebra. We consider the abelian category $A\mbox{-Gr}$ of graded  $A$-modules.
Recall that for an integer $d$, there is the degree-shift functor $(d)\colon A\mbox{-Gr}\rightarrow A\mbox{-Gr}$.

 Let $\Sigma=\{\xi_i\colon P_i\rightarrow Q_i\}_{i\in I}$ be a family of morphisms between finitely generated graded projective $A$-modules. We denote by $\Sigma^\perp$ the full subcategory of $A\mbox{-Gr}$ consisting of modules $M$ such that ${\rm Hom}_{A\mbox{-}{\rm Gr}} (\xi_i, M(d))$ are isomorphisms for all $i\in I$ and $d\in \mathbb{Z}$.

We recall from \cite{Sch} the notion of universal localization of algebras. Let $R$ be an algebra and $\sigma$ be a family of morphisms between finitely generated projective $A$-modules. A homomorphism $\theta\colon R\rightarrow S$ of algebras is called $\sigma$-inverting if for each morphism $\xi \in \sigma$, the morphism $S\otimes_R \xi$ in $S\mbox{-Mod}$ is invertible; $\theta$ is called a universal localization with respect to $\sigma$ if in addition each $\sigma$-inverting homomorphism $\theta'\colon R\rightarrow S'$ factors uniquely through $\theta$.

We have the graded version of the above notion. Let $B=\bigoplus_{n\in \mathbb{Z}} B^n$ be another graded algebra. A homomorphism $\theta\colon A\rightarrow B$ of graded algebras is \emph{$\Sigma$-inverting} if for all $\xi\in \Sigma$, the morphism $B\otimes_A \xi$  in $B\mbox{-Gr}$ is invertible; $\theta$ is called a \emph{graded universal localization} with respect to $\Sigma$ if in addition every $\Sigma$-inverting homomorphism $\theta'\colon A\rightarrow B'$ of graded algebras factors uniquely through $\theta$.

For a graded algebra $A$, we denote by $U\colon A\mbox{-Gr}\rightarrow A\mbox{-Mod}$ the forgetful functor. For a category $\mathcal{C}$, denote by ${\rm Fun}(\mathcal{C})$ the category of contravariant functors from $\mathcal{C}$ to $k\mbox{-Mod}$, and by ${\rm Mod}(\mathcal{C})$ the full subcategory consisting of $k$-linear functors when $\mathcal{C}$ is $k$-linear. A graded algebra $A$ is \emph{left graded-hereditary} if ${\rm Ext}^2_{A\mbox{-}{\rm Gr}}(-, -)$ vanishes.

  The following result is standard; compare \cite[Chapter 4]{Sch}.

  \begin{prop}\label{prop:gradeduniloc}
  Let $A$ and $\Sigma$ be as above. Then a graded universal localization $\theta_\Sigma\colon A\rightarrow A_\Sigma$ of $A$ with respect to $\Sigma$ exists, and is unique up to isomorphism. Moreover, the following statements hold:
  \begin{enumerate}
  \item the underlying homomorphism $\theta_\Sigma\colon A\rightarrow A_\Sigma$ of ungraded algebras is a universal localization of $A$ with respect to $\sigma=U(\Sigma)$;
  \item the functor of restricting  scalars $\theta_\Sigma^*\colon A_\Sigma\mbox{-{\rm Gr}} \rightarrow A\mbox{-{\rm Gr}}$ induces an equivalence $$A_\Sigma \mbox{-{\rm Gr}}\stackrel{\sim}\longrightarrow \Sigma^\perp;$$
      \item for any graded $A_\Sigma$-modules $M, N$, we have natural isomorphisms \\ ${\rm Hom}_{A\mbox{-}{\rm Gr}}(M, N)\simeq {\rm Hom}_{A_\Sigma\mbox{-}{\rm Gr}}(M, N)$ and ${\rm Ext}^1_{A\mbox{-}{\rm Gr}} (M, N)\simeq {\rm Ext}^1_{A_\Sigma\mbox{-}{\rm Gr}}(M, N)$;
          \item if $A$ is left graded-hereditary, so is $A_\Sigma$.
  \end{enumerate}
  \end{prop}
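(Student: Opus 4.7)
The plan is to mimic Schofield's construction of the ungraded universal localization \cite{Sch} inside the graded category, and then transfer the standard ungraded consequences.

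\textbf{Existence, uniqueness, and (1).} I would construct $A_\Sigma$ by choosing homogeneous bases of each $P_i$ and $Q_i$, writing $\xi_i$ as a matrix of homogeneous elements of $A$, and adjoining to $A$ new homogeneous generators of the appropriate degrees forming the entries of a formal inverse matrix, subject to the matrix relations $\xi_i^{-1}\xi_i=\mathrm{id}$ and $\xi_i\xi_i^{-1}=\mathrm{id}$. Since all generators and relations are homogeneous, $A_\Sigma$ carries a natural $\mathbb{Z}$-grading and $\theta_\Sigma$ is a homomorphism of graded algebras; the universal property in the graded setting is immediate from the construction, and uniqueness is formal. Because the very same matrix presentation solves the ungraded universal problem, the forgetful map $U\theta_\Sigma$ is Schofield's universal localization for $\sigma=U(\Sigma)$, proving (1).

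\textbf{Part (2).} That $\theta_\Sigma^*$ is fully faithful follows from $\theta_\Sigma$ being an epimorphism of graded rings, which is built into the universal property. Its image lies in $\Sigma^\perp$ because $A_\Sigma\otimes_A\xi_i$ is invertible in $A_\Sigma\mbox{-}{\rm Gr}$ by construction. For essential surjectivity I would argue as follows: for $M\in\Sigma^\perp$, the graded endomorphism ring $E:=\bigoplus_{d\in\mathbb{Z}}{\rm Hom}_{A\mbox{-}{\rm Gr}}(M,M(d))$ receives a graded map from $A$ that is $\Sigma$-inverting (the defining condition on $\Sigma^\perp$ supplies canonical inverses of the induced maps), so by the universal property of $\theta_\Sigma$ it factors through $A_\Sigma$, equipping $M$ with the desired graded $A_\Sigma$-module structure. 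Uniqueness of the extension gives naturality in $M$.

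\textbf{Parts (3) and (4).} The Hom-identification in (3) is immediate from (2). For the Ext$^1$-identification, apply the five-lemma to the natural transformation ${\rm Hom}_{A\mbox{-}{\rm Gr}}(Q_i,-(d))\to{\rm Hom}_{A\mbox{-}{\rm Gr}}(P_i,-(d))$ evaluated on a short exact sequence $0\to N\to E\to M\to 0$ with $M,N\in\Sigma^\perp$: one concludes $E\in\Sigma^\perp$, so $\Sigma^\perp$ is closed under extensions in $A\mbox{-}{\rm Gr}$ and the two Ext$^1$-groups coincide. For (4), assume $A$ is left graded-hereditary and fix $A_\Sigma$-modules $M,N$. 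Pick a graded $A_\Sigma$-projective presentation $P_1\to P_0\to M\to 0$ with kernel $K=\ker(P_0\to M)$. Dimension shifting over $A_\Sigma$ yields ${\rm Ext}^2_{A_\Sigma\mbox{-}{\rm Gr}}(M,N)\simeq{\rm Ext}^1_{A_\Sigma\mbox{-}{\rm Gr}}(K,N)$, which equals ${\rm Ext}^1_{A\mbox{-}{\rm Gr}}(K,N)$ by (3). The long exact sequence over $A$ arising from $0\to K\to P_0\to M\to 0$, combined with ${\rm Ext}^1_{A\mbox{-}{\rm Gr}}(P_0,N)={\rm Ext}^1_{A_\Sigma\mbox{-}{\rm Gr}}(P_0,N)=0$ (by (3) plus $A_\Sigma$-projectivity of $P_0$) and ${\rm Ext}^2_{A\mbox{-}{\rm Gr}}(M,N)=0$ (hereditariness of $A$), then forces ${\rm Ext}^1_{A\mbox{-}{\rm Gr}}(K,N)=0$.

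The main obstacle, I expect, is the essential surjectivity step in (2): one must check that the $A_\Sigma$-action produced by the universal property is genuinely graded, which requires tracing the formal inverses of the $\xi_i$ through the graded structure and verifying that they raise or lower degrees by exactly the right amounts. Once (2) is in place, (3) follows by a purely formal five-lemma argument, and (4) is a short dimension-shifting bootstrap of (3).
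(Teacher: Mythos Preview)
Your overall strategy—a direct matrix/generator-and-relations construction of $A_\Sigma$ in place of the paper's categorical route via Gabriel--Zisman localization of $A\mbox{-grproj}$—is legitimate and in some ways more transparent. Parts (1), (3), and (4) are fine; your arguments differ from the paper's (which uses a Laurent-polynomial trick for (1) and a right-exactness criterion for (4)) but are correct.

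The genuine gap is in your essential surjectivity argument for (2). You propose to take $M\in\Sigma^\perp$ and consider the graded map from $A$ into
\[
E=\bigoplus_{d\in\mathbb{Z}}{\rm Hom}_{A\mbox{-}{\rm Gr}}(M,M(d)),
\]
then invoke the universal property. But there is no such map: the left $A$-action on $M$ sends $a\in A^n$ to the $k$-linear map $a\cdot(-)\colon M\to M(n)$, and this map is \emph{not} $A$-linear unless $a$ is central. So the action map lands in the graded $k$-linear endomorphism ring, not in $E$. Consequently the factorization through $A_\Sigma$ you want cannot be produced this way.

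There are two standard repairs compatible with your framework. The first is to work directly with your matrix presentation: if $\xi_i$ is written as a homogeneous matrix, the hypothesis that ${\rm Hom}_{A\mbox{-}{\rm Gr}}(\xi_i,M(d))$ is an isomorphism for all $d$ says precisely that the induced $k$-linear map between the relevant direct sums of shifted copies of $M$ is bijective; its inverse tells you how the adjoined generators of $A_\Sigma$ must act on $M$, and one checks by hand that this yields a graded $A_\Sigma$-module structure extending the $A$-action. The second is to replace $E$ by the graded ring of $k$-linear homogeneous endomorphisms of $M$ and show that the resulting map from $A$ is $\Sigma$-inverting; this requires identifying $B\otimes_A P_i$ for $B$ that ring, which is a bit delicate. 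The paper sidesteps both issues by identifying $A\mbox{-Gr}$ with a functor category on $A\mbox{-grproj}$ and reading off $\Sigma^\perp$ as the module category over the localized category $\mathcal{P}_\Sigma$, which is then matched with $A_\Sigma\mbox{-Gr}$.
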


  \begin{proof}
  The uniqueness of $\theta_\Sigma$ is trivial from the definition. The existence follows from the argument in \cite[Theorem 4.1]{Sch} adapted for categories with automorphisms, which we are going to sketch.

  The category $\mathcal{P}=A\mbox{-grproj}$ of finitely generated graded projective $A$-modules carries an automorphism $(1)$,  the degree-shift functor. Denote by $\mathcal{P}[\Sigma'^{-1}]$ the Gabriel--Zisman localization \cite{GZ67} of $\mathcal{P}$ with respect to  $\Sigma'=\{\xi (d)\; |\; \xi\in \Sigma, d\in \mathbb{Z}\}$. Then $\mathcal{P}[\Sigma'^{-1}]$ has a natural automorphism and the canonical functor $q\colon \mathcal{P}\rightarrow \mathcal{P}[\Sigma'^{-1}]$ commutes with the corresponding automorphisms. Take $\mathcal{P}[\Sigma'^{-1}]^{\rm lin}$ to be  the $k$-linearization of $\mathcal{P}[\Sigma'^{-1}]$: the objects of $\mathcal{P}[\Sigma'^{-1}]^{\rm lin}$ and $\mathcal{P}[\Sigma'^{-1}]$ are the same; for two objects $X, Y$, ${\rm Hom}_{{\mathcal{P}[\Sigma'^{-1}]}^{\rm lin}}(X, Y)$ is the  $k$-vector space with basis ${\rm Hom}_{\mathcal{P}[\Sigma'^{-1}]}(X, Y)$ and the composition of $\mathcal{P}[\Sigma'^{-1}]^{\rm lin}$ is induced from $\mathcal{P}[\Sigma'^{-1}]$. Moreover, $\mathcal{P}[\Sigma'^{-1}]^{\rm lin}$ inherits an automorphism from $\mathcal{P}[\Sigma'^{-1}]$, and the  canonical functor ${\rm can}\colon \mathcal{P}[\Sigma'^{-1}]\rightarrow \mathcal{P}[\Sigma'^{-1}]^{\rm lin}$ commutes with these automorphisms.

  We consider the composite $F\colon\mathcal{P}\stackrel{q}\rightarrow \mathcal{P}[\Sigma'^{-1}]\stackrel{\rm can}\rightarrow \mathcal{P}[\Sigma'^{-1}]^{\rm lin}$. This functor  is not necessarily $k$-linear. Denote by $\mathcal{I}$ the two-sided ideal of $\mathcal{P}[\Sigma'^{-1}]^{\rm lin}$ generated by $F(f+g)-F(f)-F(g)$ and $F(\lambda f)-\lambda F(f)$ for $\lambda\in k$, $f, g\in {\rm Hom}_\mathcal{P}(P, Q)$ and $P, Q\in \mathcal{P}$. We have the factor category $\mathcal{P}_\Sigma:=\mathcal{P}[\Sigma'^{-1}]^{\rm lin}/\mathcal{I}$; it is a $k$-linear category with an automorphism which is also denoted by $(1)$.  Denote by $\pi\colon \mathcal{P}[\Sigma'^{-1}]^{\rm lin}\rightarrow \mathcal{P}_\Sigma$ the canonical functor. Then $\pi\circ F\colon \mathcal{P}\rightarrow \mathcal{P}_\Sigma$ is $k$-linear which acts on objects as the identity. In particular, every object in $\mathcal{P}_\Sigma$ is a direct summand of an object $\bigoplus_{i=1}^n A(d_i)$ for some $n\geq 1$ and $d_i\in \mathbb{Z}$.

  Consider the graded algebra $\bigoplus_{n \in \mathbb{Z}} {\rm Hom}_{\mathcal{P}_\Sigma}(A, A(n))$ and define $A_\Sigma$ to be its opposite algebra. The functor $\pi\circ F\colon \mathcal{P}\rightarrow \mathcal{P}_\Sigma$ induces a homomorphism $\theta_\Sigma\colon A\rightarrow A_\Sigma$  of graded algebras, which is universal with respect to the $\Sigma$-inverting property.

   We mention that the idempotent completion  $\widetilde{\mathcal{P}_\Sigma}$ of $\mathcal{P}_\Sigma$, as categories with automorphisms, is equivalent to $A_\Sigma\mbox{-grproj}$, the category of finitely generated projective $A_\Sigma$-modules. Then the composite functor $\mathcal{P}\stackrel{\pi\circ F}\rightarrow \mathcal{P}_\Sigma\rightarrow \widetilde{\mathcal{P}_\Sigma}$ is identified with the tensor functor $A_\Sigma\otimes_A-\colon A\mbox{-grproj}\rightarrow A_\Sigma\mbox{-grproj}$. Here, $\mathcal{P}_\Sigma\rightarrow \widetilde{\mathcal{P}_\Sigma}$ is the canonical embedding of the idempotent completion.

For (1), we recall that for an ungraded algebra $R$ the Laurent polynomial algebra $R[x, x^{-1}]$ is graded by ${\rm deg}\; R=0$, ${\rm deg}\; x=1$ and ${\rm deg}\; x^{-1}=-1$. There is a bijection from the set of algebra homomorphisms from $A$ to $R$  to  the set of graded algebra homomorphisms from $A$ to $R[x, x^{-1}]$, sending $\phi\colon A\rightarrow R$ to $\phi^h\colon A\rightarrow R[x, x^{-1}]$ such that $\phi^h(a)=\phi(a)x^n$ for any $a\in A^n$; moreover, $\phi$ is $\sigma$-inverting if and only if $\phi^h$ is $\Sigma$-inverting. Here, we use the equivalence $R[x, x^{-1}]\mbox{-Gr}\stackrel{\sim}\longrightarrow R\mbox{-Mod}$ of categories, which sends a graded module $M$ to its zeroth component; compare \cite[Theorem 2.8]{Dade}. This bijection implies that $\theta_\Sigma$ is a universal localization with respect to $\sigma$.

For (2), we recall the equivalence $A\mbox{-Gr} \stackrel{\sim}\longrightarrow {\rm Mod}(\mathcal{P})$ which sends a graded $A$-module $M$ to the functor ${\rm Hom}_{A\mbox{-}{\rm Gr}}(-, M)$ restricted on $\mathcal{P}$. It follows that $\Sigma^\perp$ is equivalent to the category of $k$-linear functors which invert morphisms in $\Sigma'$. Recall from \cite[Lemma I.1.2]{GZ67} that ${\rm Fun}(\mathcal{P}[\Sigma'^{-1}])$ identifies with the subcategory of ${\rm Fun}(\mathcal{P})$ consisting of functors which invert morphisms in $\Sigma'$. Hence, $\Sigma^\perp$ is equivalent to the subcategory of ${\rm Fun}(\mathcal{P}[\Sigma'^{-1}])$ consisting of contravariant functors $G\colon \mathcal{P}[\Sigma'^{-1}]\rightarrow k\mbox{-Mod}$ such that $G(q(f+g))=G(q(f)+q(g))$ and $G(q(\lambda f))=\lambda G(q(f))$ for all $f, g\in {\rm Hom}_\mathcal{P}(P, Q)$ and $\lambda \in k$.  Here, we recall the canonical functor $q\colon \mathcal{P}\rightarrow \mathcal{P}[\Sigma'^{-1}]$.

We observe the obvious equivalence ${\rm Fun}(\mathcal{P}[\Sigma'^{-1}])\simeq {\rm Mod}(\mathcal{P}[\Sigma'^{-1}]^{\rm lin})$. Then $\Sigma^\perp$ is further equivalent to the subcategory of ${\rm Mod}(\mathcal{P}[\Sigma'^{-1}]^{\rm lin})$ consisting of functors $G$ which vanish on $\mathcal{I}$, while the latter category is equivalent to ${\rm Mod}(\mathcal{P}_\Sigma)$. Recall from the above the equivalence $\widetilde{\mathcal{P}_\Sigma}\simeq A_\Sigma\mbox{-grproj}$ and from \cite[Proposition 1.3]{BS} the equivalence ${\rm Mod}(\mathcal{P}_\Sigma)\simeq {\rm Mod}(\widetilde{\mathcal{P}_\Sigma})$. Combining these equivalences, we infer that $\Sigma^\perp$ is equivalent to ${\rm Mod} (A_\Sigma\mbox{-grproj})$. Finally, we observe the equivalence  $A_\Sigma\mbox{-Gr}\stackrel{\sim}\longrightarrow {\rm Mod} (A_\Sigma\mbox{-grproj})$. This yields the required equivalence in (2).

Observe that $\Sigma^\perp$ is closed under extensions in $A\mbox{-Gr}$. Then  the two isomorphisms in (3) follow from (2). Recall that a graded algebra $B$ is left graded-hereditary if and only if the functor ${\rm Ext}^1_{B\mbox{-}{\rm Gr}}(M, -)$ is right exact for each graded $B$-module $M$; compare \cite[Lemma A.1]{RV}. Then (4) follows from (3) immediately.
\end{proof}

We view a graded algebra $A$ as a dg algebra with trivial differential, and thus graded $A$-modules as dg $A$-modules with trivial differentials. The following result characterizes certain Hom spaces in the derived category $\mathbf{D}(A)$ of dg $A$-modules; see \cite{KalckYang13} for a more general result.

\begin{lem}\label{lem:hom}
Let $A$ be a graded algebra. Then for any graded $A$-modules $M$ and $N$, there is a natural isomorphism
$${\rm Hom}_{\mathbf{D}(A)} (M, N)\simeq \prod_{i\geq 0} {\rm Ext}^i_{A\mbox{-}{\rm Gr}}(M, N(-i)).$$
\end{lem}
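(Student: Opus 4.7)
The plan is to resolve $M$ by a totalization of a graded projective resolution and unwind the Hom in the derived category, using crucially that $N$ carries zero differential.

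First, choose a graded projective resolution
\[\cdots\to P^{-2}\stackrel{d^{-2}}{\to}P^{-1}\stackrel{d^{-1}}{\to}P^0\to M\to 0\]
in $A\mbox{-}{\rm Gr}$, where each $P^{-i}$ is a direct sum of shifts $A(e)$. Totalize it to the dg $A$-module $\tilde M:=\bigoplus_{i\geq 0}P^{-i}[i]$ (using the dg translation $[i]$, identified with the degree shift $(i)$ via $M(i)\simeq M[i]$ from Subsection 2.1), whose differential on the summand $P^{-i}[i]$ is $d^{-i}$ viewed as a degree one map $P^{-i}[i]\to P^{-(i-1)}[i-1]$. The augmentation $P^0\to M$ extends to a morphism $\tilde M\to M$ of dg $A$-modules.

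I claim $\tilde M$ is a homotopically projective resolution of $M$. The filtration $F_n\tilde M:=\bigoplus_{i\leq n} P^{-i}[i]$ consists of sub-dg-modules with subquotients $P^{-n}[n]$, which are direct sums of shifts $A[m]$; each such shift is homotopically projective by (\ref{equ:3D}), so $\tilde M$ is semi-free and hence homotopically projective. For the quasi-isomorphism $\tilde M\to M$, one exploits that $d_{\tilde M}$ preserves the ``internal'' $A$-grading $j=k+i$ on $(P^{-i})^j\subseteq\tilde M^{k}$, since each $d^{-i}$ is homogeneous in $A\mbox{-}{\rm Gr}$: as a complex of $k$-spaces (not as a dg $A$-module), $\tilde M$ splits over $j\in\mathbb Z$ into the (shifted) exact complex $\cdots\to (P^{-1})^j\to (P^0)^j\to 0$, whose cohomology is $M^j$ concentrated in one degree. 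Summing, $H^\bullet(\tilde M)=M$.

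By (\ref{equ:3}), ${\rm Hom}_{\mathbf{D}(A)}(M,N)\simeq H^0({\rm Hom}_A(\tilde M,N))$. Since $\tilde M=\bigoplus_i P^{-i}[i]$ and $N[k]\simeq N(k)$ (as $d_N=0$),
\[{\rm Hom}_A(\tilde M,N)^n=\prod_{i\geq 0}{\rm Hom}_{A\mbox{-}{\rm Gr}}(P^{-i},N(n-i)).\]
Because $d_N=0$, the differential $f\mapsto\pm f\circ d_{\tilde M}$ sends the $i$-th component to precomposition with $d^{-(i+1)}$, which preserves the index $q:=n-i$. Setting $C_q^p:={\rm Hom}_{A\mbox{-}{\rm Gr}}(P^{-p},N(q))$, one obtains ${\rm Hom}_A(\tilde M,N)\simeq\prod_{q\in\mathbb Z}C_q[-q]$ as dg $k$-modules, where each $C_q^\bullet$ is the standard complex computing ${\rm Ext}^\bullet_{A\mbox{-}{\rm Gr}}(M,N(q))$. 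Taking $H^0$ (products of vector-space complexes commute with cohomology),
\[H^0({\rm Hom}_A(\tilde M,N))\simeq\prod_{q\in\mathbb Z}{\rm Ext}^{-q}_{A\mbox{-}{\rm Gr}}(M,N(q))=\prod_{i\geq 0}{\rm Ext}^{i}_{A\mbox{-}{\rm Gr}}(M,N(-i)),\]
substituting $i=-q$ and noting ${\rm Ext}^{-q}$ vanishes for $q>0$.

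The main technical obstacle is verifying that $\tilde M$ is semi-free (the filtration argument must accommodate a potentially unbounded resolution) and that $\tilde M\to M$ is a quasi-isomorphism; beyond that, the computation is sign-careful bookkeeping on a bicomplex that decouples along the internal grading thanks to $d_N=0$.
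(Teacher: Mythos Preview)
Your proof is correct and follows essentially the same route as the paper: construct a homotopically projective resolution $\tilde M$ of $M$ by totalizing a graded projective resolution, then unwind $H^0({\rm Hom}_A(\tilde M,N))$ using that $d_N=0$ to decouple the Hom-complex into a product of complexes computing the relevant Ext groups. The paper writes the resolution as $T=\bigoplus_{i\geq 0}P^{-i}(i)$ with differential $(-1)^j d^{-i}$ on $P^{-i,j}$, whereas you use $\bigoplus P^{-i}[i]$ with the plain $d^{-i}$; these models are isomorphic via the natural identification $(i)\simeq[i]$ from Subsection~2.1, so the difference is purely cosmetic.
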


\begin{proof}
Take a projective resolution
$$\cdots \rightarrow P^{-2}\stackrel{d^{-2}}\rightarrow P^{-1}\stackrel{d^{-1}}\rightarrow P^0\stackrel{\varepsilon}\rightarrow M\rightarrow 0$$
 in $A\mbox{-Gr}$, where each $P^{-i}=\bigoplus_{j\in \mathbb{Z}} P^{-i, j}$ is a graded projective $A$-module. Denote this resolution by $P$. Then ${\rm Ext}^i_{A\mbox{-}{\rm Gr}}(M, N(-i))\simeq H^i({\rm Hom}_{A\mbox{-}{\rm Gr}} (P, N(-i)))$ for each $i\geq 0$.

 We associate a dg $A$-module $T$ to the projective resolution $P$.  As a graded $A$-module $T=\bigoplus_{i\geq 0} P^{-i}(i)$ and the restriction of the differential $d_T$ on  $P^{-i, j}$ is given by $(-1)^j d^{-i}$, where we set $d^0=0$. There is a quasi-isomorphism $\epsilon\colon T\rightarrow M$ of dg $A$-modules, whose restriction on $P^0$ is $\varepsilon$ and on $P^{-i}$ is zero for each $i\geq 1$.

 Set $T_n=\bigoplus_{i=0}^n P^{-i}(i)$ for each $n\geq 0$. Then
 $$T_0\subseteq T_1\subseteq T_2\subseteq \cdots$$
  is a sequence of dg submodules of $T$ satisfying that $\bigcup_{n\geq 0}T_n=T$ and $T_{n}/{T_{n-1}}\simeq P^{-n}(n)$ as dg $A$-modules. This implies that the dg $A$-module $T$ is homotopically projective. In particular, the quasi-isomorphism $\epsilon \colon T\rightarrow M$ is a homotopically projective resolution; see \cite[8.1.1]{Kel98}. Hence, by (\ref{equ:3}) and (\ref{equ:1}), we have the following isomorphisms:
 $${\rm Hom}_{\mathbf{D}(A)} (M, N)\simeq {\rm Hom}_{\mathbf{K}(A)} (T, N)\simeq H^0({\rm Hom}_A(T, N)).$$ We observe that there is an isomorphism $${\rm Hom}_A(T, N)\simeq \prod_{i\geq 0}{\rm Hom}_{A\mbox{-}{\rm Gr}} (P, N(-i))[i]$$ of dg $k$-modules. Taking the zeroth cohomologies, we obtain the required isomorphism.
\end{proof}

\begin{rem}\label{rem:hom}
We mention that the above isomorphism is compatible with algebra homomorphisms. More precisely, let $\theta\colon A\rightarrow B$ be a homomorphism of graded algebras. Consider the functors of restricting scalars $\theta^*\colon B\mbox{-Gr}\rightarrow A\mbox{-Gr}$ and $\theta^*\colon \mathbf{D}(B)\rightarrow \mathbf{D}(A)$. Let $M$ and $N$ be graded $B$-modules. Then we have the following commutative diagram:
 \[\xymatrix{
 {\rm Hom}_{\mathbf{D}(B)} (M, N) \ar[d]^-{\theta^*} \ar[rr]^-{\phi_{M, N}} && \prod_{i\geq 0} {\rm Ext}^i_{B\mbox{-}{\rm Gr}}(M, N(-i)) \ar[d]^-{\theta^*}\\
 {\rm Hom}_{\mathbf{D}(A)} (\theta^*(M), \theta^*(N))  \ar[rr]^-{\phi_{\theta^*(M), \theta^*(N)}} && \prod_{i\geq 0} {\rm Ext}^i_{A\mbox{-}{\rm Gr}}(\theta^*(M), \theta^*(N)(-i)),
 }\]
 where $\phi_{M, N}$ and $\phi_{\theta^*(M), \theta^*(N)}$ are the isomorphisms in the preceding lemma. The commutativity follows from the  construction of these isomorphisms. \hfill $\square$
\end{rem}

The following result might be deduced from \cite[Theorems 3.1 and 3.6]{KellerYangZhou09}.

\begin{lem}\label{lem:isoH}
Let $A$ be a graded algebra which is left graded-hereditary. Then each dg $A$-module $X$ is isomorphic to its cohomology $H(X)$ in $\mathbf{D}(A)$. Moreover, a dg $A$-module $X$ is perfect if and only if $H(X)$, as a graded $A$-module, is finitely presented.
\end{lem}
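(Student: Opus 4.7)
For the first assertion, the plan is to build an explicit quasi-isomorphism from a homotopically projective resolution of $H(X)$ into $X$. Because $A$ is left graded-hereditary, $H(X)$ admits a graded projective resolution $0 \to P^{-1} \stackrel{\delta}\rightarrow P^0 \stackrel{\epsilon}\rightarrow H(X) \to 0$ of length at most one in $A\mbox{-Gr}$. The associated dg $A$-module $T = P^0 \oplus P^{-1}(1)$ with differential induced from $\delta$ (as in the proof of Lemma~\ref{lem:hom}) is then homotopically projective and satisfies $T \simeq H(X)$ in $\mathbf{D}(A)$. Letting $Z \subseteq X$ denote the graded $A$-submodule of cocycles and $B \subseteq Z$ that of coboundaries (both carrying the zero induced differential), the projectivity of $P^0$ furnishes a lift $f_0 \colon P^0 \to Z \hookrightarrow X$ of $\epsilon$ along $Z \twoheadrightarrow Z/B = H(X)$. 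Since $\epsilon \delta = 0$, the composition $f_0\delta$ lands in $B$, and because $d_X$ defines a surjection $X \twoheadrightarrow B[1]$ of graded $A$-modules while $P^{-1}(1)$ is graded projective, we can lift $f_0\delta$ along $d_X$ (with the appropriate shift) to obtain $f_1 \colon P^{-1}(1) \to X$. The pair $(f_0, f_1)$ assembles into a chain map $f \colon T \to X$, and by construction $H(f)$ is the isomorphism induced by $\epsilon$; hence $f$ is a quasi-isomorphism and $X \simeq T \simeq H(X)$ in $\mathbf{D}(A)$.

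For the second assertion, by the first part we have $X \simeq H(X)$ in $\mathbf{D}(A)$, so $X$ is perfect (equivalently, compact in $\mathbf{D}(A)$) if and only if the graded $A$-module $M = H(X)$ is. For any graded $A$-module $N$, Lemma~\ref{lem:hom} together with the graded-hereditary hypothesis (which kills ${\rm Ext}^i_{A\mbox{-}{\rm Gr}}$ for $i \geq 2$) yields a natural decomposition
\[
{\rm Hom}_{\mathbf{D}(A)}(M, N) \simeq {\rm Hom}_{A\mbox{-}{\rm Gr}}(M, N) \oplus {\rm Ext}^1_{A\mbox{-}{\rm Gr}}(M, N(-1)).
\]
Since every dg $A$-module is isomorphic to its cohomology by the first part, and coproducts in $\mathbf{D}(A)$ commute with taking cohomology, testing compactness of $M$ against arbitrary coproducts reduces to testing against coproducts of graded $A$-modules. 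This is equivalent to requiring that both functors ${\rm Hom}_{A\mbox{-}{\rm Gr}}(M, -)$ and ${\rm Ext}^1_{A\mbox{-}{\rm Gr}}(M, -)$ commute with coproducts of graded $A$-modules. By standard arguments (and using once more that $A$ is graded-hereditary, so that a first syzygy is graded projective), the combination of these two conditions amounts precisely to $M$ being finitely generated with a finitely generated first syzygy, that is, to $M$ being finitely presented.

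The main obstacle is the sign and degree-shift bookkeeping in the first part: the existence of the lifts $f_0$ and $f_1$ is immediate from projectivity, but verifying that they can be chosen so that $d_X \circ f = f \circ d_T$ holds on the nose rather than merely up to homotopy requires some care with the sign conventions built into the differential of $T$. Once this is done, the identification of $H(f)$ as the map induced by $\epsilon$ is direct from the construction, and the second assertion then follows formally from the first together with Lemma~\ref{lem:hom}.
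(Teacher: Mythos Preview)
Your proof is correct, but both halves proceed differently from the paper.

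For the first assertion, the paper does \emph{not} build a map from a projective resolution of $H(X)$ into $X$. Instead it works with the natural short exact sequences $0\to Z\to X\to C[1]\to 0$ and $0\to C[1]\to Z[1]\to H(X)[1]\to 0$ in $A\mbox{-Gr}$; the graded-hereditary hypothesis (via vanishing of ${\rm Ext}^2$) lets one complete these to a $3\times 3$ diagram with a new extension $0\to Z\to E\to Z[1]\to 0$, and then the mapping cone of the resulting map $X[-1]\to E[-1]$ (of dg modules with trivial differential) admits explicit quasi-isomorphisms to both $X$ and $H(X)$. Your approach is more direct and perhaps more conceptual (it literally exhibits the homotopically projective resolution of $H(X)$ mapping into $X$), but, as you note, it forces you through the sign bookkeeping of the twisted action on $[1]$ versus the untwisted action on $(1)$; the paper's zigzag sidesteps this entirely, and also avoids ever choosing a projective resolution of $H(X)$.

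For the second assertion, the paper again argues more structurally: for ``only if'' it simply observes that the class of $X$ with $H(X)$ finitely presented is a thick subcategory containing $A$, hence contains ${\rm perf}(A)$; for ``if'' it takes a two-term projective resolution of $H(X)$ with finitely generated terms and reads off a triangle in $\mathbf{D}(A)$ exhibiting $H(X)\simeq X$ as perfect. Your route through Lemma~\ref{lem:hom} and the characterisation of compactness by ${\rm Hom}$ and ${\rm Ext}^1$ commuting with coproducts is a valid alternative; it is a bit heavier (one has to justify that testing against graded modules suffices, and invoke the ``finitely presented $\Leftrightarrow$ ${\rm Hom}$ and ${\rm Ext}^1$ preserve coproducts'' fact), whereas the paper's argument is essentially two sentences.
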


\begin{proof}
Denote by $Z$ and $C$ the cocycle and coboundary of $X$; they are graded $A$-modules, or equivalently, dg $A$-modules with trivial differentials. Consider the natural exact sequences $0\rightarrow Z\stackrel{\rm inc}\rightarrow X \stackrel{d}\rightarrow C[1]\rightarrow 0$ and $0\rightarrow C[1]\stackrel{\rm inc}\rightarrow Z[1]\stackrel{\pi}\rightarrow H(X)[1]\rightarrow 0$ in $A\mbox{-Gr}$. Since $A$ is left graded-hereditary, there exists a graded $A$-module $E$ with the following commutative diagram with exact rows and columns in $A\mbox{-Gr}$.
\[\xymatrix{ & & 0\ar[d] & 0\ar[d]\\
0\ar[r] & Z \ar@{=}[d] \ar[r]^-{{\rm inc}} & X\ar[d]^-{f} \ar[r]^-{d} & C[1]\ar[d]^{\rm inc} \ar[r] & 0\\
0\ar[r] & Z \ar[r]^{f\circ {\rm inc}} & E \ar[r]^-{g} \ar[d]^-{\pi\circ g} & Z[1] \ar[r] \ar[d]^-{\pi} & 0\\
& & H(X)[1]\ar[d] \ar@{=}[r] & H(X)[1] \ar[d]\\
& & 0  &0}
\]
Consider the morphism $f[-1]\colon X[-1]\rightarrow E[-1]$ in $A\mbox{-Gr}$ as a morphism of dg $A$-modules, where $X[-1]$ is viewed as a graded $A$-module, that is, a dg module with trivial differential. Recall the mapping cone ${\rm Con}(f[-1])=E[-1]\oplus X$ in Subsection 2.1.  We observe two quasi-isomorphisms $({\rm inc}\circ (g[-1]), {\rm Id}_X)\colon {\rm Con}(f[-1]) \rightarrow X$ and  $((\pi\circ g)[-1], 0)\colon {\rm Con}(f[-1])\rightarrow H(X)$.  This proves that $X$ is isomorphic to $H(X)$ in $\mathbf{D}(A)$.

Recall that ${\rm perf}(A)={\rm thick}\langle A\rangle$ and that $H\colon \mathbf{D}(A)\rightarrow A\mbox{-Gr}$ is a cohomological functor. We observe that $\{X\in \mathbf{D}(A)\; |\; H(X)\in A\mbox{-Gr} \mbox{ is finitely presented}\}$ is a thick subcategory of $\mathbf{D}(A)$ that contains $A$. Then the ``only if" part of the second statement follows. For the ``if" part, assume that $H(X)$ is finitely presented. Then we have an exact sequence $0\rightarrow P_1\rightarrow P_0 \rightarrow H(X)\rightarrow 0$ of graded $A$-modules with $P_i$ finitely generated projective. This sequence gives rise to a triangle in $\mathbf{D}(A)$.  Since each $P_i$ is perfect, it follows that $H(X)$ and thus $X$ are perfect.
\end{proof}

For a class $\mathcal{S}$ of objects in a triangulated category $\mathcal{T}$, consider the \emph{right perpendicular subcategory} $\mathcal{S}^\perp=\{X\in \mathcal{T}\; |\; {\rm Hom}_\mathcal{T}(S, X[n])=0 \mbox{ for all } S\in \mathcal{S}, n\in \mathbb{Z}\}$; it is a thick subcategory of $\mathcal{T}$.

The following is the main result of this section, the ungraded version of which is known, for example, by combining \cite[Section 1.8]{AngeleriKoenigLiu11} and \cite[Theorem 6.1]{KrauseStovicek10}.

\begin{prop}\label{prop:perpD}
Let $A$ be a graded algebra which is left graded-hereditary. Let $\Sigma$ be a family of morphisms between finitely generated graded projective $A$-modules. Denote by $\theta_\Sigma\colon A\rightarrow A_\Sigma$ the corresponding graded universal localization. Consider the right perpendicular subcategory $\{{\rm Ker}\;\xi, {\rm Cok}\; \xi\; |\; \xi \in \Sigma\}^\perp$ in $\mathbf{D}(A)$.  Then we have a triangle equivalence $$\{{\rm Ker}\;\xi, {\rm Cok}\; \xi\; |\; \xi \in \Sigma\}^\perp \stackrel{\sim}\longrightarrow \mathbf{D}(A_\Sigma).$$
\end{prop}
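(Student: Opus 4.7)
The plan is to take the restriction-of-scalars functor $\theta_\Sigma^* \colon \mathbf{D}(A_\Sigma) \rightarrow \mathbf{D}(A)$ as the candidate, show it factors through $\{\mathrm{Ker}\,\xi, \mathrm{Cok}\,\xi \mid \xi \in \Sigma\}^\perp$ and induces a triangle equivalence onto it; the claimed equivalence in the statement is then a quasi-inverse. The crucial simplification is Lemma~\ref{lem:isoH}, which applies to both $A$ and $A_\Sigma$ (the latter being left graded-hereditary by Proposition~\ref{prop:gradeduniloc}(4)): every object of $\mathbf{D}(A)$ or $\mathbf{D}(A_\Sigma)$ is isomorphic to its cohomology, so one may work throughout with graded modules.

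The heart of the argument is to check that, for a graded $A$-module $X$ viewed as a dg $A$-module, membership of $X$ in $\{\mathrm{Ker}\,\xi, \mathrm{Cok}\,\xi\}^\perp$ is equivalent to $X \in \Sigma^\perp$ in the sense of Section~3. Fix $\xi \colon P \rightarrow Q$ in $\Sigma$ and write $K = \mathrm{Ker}\,\xi$, $I = \mathrm{Im}\,\xi$, $C = \mathrm{Cok}\,\xi$. By Lemma~\ref{lem:hom}, together with $X[n] \simeq X(n)$ for graded $X$, the $\mathbf{D}(A)$-Hom vanishing translates into the vanishing of $\mathrm{Ext}^i_{A\mbox{-}{\rm Gr}}(K, X(d))$ and $\mathrm{Ext}^i_{A\mbox{-}{\rm Gr}}(C, X(d))$ for all $d \in \mathbb{Z}$ and $i \geq 0$. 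Since $A$ is left graded-hereditary, $K$ and $I$ are graded projective as graded submodules of the graded projectives $P$ and $Q$, so only the three terms $\mathrm{Hom}(K, X(d))$, $\mathrm{Hom}(C, X(d))$ and $\mathrm{Ext}^1(C, X(d))$ can be nonzero. A diagram chase on the two short exact sequences $0 \rightarrow K \rightarrow P \rightarrow I \rightarrow 0$ and $0 \rightarrow I \rightarrow Q \rightarrow C \rightarrow 0$ then converts the joint vanishing of these three terms into the statement that $\mathrm{Hom}_{A\mbox{-}{\rm Gr}}(\xi, X(d))$ is an isomorphism for every $d$, i.e., $X \in \Sigma^\perp$.

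Granted this, essential surjectivity follows from Proposition~\ref{prop:gradeduniloc}(2): any $Z \in \{\mathrm{Ker}\,\xi, \mathrm{Cok}\,\xi\}^\perp$ is isomorphic to the graded module $H(Z)$, which lies in $\Sigma^\perp$ and is therefore of the form $\theta_\Sigma^*(Y)$ for some graded $A_\Sigma$-module $Y$. For full faithfulness, I again reduce via Lemma~\ref{lem:isoH} to graded $A_\Sigma$-modules $M, N$; Lemma~\ref{lem:hom} applied in both $\mathbf{D}(A_\Sigma)$ and $\mathbf{D}(A)$ expresses ${\rm Hom}_{\mathbf{D}(A_\Sigma)}(M, N[n])$ and ${\rm Hom}_{\mathbf{D}(A)}(\theta_\Sigma^* M, \theta_\Sigma^* N[n])$ as products of identical $\mathrm{Hom}$ and $\mathrm{Ext}^1$ terms thanks to Proposition~\ref{prop:gradeduniloc}(3)-(4), and the comparison diagram in Remark~\ref{rem:hom} shows that the identification is the natural map induced by $\theta_\Sigma^*$.

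The main obstacle will be the diagram chase in the second paragraph, in particular the step deducing $\mathrm{Hom}(K, X(d)) = 0$ from the $\Sigma^\perp$ condition: a priori the long exact sequence associated to $0 \rightarrow K \rightarrow P \rightarrow I \rightarrow 0$ only yields $\mathrm{Hom}(K, X(d)) \simeq \mathrm{Ext}^1(I, X(d))$, so one has to invoke graded-projectivity of $I$ (a consequence of the graded-hereditary hypothesis) to conclude. The remainder is essentially bookkeeping, combining Lemmas~\ref{lem:hom} and \ref{lem:isoH} with Proposition~\ref{prop:gradeduniloc}.
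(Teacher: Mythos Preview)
Your proposal is correct and follows essentially the same route as the paper's proof: use $\theta_\Sigma^*$ as the functor, reduce to graded modules via Lemma~\ref{lem:isoH} (on both sides, using Proposition~\ref{prop:gradeduniloc}(4)), identify the perpendicular subcategory with $\{X : H(X)\in\Sigma^\perp\}$ via Lemma~\ref{lem:hom}, and then obtain full faithfulness and essential surjectivity from Proposition~\ref{prop:gradeduniloc}(2)--(3) together with Remark~\ref{rem:hom}. The only stylistic difference is that the paper, instead of a diagram chase on the two short exact sequences, observes directly that since $I=\mathrm{Im}\,\xi$ is graded projective the sequence $0\to K\to P\to I\to 0$ splits, so $\xi$ decomposes as $(0,\xi')$ with $\xi'$ a monomorphism; this makes the equivalence between the three vanishing conditions and the invertibility of $\mathrm{Hom}(\xi,X(d))$ immediate. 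Your chase reaches the same conclusion, though your parenthetical ``only yields $\mathrm{Hom}(K,X(d))\simeq\mathrm{Ext}^1(I,X(d))$'' is not literally what the long exact sequence gives---it gives surjectivity of $\mathrm{Hom}(P,X)\to\mathrm{Hom}(K,X)$ once $\mathrm{Ext}^1(I,X)=0$, and the vanishing of $\mathrm{Hom}(K,X)$ then needs the additional input that $\mathrm{Hom}(I,X)\to\mathrm{Hom}(P,X)$ is surjective, which comes from surjectivity of $\mathrm{Hom}(\xi,X)$.
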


\begin{proof}
Write $\mathcal{N}=\{{\rm Ker}\;\xi, {\rm Cok}\; \xi\; |\; \xi \in \Sigma\}^\perp$. For a morphism $\xi \colon P\rightarrow Q$ in $\Sigma$, we observe that ${\rm Im}\; \xi$ is projective and then ${\rm Ker}\; \xi$  is a direct summand of $P$; moreover, there is a decomposition $P={\rm Ker}\; \xi\oplus P'$ such that $\xi=(0, \xi')$ for some monomorphism $\xi'\colon P'\rightarrow Q$ with ${\rm Cok}\; \xi={\rm Cok}\; \xi'$. In particular, for a graded $A$-module $M$, ${\rm Hom}_{A\mbox{-}{\rm Gr}}(\xi, M)$ is an isomorphism if and only if ${\rm Hom}_{A\mbox{-}{\rm Gr}} ({\rm Ker}\; \xi, M)=0={\rm Hom}_{A\mbox{-}{\rm Gr}}({\rm Cok}\; \xi, M)={\rm Ext}^1_{A\mbox{-}{\rm Gr}}({\rm Cok}\; \xi, M)$.

Take $X\in \mathbf{D}(A)$. Recall from Lemma \ref{lem:isoH} that $X\simeq H(X)$. Then $X$ lies in $\mathcal{N}$ if and only if ${\rm Hom}_{\mathbf{D}(A)} ({\rm Ker}\; \xi, H(X)[n])=0={\rm Hom}_{\mathbf{D}(A)}({\rm Cok}\; \xi, H(X)[n])$ for all $\xi\in \Sigma$ and $n\in \mathbb{Z}$. By Lemma \ref{lem:hom} this is equivalent to ${\rm Hom}_{A\mbox{-}{\rm Gr}} ({\rm Ker}\; \xi, H(X)(n))=0={\rm Hom}_{A\mbox{-}{\rm Gr}}({\rm Cok}\; \xi, H(X)(n))={\rm Ext}^1_{A\mbox{-}{\rm Gr}}({\rm Cok}\; \xi, H(X)(n))$. By above,  these equalities are equivalent to ${\rm Hom}_{A\mbox{-}{\rm Gr}}(\xi, H(X)(n))$ are isomorphisms for all $n\in \mathbb{Z}$. In summary, $X$ lies in $\mathcal{N}$ if and only if $H(X)$, viewed as an object in $A\mbox{-Gr}$, lies in $\Sigma^\perp$, or equivalently, $\mathcal{N}=\{X\in \mathbf{D}(A)\; |\; H(X) \in \Sigma^\perp\}$.

Consider the graded universal localization $\theta_\Sigma\colon A\rightarrow A_\Sigma$ and the functor of restricting scalars $\theta_\Sigma^*\colon \mathbf{D}(A_\Sigma)\rightarrow \mathbf{D}(A)$. For any graded $A_\Sigma$-modules $M, N$, $\theta_\Sigma^*$ induces an isomorphism ${\rm Hom}_{\mathbf{D}(A_\Sigma)}(M, N)\simeq {\rm Hom}_{\mathbf{D}(A)}(\theta_\Sigma^*(M), \theta_\Sigma^*(N))$. This follows from Lemma \ref{lem:hom}, Remark \ref{rem:hom} and Proposition \ref{prop:gradeduniloc}(3). By Proposition \ref{prop:gradeduniloc}(4) $A_\Sigma$ is left graded-hereditary and then by Lemma \ref{lem:isoH} each object in $\mathbf{D}(A_\Sigma)$ is isomorphic to a graded $A_\Sigma$-module. Then we infer that $\theta_\Sigma^*\colon \mathbf{D}(A_\Sigma)\rightarrow \mathbf{D}(A)$ is fully faithful; moreover,  it follows from Proposition \ref{prop:gradeduniloc}(2) that its essential image  is $\{X\in \mathbf{D}(A)\; |\; H(X) \in \Sigma^\perp\}$, which equals $\mathcal{N}$. Then we have the required equivalence.
\end{proof}

\section{Leavitt path algebras}

In this section, we recall some results on Leavitt path algebras \cite{AA05, AMP}, which we will use in the proof of Main Theorem.

Let $Q$ be a finite quiver. Denote by $\bar{Q}$ its double quiver, which is obtained from $Q$ by adding for each $\alpha$ in $Q_1$ an arrow $\alpha^*$ going in the opposite direction; the added arrows are called \emph{ghost arrows}. Denote by $k\bar{Q}$ the path algebra of the double quiver $\bar{Q}$.

 The \emph{Leavitt path algebra} $L(Q)$ of $Q$ is by definition the quotient algebra of $k\bar{Q}$ modulo the ideal generated by $\{\alpha \beta^*-\delta_{\alpha, \beta} e_{t(\alpha)}, \sum_{\{\alpha\in Q_1\; |\; s(\alpha)=i\}} \alpha^*\alpha-e_i\; |\; \alpha, \beta\in Q_1, \; i \in Q_0 \mbox{ that are not sinks}\}$. Here, $\delta$ is the Kronecker symbol. The generators of the ideal are known as  the \emph{Cuntz--Krieger relations}.

The Leavitt path algebra $L(Q)=\bigoplus_{n\in \mathbb{Z}} L(Q)^n$ is naturally $\mathbb{Z}$-graded by means of ${\rm deg}\; e_i=0$, ${\rm deg}\; \alpha=1$  and ${\rm deg}\; \alpha^*=-1$ for all $i\in Q_0$ and $\alpha\in Q_1$. There is a homomorphism of graded algebras $ \iota_Q\colon kQ\rightarrow L(Q)$ such that $\iota_Q(e_i)=e_i$ and $\iota_Q(\alpha)=\alpha$. It is well known that the homomorphism $\iota_Q$ is injective; for example, see \cite[Proposition 4.1]{Ch12}.

There is an algebra anti-automorphism $(-)^*\colon L(Q)\rightarrow L(Q)$, called the \emph{involution}, such that $(e_i)^*=e_i$, $(\alpha)^*=\alpha^*$ and $(\alpha^*)^*=\alpha$. Note that $(-)^*$ identifies $L(Q)^n$ with $L(Q)^{-n}$.

 Recall that a graded algebra $A=\bigoplus_{n\in \mathbb{Z}} A^n$ is \emph{strongly graded} provided that $A^n A^m=A^{n+m}$ for all $n, m\in \mathbb{Z}$, which is equivalent to the conditions that $A^1 A^{-1}=A^0=A^{-1} A^1$. In this case, each $A^0$-bimodule $A^n$ is invertible.

 An algebra $\Lambda$ is \emph{von Neumann regular} provided that all $\Lambda$-modules are flat, or equivalently, any finitely presented $\Lambda$-module is projective. Then the category $\Lambda\mbox{-proj}$ of finitely generated projective $\Lambda$-modules is a semisimple abelian category. Here, we recall that an abelian category is \emph{semisimple} if any short exact sequence splits.

 For a semisimple abelian category $\mathcal{A}$ and an auto-equivalence $\Sigma$ on it, we denote by $(\mathcal{A}, \Sigma)$ the unique triangulated structure on $\mathcal{A}$ with the translation functor $\Sigma$; see \cite[Lemma 3.4]{Ch11}. Indeed, all triangles in this category are direct sums of trivial ones. For example, given an invertible $\Lambda$-bimodule $K$ on a von Neumann regular algebra $\Lambda$, we consider the auto-equivalence $K\otimes_\Lambda-$ on $\Lambda\mbox{-proj}$, and thus we have a triangulated category $(\Lambda\mbox{-proj}, K\otimes_\Lambda-)$.

The following result summarizes the structure of $L(Q)=\bigoplus_{n\in \mathbb{Z}} L(Q)^n$ as a graded algebra. We denote by $L(Q)\mbox{-}{\rm grproj}$ the full subcategory of $L(Q)\mbox{-Gr}$ consisting of finitely generated graded projective $L(Q)$-modules, and recall that $(1)$ denotes the degree-shift functor of $L(Q)$-modules.

\begin{lem}\label{lem:Leavitt}
Let $Q$ be a finite quiver without sinks. Then the following statements hold:
\begin{enumerate}
\item the Leavitt path algebra $L(Q)$ is strongly graded; in particular, we have an equivalence $L(Q)\mbox{-}{\rm grproj}\stackrel{\sim}\longrightarrow L(Q)^0\mbox{-}{\rm proj}$;
\item the zeroth component subalgebra $L(Q)^0$ is von Neumann regular and hereditary;
\item there are triangle equivalences
$${\rm perf}(L(Q))\stackrel{\sim}\longrightarrow (L(Q)\mbox{-}{\rm grproj}, (1)) \stackrel{\sim}\longrightarrow (L(Q)^0\mbox{-}{\rm proj}, L(Q)^1\otimes_{L(Q)^0}-);$$
\item there is a triangle equivalence
$$(L(Q)^{\rm op}\mbox{-}{\rm grproj}, (1))\stackrel{\sim}\longrightarrow (L(Q)\mbox{-}{\rm grproj}, (-1)).$$
\end{enumerate}
\end{lem}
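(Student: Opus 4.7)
The plan is to treat the four parts of Lemma~\ref{lem:Leavitt} in order, with (1) and (2) being the structural backbone, (3) arising by combining them with Lemma~\ref{lem:isoH}, and (4) coming from the involution on $L(Q)$.

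For (1), I would argue directly from the Cuntz--Krieger relations. Under the no-sinks hypothesis, the relation $\sum_{s(\alpha)=i}\alpha^{*}\alpha=e_{i}$ is valid at every vertex $i$, exhibiting each $e_{i}$ as an element of $L(Q)^{-1}L(Q)^{1}$; analogously, the relation $\alpha\alpha^{*}=e_{t(\alpha)}$ places $e_{j}\in L(Q)^{1}L(Q)^{-1}$ whenever $j$ is the target of some arrow. Together these give $L(Q)^{1}L(Q)^{-1}=L(Q)^{0}=L(Q)^{-1}L(Q)^{1}$, so $L(Q)$ is strongly graded. Dade's theorem then supplies the in-particular equivalence: the functor $M\mapsto M^{0}$ is an equivalence $L(Q)\mbox{-Gr}\stackrel{\sim}\longrightarrow L(Q)^{0}\mbox{-Mod}$, with quasi-inverse $N\mapsto L(Q)\otimes_{L(Q)^{0}}N$, and this restricts to the finitely generated projectives.

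For (2), I would invoke the known description of $L(Q)^{0}$ from \cite{AMP} as an ultramatricial $k$-algebra, i.e.\ a directed colimit of finite products of matrix algebras over $k$. Both von Neumann regularity and hereditarity can be tested on finitely presented modules and are preserved under directed colimits, and both hold trivially for semisimple algebras; hence $L(Q)^{0}$ inherits them.

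For (3), I first observe that Dade's equivalence transports the hereditarity of $L(Q)^{0}$ to graded-hereditarity of $L(Q)$. Lemma~\ref{lem:isoH} then identifies $\mathrm{perf}(L(Q))$ with those dg $L(Q)$-modules whose cohomology is a finitely presented graded $L(Q)$-module, and each such dg module is already isomorphic in $\mathbf{D}(L(Q))$ to its cohomology. Since $L(Q)^{0}$ is von Neumann regular, finitely presented graded $L(Q)$-modules coincide with $L(Q)\mbox{-grproj}$, and together with the natural isomorphism $M[1]\simeq M(1)$ of Subsection~2.1 this yields the first equivalence $\mathrm{perf}(L(Q))\simeq(L(Q)\mbox{-grproj},(1))$. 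For the second, I apply Dade once more: the functor $M\mapsto M^{0}$ sends $M(1)$ to $M^{1}=L(Q)^{1}\otimes_{L(Q)^{0}}M^{0}$, so the degree shift $(1)$ on $L(Q)\mbox{-grproj}$ corresponds to the auto-equivalence $L(Q)^{1}\otimes_{L(Q)^{0}}-$ on $L(Q)^{0}\mbox{-proj}$.

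For (4), I would exploit the involution $(-)^{*}\colon L(Q)\to L(Q)$, which is an algebra anti-automorphism satisfying $(L(Q)^{n})^{*}=L(Q)^{-n}$. Reinterpreted, it is a graded algebra isomorphism $L(Q)\stackrel{\sim}\longrightarrow L(Q)^{\mathrm{op}}$ that \emph{reverses} the grading. Pulling back graded modules along this isomorphism yields an equivalence $L(Q)^{\mathrm{op}}\mbox{-Gr}\stackrel{\sim}\longrightarrow L(Q)\mbox{-Gr}$ that intertwines the degree shift $(1)$ on the source with the shift $(-1)$ on the target; restricting to finitely generated graded projectives gives the desired triangle equivalence. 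The step I would expect to take the most care is verifying that this equivalence indeed respects the triangulated structure coming from (3) via the semisimplicity of $L(Q)^{0}\mbox{-proj}$, which ultimately reduces to a bookkeeping check on the anti-automorphism formulas and the identification of degree shifts with translations.
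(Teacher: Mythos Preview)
Your overall strategy matches the paper's: it too cites Dade for (1), the ultramatricial description of $L(Q)^0$ from \cite{AMP} for (2), defers (3) to Lemma~\ref{lem:perfder} (whose proof via Lemmas~\ref{lem:isoH} and \ref{lem:hom} you essentially reproduce inline), and obtains (4) from the involution exactly as you do. Parts (2)--(4) are fine.

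There is, however, a genuine gap in your direct argument for (1). The CK2 relation $\sum_{s(\alpha)=i}\alpha^{*}\alpha=e_{i}$ indeed yields $e_{i}\in L(Q)^{-1}L(Q)^{1}$ at every vertex, so $L(Q)^{-1}L(Q)^{1}=L(Q)^{0}$. But CK1 only gives $e_{j}\in L(Q)^{1}L(Q)^{-1}$ when $j$ is the \emph{target} of some arrow, and the hypothesis ``no sinks'' does not exclude \emph{sources}. For a source $j$ you have said nothing, and the missing inclusion is not automatic: in a general $\mathbb{Z}$-graded ring the condition $A^{-1}A^{1}=A^{0}$ does \emph{not} imply $A^{1}A^{-1}=A^{0}$. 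For instance, take $A=k\langle x,y\rangle/(yx-1)$ with $\deg x=1$, $\deg y=-1$; then $1=yx\in A^{-1}A^{1}$, but $A^{1}A^{-1}$ is spanned by $\{x^{c}y^{c}:c\geq 1\}$ and does not contain $1$. For Leavitt path algebras without sinks the equality $L(Q)^{1}L(Q)^{-1}=L(Q)^{0}$ does hold, but establishing it at a source requires a further argument (ultimately using that every vertex admits a path to a cycle); the paper simply cites \cite[Theorem~3.5]{Haz} or \cite[5.3]{Sm} for this, and you should either do the same or supply the missing case explicitly.
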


\begin{proof}
For (1), we recall from \cite[5.3]{Sm} or \cite[Theorem 3.5]{Haz} that $L(Q)$ is strongly graded. Then by \cite[Theorem 2.8]{Dade} the functor  $(-)^0\colon L(Q)\mbox{-Gr}\rightarrow L(Q)^0\mbox{-Mod}$ sending a graded module $M=\bigoplus_{n\in \mathbb{Z}} M^n$ to its zeroth component $M^0$ is an equivalence; moreover, via this equivalence the degree-shift functor $(1)$ corresponds to the functor $L(Q)^1\otimes_{L(Q)^0}-$. The required equivalence follows directly.

By the proof of  \cite[Theorem 5.3]{AMP} or \cite[5.3]{Sm}, the algebra $L(Q)^0$ is a direct limit of finite-dimensional semisimple algebras, and thus it is von Neumann regular and hereditary; this yields (2). The statement (3) is a special case of Lemma \ref{lem:perfder}.

The equivalence in (4) follows from the involution $(-)^*\colon L(Q)\rightarrow L(Q)$ , which induces an equivalence $L(Q)^{\rm op}\mbox{-}{\rm grproj}\stackrel{\sim}\longrightarrow L(Q) \mbox{-}{\rm grproj}$. This equivalence identifies $(1)$ on $L(Q)^{\rm op}\mbox{-}{\rm grproj}$ with $(-1)$ on $L(Q)\mbox{-}{\rm grproj}$.
\end{proof}

\begin{lem}\label{lem:perfder}
Let $A=\bigoplus_{n\in \mathbb{Z}} A^n$ be a strongly graded algebra such that $A^0$ is von Neumann regular and left hereditary.
Then we have a triangle equivalence
$${\rm perf}(A)\stackrel{\sim}\longrightarrow (A\mbox{-}{\rm grproj}, (1)) \stackrel{\sim}\longrightarrow (A^0\mbox{-}{\rm proj}, A^1\otimes_{A^0}-).$$
\end{lem}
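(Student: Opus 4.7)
The plan is to prove the two triangle equivalences separately. The second is essentially Dade's theorem for strongly graded rings, while the first reduces via Lemma~\ref{lem:isoH} to exhibiting each perfect dg $A$-module as a finitely generated graded projective module viewed as a complex with trivial differential.

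For the second equivalence I would invoke \cite[Theorem 2.8]{Dade}: since $A$ is strongly graded, the zeroth-component functor $(-)^0\colon A\mbox{-Gr}\to A^0\mbox{-Mod}$ is an equivalence with quasi-inverse $A\otimes_{A^0}-$, and it intertwines the degree-shift $(1)$ with $A^1\otimes_{A^0}-$. Restricting to finitely generated projectives yields an additive equivalence $A\mbox{-grproj}\simeq A^0\mbox{-proj}$. Since $A^0$ is von Neumann regular, every finitely presented $A^0$-module is projective, so morphisms in $A^0\mbox{-proj}$ have projective kernel, image and cokernel; hence $A^0\mbox{-proj}$ is a semisimple abelian category. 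By \cite[Lemma 3.4]{Ch11} its triangulated structure with translation $A^1\otimes_{A^0}-$ is uniquely determined, so the additive equivalence above automatically promotes to a triangle equivalence.

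For the first equivalence the first step is to observe that $A$ is left graded-hereditary: the Dade equivalence $A\mbox{-Gr}\simeq A^0\mbox{-Mod}$ transports the left hereditary hypothesis on $A^0$ to the vanishing of $\mathrm{Ext}^2_{A\mbox{-}{\rm Gr}}$. Lemma~\ref{lem:isoH} then yields simultaneously that any $X\in\mathbf{D}(A)$ is isomorphic in $\mathbf{D}(A)$ to its cohomology $H(X)$ viewed as a graded $A$-module with trivial differential, and that $X\in\mathrm{perf}(A)$ iff $H(X)$ is finitely presented in $A\mbox{-Gr}$. By Dade and von Neumann regularity of $A^0$, the finitely presented graded $A$-modules are precisely the objects of $A\mbox{-grproj}$. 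Consequently the inclusion $F\colon (A\mbox{-grproj},(1))\to\mathrm{perf}(A)$ sending $M$ to $M$ viewed as a dg module with trivial differential — with translations identified via the natural isomorphism $M(1)\stackrel{\sim}\to M[1]$ from Subsection~2.1 — is essentially surjective. Full faithfulness would follow from Lemma~\ref{lem:hom}: for $M,N\in A\mbox{-grproj}$,
$$\mathrm{Hom}_{\mathbf{D}(A)}(M,N[n])\simeq \prod_{i\geq 0}\mathrm{Ext}^i_{A\mbox{-}{\rm Gr}}(M,N(n-i))\simeq \mathrm{Hom}_{A\mbox{-}{\rm Gr}}(M,N(n)),$$
where the $\mathrm{Ext}^i$-terms vanish for $i\geq 1$ since $M$ is graded projective, and the right hand side matches $\mathrm{Hom}_{(A\mbox{-grproj},(1))}(M,N[n])$. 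Finally \cite[Lemma 3.4]{Ch11} promotes $F$ from an additive to a triangle equivalence, because $(A\mbox{-grproj},(1))$ carries the unique triangulated structure on a semisimple abelian category with the prescribed translation.

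The main technical subtlety I anticipate is keeping track of the sign discrepancy between the degree-shift $(1)$ and the dg translation $[1]$, which is handled by the natural isomorphism $M(n)\stackrel{\sim}\to M[n]$ recorded in Subsection~2.1; once this is in place the proof is a matter of stitching together Dade's theorem, Lemma~\ref{lem:isoH}, Lemma~\ref{lem:hom}, and the uniqueness of the triangulated structure on a semisimple abelian category.
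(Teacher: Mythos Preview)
Your proposal is correct and follows essentially the same approach as the paper: Dade's theorem for the second equivalence, then Lemma~\ref{lem:isoH} and Lemma~\ref{lem:hom} for the first, with the uniqueness of the triangulated structure on a semisimple abelian category handling the compatibility with triangles. The only cosmetic difference is that the paper runs the first equivalence in the other direction via the cohomology functor $H\colon \mathrm{perf}(A)\to A\mbox{-grproj}$ rather than your inclusion $F$, and verifies the translation compatibility directly rather than appealing to \cite[Lemma 3.4]{Ch11}.
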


By the equivalence $A\mbox{-grproj}\stackrel{\sim}\longrightarrow A^0\mbox{-proj}$, the category  $A\mbox{-grproj}$ is semisimple abelian. Thus the triangulated category $(A\mbox{-}{\rm grproj}, (1))$ is defined, and it is equivalent to $(A^0\mbox{-proj}, A^1\otimes_{A^0}-)$. Here, we use implicitly that for a graded $A$-module $M$, there is an isomorphism $M^1\simeq A^1\otimes_{A^0} M^0$ of $A^0$-modules.

\begin{proof}
We infer from the assumption and the equivalence $A\mbox{-Gr} \stackrel{\sim}\longrightarrow A^0\mbox{-Mod}$ that  $A$ is left graded-hereditary and every finitely presented graded $A$-module is projective. We apply Lemma \ref{lem:isoH} to get  ${\rm perf}(A)=\{X\simeq H(X)\in \mathbf{D}(A)\; |\; H(X)\in A\mbox{-}{\rm grproj}\}$, and thus a well-defined functor $H\colon {\rm perf}(A)\rightarrow A\mbox{-}{\rm grproj}$. This functor is  an equivalence by Lemma \ref{lem:hom}. Moreover, it follows from the definition of $H$ that the translation functor $[1]$ on ${\rm perf}(A)$ corresponds to the degree-shift functor $(1)$ on  $A\mbox{-}{\rm grproj}$, and hence $H$ is a triangle equivalence.
\end{proof}

Recall the opposite quiver $Q^{\rm op}$ of $Q$ and the natural identification
 $kQ^{\rm op}=(kQ)^{\rm op}$. Then we have a homomorphism of graded algebras
$$\kappa_Q \colon kQ=(kQ^{\rm op})^{\rm op} \stackrel{(\iota_{Q^{\rm op}})^{\rm op}} \longrightarrow L(Q^{\rm op})^{\rm op}.$$

For a vertex $j$ in $Q$, $kQe_j$ denotes the left homogeneous ideal of $kQ$ generated by $e_j$, where $e_j$ is homogeneous of degree 0. Observe that $kQe_j$ has a basis given by all paths starting at $j$.  We recall the homomorphisms  $\eta^Q_i$ and $\xi^Q_i$ in the sequences (\ref{equ:4}) and (\ref{equ:5}) for left graded $kQ$-modules. More precisely,  if  $i\in Q_0$ is not a sink, there is an exact sequence of left graded $kQ$-modules
\begin{align}\label{equ:xi}
  0 \longrightarrow \bigoplus_{\{\alpha\in Q_1\; | \; s(\alpha)=i\}}kQe_{t(\alpha)}(-1) \stackrel{\xi^Q_i}\longrightarrow  kQe_i \longrightarrow G_i^L\longrightarrow 0,
  \end{align}
  where $\xi^Q_i(p)=p\alpha$ for any path $p$ starting at $t(\alpha)$, and  $G_i^L$ is a graded simple $kQ$-module concentrated on degree 0; if $i\in Q_0$ is not a source, there is an exact sequence of left graded $kQ$-modules
 \begin{align}\label{equ:eta}
  0 \longrightarrow kQe_i(-1) \stackrel{\eta^Q_i}\longrightarrow \bigoplus_{\{\alpha\in Q_1\; | \; t(\alpha)=i\}} kQe_{s(\alpha)} \longrightarrow T_i^L\longrightarrow 0,
  \end{align}
where $\eta^Q_i(p)=\sum_{\{\alpha\in Q_1\; | \; t(\alpha)=i\}} p\alpha$ for any path $p$ starting at $i$.

The following result relates Leavitt path algebras to graded universal localizations; compare \cite{AB, Sm}.

\begin{prop}\label{prop:Leavittloc}
Keep the notation as above. Then we have the following statements:
\begin{enumerate}
\item the graded algebra homomorphism $\iota_Q\colon kQ\rightarrow L(Q)$ is a graded universal localization with respect to $\{\xi^Q_i\; |\; i\in Q_0 \mbox{ that are not sinks}\}$;
\item  the graded algebra homomorphism $\kappa_Q\colon kQ \rightarrow L(Q^{\rm op})^{\rm op}$ is a graded universal localization with respect to $\{\eta^Q_i\; |\; i\in Q_0 \mbox{ that are not sources}\}$.
\end{enumerate}
\end{prop}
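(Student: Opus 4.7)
The plan is to prove (1) by a direct Cuntz--Krieger computation and to deduce (2) from (1) applied to the opposite quiver.

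For (1), I first verify that $\iota_Q$ is $\{\xi^Q_i\}$-inverting. After extending scalars, $L(Q)\otimes_{kQ}\xi^Q_i$ is the left $L(Q)$-module map $\bigoplus_{s(\alpha)=i}L(Q)e_{t(\alpha)}(-1)\to L(Q)e_i$ sending $(y_\alpha)_\alpha\mapsto\sum_\alpha y_\alpha\alpha$, and the two Cuntz--Krieger relations immediately imply that $z\mapsto(z\alpha^*)_\alpha$ is its two-sided inverse. Next, I verify the universal property: given any graded $\{\xi^Q_i\}$-inverting homomorphism $\theta\colon kQ\to R$, let $\psi_i$ denote the inverse of $R\otimes_{kQ}\xi^Q_i$. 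Writing $\psi_i(e_i)=(\mu^{(i)}_\alpha)_\alpha$ produces elements $\mu^{(i)}_\alpha\in e_iRe_{t(\alpha)}$ of degree $-1$ for each $\alpha$ with $s(\alpha)=i$. Setting $\tilde\theta(\alpha^*):=\mu^{(s(\alpha))}_\alpha$ and extending $\theta$ on the path-algebra generators, the identity $\xi^Q_i\circ\psi_i={\rm id}$ translates to $\sum_{s(\alpha)=i}\alpha^*\alpha=e_i$, while $\psi_i\circ\xi^Q_i={\rm id}$ gives $\beta\alpha^*=\delta_{\alpha\beta}e_{t(\alpha)}$ for $s(\alpha)=s(\beta)$ (the case $s(\alpha)\neq s(\beta)$ being automatic from orthogonality of idempotents). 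These are precisely the Cuntz--Krieger relations, so by the defining presentation of $L(Q)$ this produces a graded algebra homomorphism $\tilde\theta\colon L(Q)\to R$ extending $\theta$; uniqueness is forced by uniqueness of the inverse $\psi_i$.

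For (2), apply (1) to $Q^{\rm op}$: the homomorphism $\iota_{Q^{\rm op}}\colon kQ^{\rm op}\to L(Q^{\rm op})$ is the graded universal localization with respect to $\{\xi^{Q^{\rm op}}_i\mid i\text{ not a source in }Q\}$. Passing to opposite algebras gives $\kappa_Q=\iota_{Q^{\rm op}}^{\rm op}$, and the universal property of $\iota_{Q^{\rm op}}$ transports to a corresponding universal property of $\kappa_Q$. What remains is to identify the class of morphisms inverted: a graded homomorphism $\theta\colon kQ\to R$ is $\{\eta^Q_i\}$-inverting if and only if $\theta^{\rm op}\colon kQ^{\rm op}\to R^{\rm op}$ is $\{\xi^{Q^{\rm op}}_i\}$-inverting. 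This follows from the standard identification of left $kQ$-modules with right $kQ^{\rm op}$-modules combined with the transpose construction on morphisms of finitely generated graded projective modules, which (up to a uniform degree shift) identifies $\eta^Q_i$ with $\xi^{Q^{\rm op}}_i$.

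The main obstacle is this final identification: aligning the degree and direction conventions of $\eta^Q_i$ and $\xi^{Q^{\rm op}}_i$ under the opposite-algebra/transpose correspondence is almost entirely bookkeeping, but is the place where the argument is most likely to be miscopied. As an alternative that avoids transposes, one can redo the computation of the first two paragraphs directly for $\eta^Q_i$: the inverse of $R\otimes_{kQ}\eta^Q_i$ produces elements $\alpha^*\in R$ satisfying $\sum_{t(\alpha)=i}\alpha\alpha^*=e_i$ and $\beta^*\alpha=\delta_{\alpha\beta}e_{s(\alpha)}$ for $t(\alpha)=t(\beta)$, which are exactly the Cuntz--Krieger relations for $L(Q^{\rm op})$ transcribed into $L(Q^{\rm op})^{\rm op}$, yielding the desired universal factorization.
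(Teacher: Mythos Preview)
Your proof is correct and follows essentially the same route as the paper. For part~(2) both you and the paper deduce the result from (1) applied to $Q^{\rm op}$ via the transpose duality on finitely generated graded projectives; your identification of $\eta^Q_i$ with $(\xi^{Q^{\rm op}}_i)^{\mathrm{tr}}$ up to a degree shift is exactly the paper's key observation. For part~(1) the paper simply cites \cite{Sm} for the ungraded universal localization and appeals to Proposition~\ref{prop:gradeduniloc}(1) to upgrade to the graded statement, whereas you give the direct Cuntz--Krieger verification of the graded universal property; this is the same underlying argument, just made explicit rather than outsourced, and your self-contained version has the minor advantage of not needing the passage between graded and ungraded localizations.
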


\begin{proof}
The statement (1) is contained in  \cite[5.3]{Sm}, where the homomorphism $\iota_Q$ is proved to be a universal localization of algebras. Then it follows immediately that it is a graded universal localization.

For (2), we recall a general fact; compare \cite[p.52]{Sch}. Let  $\theta\colon R\rightarrow S$ be a graded universal localization of graded algebras with respect to  a family $\Sigma$ of morphisms. Then $\theta^{\rm op}\colon R^{\rm op}\rightarrow S^{\rm op}$ is a graded universal localization with respect to $\Sigma^{\mathrm{tr}}=\{\xi^{\mathrm{tr}}\; |\: \xi\in \Sigma\}$. Here, $(-)^{\mathrm{tr}}=\bigoplus_{n\in \mathbb{Z}}{\rm Hom}_{R\mbox{-}{\rm Gr}}(-, R(n))\colon R\mbox{-grproj}\rightarrow R^{\rm op}\mbox{-grproj}$ is the duality functor on finitely generated graded projective modules. Notice that $(kQ^{\rm op}e_i(n))^{\mathrm{tr}}\simeq kQe_i(-n)$ for any vertex $i$ and $n\in \mathbb{Z}$. Then for a vertex $i$ in $Q$ that is not a source, we may identify  $\eta^Q_i(1)$ with $(\xi^{Q^{\rm op}}_i)^{\mathrm{tr}}$. Then (2) follows from (1).
\end{proof}

\section{The Gorenstein projective modules over trivial extensions}

In this section, we study the stable category of Gorenstein projective modules over a trivial extension of an algebra by an invertible bimodule, which is related to the derived category of a strongly graded algebra.

\subsection{Gorenstein projective modules} We first recall some results on Gorenstein projective modules.

 Let $\Lambda$ be an algebra. Recall from \cite[p.400]{AM} that a
 complex $X$ of projective $\Lambda$-modules is \emph{totally acyclic} if it is acyclic
 and for any projective $\Lambda$-module $Q$ the complex ${\rm
Hom}_\Lambda(X, Q)$ is acyclic. A $\Lambda$-module $G$ is called \emph{Gorenstein projective}
if there is a totally acyclic complex $X$ such that the
first cocycle $Z^1(X)$ is isomorphic to $G$ (\cite{EJ95}), in which case
the complex $X$ is said to be a \emph{complete resolution}
of $G$. Indeed, any cocycle $Z^i(X)$ is Gorenstein projective. We denote by $\Lambda\mbox{-GProj}$ the full subcategory  of
$\Lambda\mbox{-Mod}$ consisting of Gorenstein projective modules. Observe
that projective modules are Gorenstein projective.

Recall that the full subcategory $\Lambda\mbox{-GProj}$ of $\Lambda\mbox{-Mod}$ is closed under extensions. It follows
that  $\Lambda\mbox{-GProj}$ is an exact category in the sense of Quillen. Moreover, it is a Frobenius category, whose projective-injective objects are precisely projective $\Lambda$-modules. Then by \cite[Theorem I.2.8]{Hap88} the stable category   $\Lambda\mbox{-\underline{GProj}}$ modulo projective $\Lambda$-modules has a natural triangulated structure: the translation functor is a quasi-inverse of the syzygy functor, and triangles are induced by short exact sequences with terms in  $\Lambda\mbox{-GProj}$.

The stable category $\Lambda\mbox{-\underline{GProj}}$ is related to the homotopy category of complexes. Denote by $\Lambda\mbox{-Proj}$ the subcategory of $\Lambda\mbox{-Mod}$ consisting of projective modules, and  by $\mathbf{K}(\Lambda\mbox{-Proj})$ the homototpy category of complexes of projective modules. Let $\mathbf{K}_{\rm tac}(\Lambda\mbox{-Proj})$ be the triangulated subcategory of $\mathbf{K}(\Lambda\mbox{-Proj})$ consisting of totally acyclic complexes.

The following result is the dual of \cite[Proposition 7.2]{Kra}.

\begin{lem}\label{lem:GProj}
Keep the notation as above. Then there is a triangle equivalence
$$Z^1\colon \mathbf{K}_{\rm tac}(\Lambda\mbox{-}{\rm Proj}) \stackrel{\sim}\longrightarrow \Lambda\mbox{-}\underline{\rm GProj},$$
sending a complex $X$ to its first cocycle $Z^1(X)$. Its quasi-inverse sends a Gorenstein projective module to its complete resolution. \hfill $\square$
\end{lem}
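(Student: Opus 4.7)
The plan is to verify that $Z^1$ is a well-defined triangle functor, construct a quasi-inverse via complete resolutions, and establish full faithfulness by a lifting argument using Ext-vanishing; the proof is dual to \cite[Proposition 7.2]{Kra}.

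First I would check well-definedness on the stable category. For $X\in \mathbf{K}_{\rm tac}(\Lambda\mbox{-Proj})$, the cocycle $Z^1(X)$ is Gorenstein projective by the very definition, since $X$ is already a complete resolution of $Z^1(X)$. A null-homotopy $(s^n)$ for a chain map $f\colon X\to Y$ makes $Z^1(f)=d_Y^0\circ s^1|_{Z^1(X)}$ factor through the projective $Y^0$, so it vanishes in $\Lambda\mbox{-\underline{\rm GProj}}$. Essential surjectivity is immediate: every Gorenstein projective module admits a complete resolution by definition.

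The heart of the proof is the bijectivity of
$$Z^1\colon {\rm Hom}_{\mathbf{K}(\Lambda)}(X, Y)\To \underline{\rm Hom}_\Lambda(Z^1(X), Z^1(Y)).$$
For fullness, given $g\colon Z^1(X)\to Z^1(Y)$, I would construct a chain map $f\colon X\to Y$ inducing $g$ by two-sided induction. Lift the composite $X^0\twoheadrightarrow Z^1(X)\xrightarrow{g} Z^1(Y)$ along $Y^0\twoheadrightarrow Z^1(Y)$ to obtain $f^0$ using projectivity of $X^0$. In negative degrees, extend by iteratively using projectivity of $X^i$ together with the surjections $Y^{i-1}\twoheadrightarrow Z^i(Y)$ coming from acyclicity of $Y$. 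In non-negative degrees, the obstruction to extending $f^i$ to $f^{i+1}$ lies in ${\rm Ext}^1_\Lambda(Z^{i+2}(X), Y^{i+1})$, which vanishes since $Z^{i+2}(X)$ is Gorenstein projective and $Y^{i+1}$ is projective; this Ext-vanishing is precisely the total acyclicity condition on $Y$. Faithfulness is dual: if $Z^1(f)$ factors as $Z^1(X)\to P\to Z^1(Y)$ with $P$ projective, the same Ext-vanishing produces a contracting homotopy for $f$ by dimension shifting in both directions.

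Finally, the triangulated compatibility follows because $Z^1(X[1])=Z^2(X)$, and the short exact sequence $0\to Z^1(X)\to X^1\to Z^2(X)\to 0$ with $X^1$ projective realizes $Z^2(X)$ as the translation of $Z^1(X)$ in the Frobenius stable category, while a mapping cone in $\mathbf{K}_{\rm tac}(\Lambda\mbox{-Proj})$ produces the expected short exact sequence at the level of first cocycles. The principal obstacle is the Ext-vanishing bookkeeping in the fullness argument; everything else reduces to standard homotopical algebra for complete resolutions.
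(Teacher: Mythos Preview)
Your proposal is correct. The paper itself does not actually prove this lemma: it merely records that the statement is the dual of \cite[Proposition 7.2]{Kra} and places a $\square$, so you have in fact supplied the argument that the paper omits. Your outline---well-definedness on the homotopy/stable level, essential surjectivity from the definition of Gorenstein projective, fullness by two-sided lifting using ${\rm Ext}^1_\Lambda(Z^{i+2}(X),Y^{i+1})=0$, faithfulness by building a contracting homotopy from a factorization of $Z^1(f)$ through $Y^0$, and the identification of translation with the cosyzygy via $0\to Z^1(X)\to X^1\to Z^2(X)\to 0$---is the standard proof one would give for the dual of Krause's statement, and each step is sound. One small clarification worth making explicit in the faithfulness step: a factorization of $Z^1(f)$ through an arbitrary projective $P$ can be rerouted through the specific projective $Y^0$ because the map $P\to Z^1(Y)$ lifts along $Y^0\twoheadrightarrow Z^1(Y)$ by projectivity of $P$; once that is in hand, your inductive construction of the homotopy goes through exactly as you indicate.
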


 We recall the full subcategory $\Lambda\mbox{-Gproj}$ of $\Lambda\mbox{-Mod}$ as follows.   A $\Lambda$-module $G$ lies in $\Lambda\mbox{-Gproj}$ if and only if there is an acyclic complex $P$ of finitely generated projective $\Lambda$-module such that the dual $P^{\mathrm{tr}}={\rm Hom}_\Lambda(P, \Lambda)$ is acyclic and $Z^1(P)\simeq G$. In this case, the complex $P$ is a complete resolution of $G$, and thus $\Lambda\mbox{-Gproj}\subseteq \Lambda\mbox{-GProj}$. We mention that for a left coherent algebra $\Lambda$,  there is an equality  $\Lambda\mbox{-Gproj}=\Lambda\mbox{-GProj}\cap \Lambda\mbox{-mod}$; see \cite[Lemma 3.4]{Ch11'}. Here, $\Lambda\mbox{-mod}$ denotes the category of finitely presented $\Lambda$-modules.

   The subcategory $\Lambda\mbox{-Gproj}$ of $\Lambda\mbox{-Mod}$ is closed under extensions, and thus becomes an exact category. Moreover, it is a Frobenius category whose projective-injective objects are precisely finitely generated projective $\Lambda$-modules. The corresponding stable category  $\Lambda\mbox{-\underline{Gproj}}$ has a natural triangulated structure.

We observe that the stable category $\Lambda\mbox{-\underline{GProj}}$ has arbitrary coproducts, and that the  inclusion $\Lambda\mbox{-Gproj}\subseteq \Lambda\mbox{-GProj}$ induces an embedding
$$\Lambda\mbox{-\underline{Gproj}}\subseteq (\Lambda\mbox{-\underline{GProj}})^c$$
of triangulated categories. Here, we recall that $(\Lambda\mbox{-\underline{GProj}})^c$ denotes the thick subcategory of $\Lambda\mbox{-\underline{GProj}}$ consisting of compact objects. The embedding $\Lambda\mbox{-\underline{Gproj}}\subseteq (\Lambda\mbox{-\underline{GProj}})^c$ induces further an embedding $$\widetilde{\Lambda\mbox{-\underline{Gproj}}}\subseteq (\Lambda\mbox{-\underline{GProj}})^c,$$
 where $\widetilde{\Lambda\mbox{-\underline{Gproj}}}$ is the idempotent completion \cite{BS} of $\Lambda\mbox{-\underline{Gproj}}$.

\subsection{Trivial extension} We will study modules over a trivial extension $\Lambda=B\ltimes X$ of an algebra $B$ by an invertible $B$-bimodule $X$. Observe that there exists a strongly graded algebra $A=\bigoplus_{n\in \mathbb{Z}} A^n$ such that $A^0=B$ and $A^{-1}=X$ as $A$-bimodules. In what follows, we begin with a strongly graded algebra $A$ and consider the corresponding trivial extension $\Lambda=A^0\ltimes A^{-1}$.

Let $A=\bigoplus_{n\in \mathbb{Z}} A^n$ be a strongly graded algebra. Then each $A^0$-bimodule $A^n$ is invertible and the multiplication induces an isomorphism
\begin{align}\label{equ:tensor}
A^n\otimes_{A^0} A^m\stackrel{\sim}\longrightarrow A^{n+m}
\end{align}
of $A^0$-bimodules. This isomorphism sends $a^n\otimes a^m$ to $a^na^m$ for $a^n\in A^n$ and $a^m\in A^m$.

We observe the following result.

\begin{lem}
Let $M$ be an $A^0$-module, and let $n, m\in \mathbb{Z}$. Then the following hold:
\begin{enumerate}
\item there is an isomorphism
\begin{align} \label{equ:iso1}
\theta^n_M\colon A^n\otimes_{A^0} M \stackrel{\sim}\longrightarrow {\rm Hom}_{A^0}(A^{-n}, M)
\end{align}
such that $\theta^n_M(a^n\otimes m) (a^{-n})=(a^{-n}a^n).m$;
\item there is an isomorphism
\begin{align}\label{equ:iso}
\theta^{n, m}\colon A^{m-n}\stackrel{\sim}\longrightarrow {\rm Hom}_{A^0}(A^n, A^m)
\end{align}
such that $\theta^{n, m}(a)(b)=ba$ for $a\in A^{m-n}$ and $b\in A^n$.
\end{enumerate}
\end{lem}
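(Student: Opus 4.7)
The plan is to prove (1) directly by exhibiting an inverse to $\theta^n_M$ using a ``dual basis'' extracted from the multiplication isomorphism (\ref{equ:tensor}), and then to deduce (2) by specialising (1) to $M=A^m$ and composing with (\ref{equ:tensor}) again.

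For part (1), I would first verify that the stated formula
\[
\theta^n_M(a^n\otimes m)(a^{-n})=(a^{-n} a^n).m
\]
defines a well-defined left $A^0$-linear map: it is $A^0$-balanced in the tensor variables since $a^{-n}(a^n.c)=(a^{-n}a^n)c$ for $c\in A^0$, and for fixed $a^n\otimes m$ the assignment $a^{-n}\mapsto (a^{-n}a^n).m$ is $A^0$-linear from the left because $A^{-n}$ is a left $A^0$-module via multiplication. To construct the inverse, I would exploit strong gradedness: the isomorphism (\ref{equ:tensor}) applied in degrees $n$ and $-n$ yields $A^nA^{-n}=A^0$, so there exist finitely many $a^n_i\in A^n$ and $a^{-n}_i\in A^{-n}$ with $\sum_i a^n_i a^{-n}_i=1_{A^0}$. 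I would define
\[
\Phi\colon \mathrm{Hom}_{A^0}(A^{-n}, M)\to A^n\otimes_{A^0}M, \quad \Phi(f)=\sum_i a^n_i\otimes f(a^{-n}_i),
\]
and check the two composites are identities: for any $a^{-n}\in A^{-n}$, the dual-basis relation gives $a^{-n}=\sum_i (a^{-n}a^n_i)a^{-n}_i$, from which $\theta^n_M\circ\Phi=\mathrm{id}$ is immediate; conversely, for $a^n\otimes m$, moving the scalar $a^{-n}_i a^n\in A^0$ across the tensor and using the dual-basis relation yields $\Phi\circ\theta^n_M=\mathrm{id}$.

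For part (2), I would apply (1) with $n$ replaced by $-n$ and $M=A^m$ to obtain an isomorphism $A^{-n}\otimes_{A^0}A^m\stackrel{\sim}\longrightarrow\mathrm{Hom}_{A^0}(A^n, A^m)$, and precompose it with the inverse of the multiplication isomorphism (\ref{equ:tensor}) $A^{-n}\otimes_{A^0}A^m\stackrel{\sim}\longrightarrow A^{m-n}$. Unwinding the composite on pure tensors gives $\theta^{n,m}(a^{-n}a^m)(a^n)=(a^n a^{-n})a^m=a^n(a^{-n}a^m)$ by associativity in $A$, so that $\theta^{n,m}(a)(b)=ba$ on elements of the form $a=a^{-n}a^m$; since such elements span $A^{m-n}$, linearity extends the formula to all of $A^{m-n}$.

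No serious obstacle is expected. The content is essentially Morita-theoretic---the statement that $A^n$ and $A^{-n}$ are mutually inverse invertible $A^0$-bimodules under the multiplication pairing---and the only care required is bookkeeping about whether $A^0$ acts from the left or from the right, which in every instance reduces to associativity of multiplication in the ambient graded algebra $A$.
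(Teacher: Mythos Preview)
Your proof is correct. For part (1) you take a more explicit route than the paper: you construct the inverse $\Phi$ directly via a dual-basis decomposition $1=\sum_i a^n_i a^{-n}_i$ and verify both composites by hand, whereas the paper argues abstractly that the endofunctors $A^n\otimes_{A^0}-$ and $\mathrm{Hom}_{A^0}(A^{-n},-)$ on $A^0\text{-Mod}$ are both quasi-inverse to $A^{-n}\otimes_{A^0}-$ and hence naturally isomorphic. Your approach has the advantage of verifying the \emph{specific} formula $\theta^n_M(a^n\otimes m)(a^{-n})=(a^{-n}a^n).m$ on the nose, which the paper's categorical argument does not immediately yield (one still has to identify which natural isomorphism one gets); the paper's approach is shorter and makes the naturality in $M$ transparent without computation. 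For part (2) your argument is essentially identical to the paper's: specialise (1) with $n$ replaced by $-n$ and $M=A^m$, then precompose with the multiplication isomorphism $A^{m-n}\simeq A^{-n}\otimes_{A^0}A^m$ from (\ref{equ:tensor}).
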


\begin{proof}
For (1), we use the fact that the endofunctor $A^n\otimes_{A^0}-$ on $A^0\mbox{-Mod}$ is isomorphic to  ${\rm Hom}_{A^0}(A^{-n}, -)$; both are quasi-inverse to $A^{-n}\otimes_{A^0}-$. The isomorphism in (2) follows from (1), once we recall from (\ref{equ:tensor}) the isomorphism $A^{m-n}\simeq A^{-n}\otimes_{A^0} A^m$.
\end{proof}

Let $\Lambda=A^0\ltimes A^{-1}$ be the \emph{trivial extension} of $A^0$ by the $A^0$-bimodule $A^{-1}$. That is, an element in $\Lambda$ is given by $(a, b)$ with $a\in A^0$ and $b\in A^{-1}$, and the multiplication is defined such that $(a, b)\cdot (a', b')=(aa', ab'+ba')$.  We have the natural embedding $A^0\rightarrow \Lambda$ by identifying $a$ with $(a, 0)$. We observe that $\Lambda$ is a finitely generated projective $A^0$-module on each side.

  We observe that $A^{-1}$ is a two-sided ideal of $\Lambda$ and that $\Lambda/A^{-1}\simeq A^0$. In this manner, we view $A^0$ as a $\Lambda$-module. We claim that $A^0$ lies in $\Lambda\mbox{-Gproj}$.

   For the claim,  we will construct a complex $P$ of finitely generated projective $\Lambda$-modules. For each integer $n$, we define $P^{n}=A^{n}\oplus A^{n-1}$, which carries a natural $\Lambda$-module structure induced by the multiplication of $A$. More precisely, for $(a, b)\in \Lambda$ and $(a^{n}, a^{n-1})\in P^n$, the action is given such that $(a, b).(a^{n}, a^{n-1})=(aa^n, aa^{n-1}+ba^{n})$. We observe that there is an isomorphism $P^n\simeq \Lambda\otimes_{A^0} A^{n}$ of $\Lambda$-modules. Recall that the $A^0$-bimodule $A^{n}$ is invertible. Then each $\Lambda$-module $P^n$ is  finitely generated projective. We define the differential $d^n\colon P^n\rightarrow P^{n+1}$ such that $d^n((a^{n}, a^{n-1}))=(0, a^{n})$. Then the complex $P$ is acyclic with $Z^{1}(P)\simeq A^0$.

   Consider the following  isomorphisms of right $\Lambda$-modules
  $${\rm Hom}_\Lambda(P^n, \Lambda)\simeq {\rm Hom}_\Lambda(\Lambda\otimes_{A^0} A^{n}, \Lambda)\simeq {\rm Hom}_{A^0}(A^{n}, \Lambda)\simeq A^{-n}\oplus A^{-n-1}.$$
Here, the rightmost isomorphism is obtained by applying (\ref{equ:iso}) twice, and the right $\Lambda$-module structure on $A^{-n}\oplus A^{-n-1}$ is induced from the multiplication of $A$. Via these composite isomorphisms, the differential of the dual complex $(P)^{\mathrm{tr}}={\rm Hom}_\Lambda (P, \Lambda)$ sends  $(a^{-n-1}, a^{-n-2})$ in $(P^{n+1})^{\mathrm{tr}}$ to $(-1)^n(0, a^{-n-1})$ in $(P^n)^{\mathrm{tr}}$. In particular, $P^{\mathrm{tr}}$ is acyclic. This proves that $A^0$ lies in $\Lambda\mbox{-Gproj}$. Moreover, the complex $P$ is a complete resolution of $A^0$. The differential $d^0\colon P^0\rightarrow P^1$ induces a monomorphism $d\colon A^0\rightarrow P^1$ such that $d(a)=(0, a)$. We observe that $d$ is a left $\Lambda\mbox{-Proj}$-approximation, which means that any $\Lambda$-module homomorphism $A^0\rightarrow Q$ with $Q$ projective factors through $d$.

Let $M$ be a $\Lambda$-module. We will compute the Hom space $\underline{\rm Hom}_\Lambda(A^0, M)$ in the stable module category $\Lambda\mbox{-\underline{Mod}}$. Recall that $\underline{\rm Hom}_\Lambda(A^0, M)={\rm Hom}_\Lambda(A^0, M)/\mathcal{P}$, where $\mathcal{P}$ denotes the subspace consisting of homomorphisms from $A^0$ to $M$ that factor though projectives.

The $A^{-1}$-action on $M$ induces a homomorphism of  $A^0$-modules $$\phi_M\colon A^{-1}\otimes_{A^0} M\rightarrow M,$$
 which satisfies $\phi_M\circ ({\rm Id}_{A^{-1}}\otimes \phi_M)=0$; more explicitly, $\phi_M(a^{-1}\otimes m)=(0, a^{-1}).m$.  There is a unique $A^0$-submodule $K_M$ of $M$ with the property $A^{-1}\otimes_{A^0} K_M={\rm Ker}\; \phi_M$. Here, we use the following general fact: since the $A^0$-bimodule $A^{-1}$ is invertible, each submodule of  $A^{-1}\otimes_{A^0} M$ is of the form $A^{-1}\otimes_{A^0} M'$ for an $A^0$-submodule $M'$ of $M$. Hence, we have the following exact sequence of $A^0$-modules:
\begin{align}\label{equ:K_M}
0\longrightarrow A^{-1}\otimes_{A^0} K_M\longrightarrow A^{-1}\otimes_{A^0} M\stackrel{\phi_M}\longrightarrow M.
\end{align}
The condition $\phi_M\circ ({\rm Id}_{A^{-1}}\otimes \phi_M)=0$ implies that ${\rm Im}\; \phi_M\subseteq K_M$. Indeed, $K_M=\{m\in M\; |\; A^{-1}.m=0\}$.

\begin{lem}\label{lem:stablehom}
Keep the notation as above. Then there is an isomorphism
$$\underline{\rm Hom}_\Lambda(A^0, M)\simeq K_M/{{\rm Im}\; \phi_M}.$$
\end{lem}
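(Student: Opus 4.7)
The plan is to compute the stable Hom as the cokernel of the map induced by a left $\Lambda\mbox{-Proj}$-approximation, and then identify both sides explicitly in terms of the invertible $A^0$-bimodule $A^{-1}$.

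First I would identify the ordinary Hom space ${\rm Hom}_\Lambda(A^0, M)$ with $K_M$. Since $A^{-1}$ is a two-sided ideal of $\Lambda$ acting as zero on the $\Lambda$-module $A^0$, a $\Lambda$-linear map $f\colon A^0 \to M$ is determined by $f(1) \in M$, subject to $A^{-1}.f(1) = 0$, that is, $f(1) \in K_M$. This gives ${\rm Hom}_\Lambda(A^0, M) \cong K_M$ via $f \mapsto f(1)$.

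Next I would invoke the fact, already recorded just before the lemma, that $d\colon A^0 \to P^1$ with $d(a) = (0, a)$ is a left $\Lambda\mbox{-Proj}$-approximation. Therefore the subspace $\mathcal{P} \subseteq {\rm Hom}_\Lambda(A^0, M)$ of morphisms factoring through a projective coincides with the image of the map $d^*\colon {\rm Hom}_\Lambda(P^1, M) \to {\rm Hom}_\Lambda(A^0, M)$. Consequently
\begin{equation*}
\underline{\rm Hom}_\Lambda(A^0, M) \;\cong\; {\rm Coker}\bigl(d^*\colon {\rm Hom}_\Lambda(P^1, M) \to K_M\bigr).
\end{equation*}

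The main technical step is to compute this cokernel as $K_M/{\rm Im}\;\phi_M$. Using the isomorphism $P^1 \simeq \Lambda \otimes_{A^0} A^1$ and the $(\Lambda, A^0)$-adjunction, I obtain a natural isomorphism
\begin{equation*}
{\rm Hom}_\Lambda(P^1, M) \;\cong\; {\rm Hom}_{A^0}(A^1, M).
\end{equation*}
Applying the isomorphism $\theta^{-1}_M$ of (\ref{equ:iso1}) with $n=-1$ then gives
\begin{equation*}
{\rm Hom}_\Lambda(P^1, M) \;\cong\; A^{-1} \otimes_{A^0} M.
\end{equation*}
I then need to check that under these identifications $d^*$ becomes precisely the map $\phi_M$ (landing in $K_M$, which is automatic from $A^{-1}\cdot A^{-1} = 0$ in $\Lambda$). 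Concretely, for a homomorphism $g\colon P^1 \to M$ corresponding to $\tilde g\colon A^1 \to M$, the element $g\circ d(1)$ is $g(0,1)$; writing $1 = \sum_i a^{-1}_i a^1_i$ via strong gradedness and unraveling, $g\circ d(1) = \sum_i (0,a^{-1}_i).\tilde g(a^1_i) = \sum_i \phi_M(a^{-1}_i \otimes \tilde g(a^1_i))$, which is exactly the image of the element of $A^{-1}\otimes_{A^0} M$ corresponding to $\tilde g$ under $\theta^{-1}_M$ composed with $\phi_M$.

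The main obstacle is the bookkeeping of step three: matching the adjunction isomorphism with the isomorphism (\ref{equ:iso1}) so that $d^*$ literally becomes $\phi_M$, rather than some twist of it. Once this identification is in place, the cokernel of $\phi_M\colon A^{-1}\otimes_{A^0} M \to K_M$ is $K_M/{\rm Im}\;\phi_M$, and combining with the earlier identifications yields the desired isomorphism $\underline{\rm Hom}_\Lambda(A^0, M) \cong K_M/{\rm Im}\;\phi_M$.
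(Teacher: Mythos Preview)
Your proposal is correct and follows essentially the same approach as the paper: both identify ${\rm Hom}_\Lambda(A^0,M)\cong K_M$ via evaluation at $1$, use the left $\Lambda\mbox{-Proj}$-approximation $d\colon A^0\to P^1$ to describe $\mathcal{P}$ as the image of $d^*$, and then identify ${\rm Hom}_\Lambda(P^1,M)\cong A^{-1}\otimes_{A^0}M$ via the adjunction together with $\theta_M^{-1}$, checking that the composite becomes $\phi_M$. The paper carries out the same bookkeeping in the forward direction (computing $\Phi(a^{-1}\otimes m)((a^1,a^0))$ explicitly), whereas you compute $g(0,1)$ going backwards, but the content is identical.
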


We apply the isomorphism to the case $M=A^0$. Then we obtain an isomorphism of algebras
\begin{align}\label{equ:isoA}
(A^0)^{\rm op} \stackrel{\sim}\longrightarrow \underline{\rm End}_\Lambda(A^0).
\end{align}
Here, $\phi_M=0$ and $K_M=A^0$. This isomorphism sends $a^0\in A^0$ to the endomorphism $r_{a^0}\colon A^0\rightarrow A^0$ given by  $r_{a^0}(x)=xa^0$.

\begin{proof}
Recall that $A^0=\Lambda/{A^{-1}}$. Then there is an isomorphism
$$\phi\colon {\rm Hom}_\Lambda(A^0, M)\stackrel{\sim}\longrightarrow K_M$$
sending $f$ to $f(1)$. It suffices to show that $\phi$ identifies $\mathcal{P}$ with ${\rm Im}\; \phi_M$. Recall that $d\colon A^0\rightarrow P^1$, that sends $a^0$ to $(0, a^0)\in P^1$,  is a left $\Lambda\mbox{-Proj}$-approximation. It follows that $\mathcal{P}$ is the image of  ${\rm Hom}_\Lambda(d, M)\colon {\rm Hom}_\Lambda(P^1, M)\rightarrow {\rm Hom}_\Lambda(A^0, M)$.

Recall the isomorphism $\psi\colon\Lambda\otimes_{A^0} A^1\simeq P^1$, sending $(a, b)\otimes a^0$ to $(aa^0, ba^0)$. Assume that the isomorphism $A^0\simeq A^{-1}\otimes_{A^0} A^1$ of $A^0$-bimodules, which is induced by the multiplication of $A$,  sends $1$ to $\sum_{i=1}^r a_i\otimes b_i$; moreover, we have $1=\sum_{i=1}^r a_ib_i$; compare (\ref{equ:tensor}). Then we have $$\psi^{-1}((a^1, a^0))= (1, 0)\otimes a^1+ \sum_{i=1}^r(0, a^0a_i)\otimes b_i.$$

Consider the following isomorphisms
$$\Phi\colon A^{-1}\otimes_{A^0} M\stackrel{\theta_M^{-1}}\rightarrow {\rm Hom}_{A^0}(A^1, M)\simeq {\rm Hom}_\Lambda(\Lambda\otimes_{A^0} A^1, M)\simeq {\rm Hom}_\Lambda(P^1, M)$$
where the isomorphism $\theta_M^{-1}$ is given in (\ref{equ:iso1}) and the rightmost isomorphism is given by ${\rm Hom}_\Lambda(\psi^{-1}, M)$. By direct calculation,  we infer that the composite isomorphism $\Phi$ satisfies $$\Phi(a^{-1}\otimes m)((a^1, a^0))=(a^1a^{-1}, a^0a^{-1}).m.$$
 Then ${\rm Hom}_\Lambda(d, M)\circ \Phi\colon A^{-1}\otimes_{A^0} M\rightarrow {\rm Hom}_\Lambda (A^0, M)$ sends $a^{-1}\otimes m$ to a map, that sends $a^0\in A^0$ to $(0, a^0a^{-1}).m$. Hence, the composite $\phi\circ {\rm Hom}_\Lambda(d, M)\circ \Phi$ coincides with $\phi_M$. In particular, $\phi$ identifies the image of ${\rm Hom}_\Lambda(d, M)$, which  is just shown to be $\mathcal{P}$, with ${\rm Im}\; \phi_M$. Then we are done. \end{proof}

\subsection{The stable category} Let $\Lambda=A^0\ltimes A^{-1}$ be as above. Recall that the stable category $\Lambda\mbox{-\underline{GProj}}$ of Gorenstein projective $\Lambda$-modules is a triangulated category with arbitrary coproducts.
Recall that $A^0\in \Lambda\mbox{-\underline{Gproj}}\subseteq \Lambda\mbox{-\underline{GProj}}$. We denote by  ${\rm thick} \langle A^0\rangle $  the smallest thick subcategory that contains $A^0$, and by ${\rm Loc}\langle A^0 \rangle$ the smallest triangulated subcategory that contains $A^0$ and is closed under arbitrary coproducts.

Recall the embedding $\widetilde{\Lambda\mbox{-\underline{Gproj}}}\subseteq (\Lambda\mbox{-\underline{GProj}})^c$, and observe that ${\rm thick} \langle A^0\rangle\subseteq \widetilde{\Lambda\mbox{-\underline{Gproj}}}$. For an object $X$ in an additive category, we denote by ${\rm add}\; X$ the full subcategory consisting of direct summands of direct sums of finitely many copies of $X$.

\begin{prop}\label{prop:hered}
Let $A=\bigoplus_{n\in \mathbb{Z}} A^n$ be a strongly graded algebra, and let $\Lambda=A^0\ltimes A^{-1}$ be the trivial extension. Assume that the algebra $A^0$ is left hereditary. Then the following hold:
\begin{enumerate}
\item the stable category $\Lambda\mbox{-\underline{\rm GProj}}$ is compactly generated with $A^0$ a compact generator;
\item $\Lambda\mbox{-\underline{\rm GProj}}={\rm Loc}\langle A^0 \rangle$ and $(\Lambda\mbox{-\underline{\rm GProj}})^c={\rm thick}\langle A^0 \rangle$, which is equivalent to $\widetilde{\Lambda\mbox{-\underline{\rm Gproj}}}$, the idempotent completion of $\Lambda\mbox{-\underline{\rm Gproj}}$;
\item if in addition $A^0$ is von Neumann regular, then $\Lambda\mbox{-}{\rm Gproj}={\rm add}\; (A^0\oplus \Lambda)$, and there are  equivalences of triangulated categories
$$(\Lambda\mbox{-\underline{\rm GProj}})^c \stackrel{\sim}\longrightarrow \Lambda\mbox{-\underline{\rm Gproj}}\stackrel{\sim}
\longrightarrow (A^0\mbox{-}{\rm proj}, A^1\otimes_{A^0}-).$$
\end{enumerate}
\end{prop}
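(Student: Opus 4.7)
The plan is to build on Lemma~\ref{lem:stablehom} and the explicit complete resolution $P^\bullet$ of $A^0$ (with $P^n=\Lambda\otimes_{A^0}A^n$) constructed before that lemma. From this resolution one reads off $Z^{n+1}(P^\bullet)\simeq A^n$, so via Lemma~\ref{lem:GProj} the shifts of $A^0$ in $\Lambda\mbox{-\underline{\rm GProj}}$ are identified as $A^0[n]\simeq A^n$ for every $n\in\mathbb Z$, each viewed as a $\Lambda$-module through the projection $\Lambda\twoheadrightarrow A^0$. This identification is the backbone of the argument: it reduces questions about shifts of $A^0$ to explicit $\Lambda$-module computations against the formula of Lemma~\ref{lem:stablehom}.

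For (1), compactness of $A^0$ is immediate from Lemma~\ref{lem:stablehom}: the isomorphism $\underline{\rm Hom}_\Lambda(A^0,M)\simeq K_M/A^{-1}M$ makes it clear that $M\mapsto K_M$ and $M\mapsto A^{-1}M$ both commute with arbitrary coproducts. For generation, I would take $T\in\Lambda\mbox{-GProj}$ with $\underline{\rm Hom}_\Lambda(A^0,T[n])=0$ for all $n$ and aim to show $T$ is projective. Fixing a complete resolution $X^\bullet$ of $T$ and using the identification $T[n]\simeq Z^{n+1}(X^\bullet)$, the hypothesis translates via Lemma~\ref{lem:stablehom} to $K_{Z^n(X^\bullet)}=A^{-1}Z^n(X^\bullet)$ for every $n$. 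I would then propagate this along the short exact sequence of complexes
\[ 0\longrightarrow A^{-1}\otimes_{A^0}Q^\bullet\longrightarrow X^\bullet\longrightarrow Q^\bullet\longrightarrow 0, \]
where $Q^\bullet=X^\bullet/A^{-1}X^\bullet$ is a complex of projective $A^0$-modules (here the left hereditary hypothesis enters, ensuring projectivity of the relevant kernels and cokernels), using~(\ref{equ:K_M}) in each degree. Upgrading the degreewise kernel-equals-image relations into a genuine contractibility statement for $X^\bullet$ is the main obstacle of the proof. Once (1) is in hand, part (2) follows from the compact-generation framework of Section~2 ($\mathcal T={\rm Loc}\langle A^0\rangle$ and $\mathcal T^c={\rm thick}\langle A^0\rangle$), and the identification ${\rm thick}\langle A^0\rangle\simeq\widetilde{\Lambda\mbox{-\underline{\rm Gproj}}}$ combines the embedding $\widetilde{\Lambda\mbox{-\underline{\rm Gproj}}}\subseteq(\Lambda\mbox{-\underline{\rm GProj}})^c$ noted before the statement with $A^0\in\Lambda\mbox{-\underline{\rm Gproj}}$.

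For (3), the VNR hypothesis makes $A^0\mbox{-proj}$ semisimple abelian, which splits the kind of complexes appearing in the previous argument. Taking $T\in\Lambda\mbox{-Gproj}$ with complete resolution $X^\bullet$ of finitely generated projective $\Lambda$-modules, each $X^n\simeq\Lambda\otimes_{A^0}Q^n$ with $Q^n$ finitely generated projective over $A^0$, and the semisimple-abelian structure of $A^0\mbox{-proj}$ forces the building blocks of the $A^{-1}$-filtration on $X^\bullet$ to contribute only copies of $A^0$ (from stalks, whose cocycles give $A^0$ up to shift) or copies of $\Lambda$ (from identity pieces, which give contractible summands of $X^\bullet$). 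This yields $\Lambda\mbox{-Gproj}={\rm add}(A^0\oplus\Lambda)$, so $\Lambda\mbox{-\underline{\rm Gproj}}={\rm add}(A^0)$ is already idempotent-complete and coincides with $(\Lambda\mbox{-\underline{\rm GProj}})^c$ by (2). The remaining triangle equivalence with $(A^0\mbox{-proj},A^1\otimes_{A^0}-)$ is induced by the forgetful functor ${\rm add}(A^0)\to A^0\mbox{-proj}$; using (\ref{equ:isoA}) and the multiplication isomorphism (\ref{equ:tensor}) together with the already-established $A^0[n]\simeq A^n$, one checks that this forgetful functor intertwines the translation $[1]$ with the auto-equivalence $A^1\otimes_{A^0}-$ appearing in Lemma~\ref{lem:perfder}.
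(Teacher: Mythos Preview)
Your approach to (1) carries a real gap that you yourself flag as ``the main obstacle'': you assume vanishing of $\underline{\rm Hom}_\Lambda(A^0,T[n])$ for \emph{all} $n$, pass to a complete resolution $X^\bullet$, and then try to upgrade degreewise conditions to contractibility of $X^\bullet$. This last step is not obviously available, and you do not carry it out.

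The paper avoids this entirely by proving a sharper statement: if $M$ is Gorenstein projective and $\underline{\rm Hom}_\Lambda(A^0,M)=0$ for the single value $n=0$, then $M$ is already projective. The argument works directly with $M$, not with a resolution. Since $M$ embeds in a projective $\Lambda$-module and $A^0$ is left hereditary, the underlying $A^0$-module $M$ is projective; hence so is ${\rm Im}\,\phi_M$, and the exact sequence (\ref{equ:K_M}) splits. This yields an $A^0$-decomposition $M=K'\oplus K_M$ with $\phi_M$ inducing $A^{-1}\otimes_{A^0}K'\simeq {\rm Im}\,\phi_M$. Now Lemma~\ref{lem:stablehom} turns the vanishing hypothesis into ${\rm Im}\,\phi_M=K_M$, so $M\simeq K'\oplus(A^{-1}\otimes_{A^0}K')\simeq\Lambda\otimes_{A^0}K'$, which is projective. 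No shifts, no resolutions, no contractibility obstacle.

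The same shift in viewpoint simplifies (3). Rather than analysing a complete resolution and arguing about an ``$A^{-1}$-filtration on $X^\bullet$'', the paper again works directly with $M\in\Lambda\mbox{-Gproj}$: one has the splitting $M=K'\oplus K_M$ as above, and the von Neumann regular hypothesis now splits the inclusion ${\rm Im}\,\phi_M\subseteq K_M$ as $K_M={\rm Im}\,\phi_M\oplus K''$. Then $K'\oplus{\rm Im}\,\phi_M\simeq\Lambda\otimes_{A^0}K'\in{\rm add}\,\Lambda$ and $K''\in{\rm add}\,A^0$. Your identification of the translation with $A^1\otimes_{A^0}-$ is correct in spirit; the paper phrases it via the syzygy sequence $0\to A^{-1}\otimes_{A^0}Q\to\Lambda\otimes_{A^0}Q\to Q\to 0$, which matches your use of (\ref{equ:tensor}).

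Your treatment of (2) is fine and agrees with the paper.
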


We recall that if $A^0$ is von Neumann regular,  the category $A^0\mbox{-}{\rm proj}$ of finitely generated projective $A^0$-modules is semisimple abelian and that
$A^{1}\otimes_{A^0}-$ is an auto-equivalence on it. Thus we have the triangulated category $(A^0\mbox{-}{\rm proj}, A^{1}\otimes_{A^0}-)$; see Section 4.

\begin{proof}
Recall that $A^0$ is compact in $\Lambda\mbox{-\underline{GProj}}$. We claim that for a Gorenstein projective $\Lambda$-module $M$, $\underline{\rm Hom}_\Lambda (A^0, M)=0$ implies that $M$ is projective. Then (1) follows, and (2) is an immediate consequence of (1).

For the claim, we observe that any projective $\Lambda$-module is projective as an $A^0$-module. Since $M$ is a submodule of a projective $\Lambda$-module and $A^0$ is left hereditary, the underlying $A^0$-module $M$ and then ${\rm Im}\; \phi_M$ is projective. Then the following short exact sequence induced by  (\ref{equ:K_M})
$$
0\longrightarrow A^{-1}\otimes_{A^0} K_M\longrightarrow A^{-1}\otimes_{A^0} M\stackrel{\phi_M}\longrightarrow {\rm Im}\; \phi_M\longrightarrow 0$$
splits. In particular, there is a decomposition $M=K'\oplus K_M$ of $A^0$-modules such that $\phi_M$ induces an isomorphism $A^{-1}\otimes_{A^0} K'\simeq {\rm Im}\; \phi_M$. By Lemma \ref{lem:stablehom}, $\underline{\rm Hom}_\Lambda(A^0, M)=0$  implies that ${\rm Im}\;\phi_M=K_M$. This implies that $$K'\oplus (A^{-1}\otimes_{A^0} K')\stackrel{\sim}\longrightarrow M,$$
which implies that $M$ is isomorphic to $\Lambda\otimes_{A^0}K'$. Note that the $A^0$-module $K'$ is projective. It follows that the $\Lambda$-module $M$ is projective.

For (3), let $M\in \Lambda\mbox{-Gproj}$. We will show that $M$ lies in ${\rm add}\; (A^0\oplus \Lambda)$. Observe that $M$, $K_M$ and ${\rm Im}\; \phi_M$, as $A^0$-modules, are finitely generated projective. Since $A^0$ is von Neumann regular, ${\rm Im}\; \phi_M\subseteq K_M$ is a direct summand. Hence, we have a decomposition $K_M={\rm Im}\; \phi_M\oplus K''$ of $A^0$-modules. Then we have  $$M=(K'\oplus {\rm Im}\; \phi_M)\oplus K''.$$
 By the isomorphism $A^{-1}\otimes_{A^0} K'\simeq {\rm Im}\; \phi_M$, the $A^0$-submodule $K'\oplus {\rm Im}\; \phi_M$ of $M$ is indeed a $\Lambda$-submodule, which is isomorphic to $\Lambda\otimes_{A^0} K'$ and then belongs to ${\rm add}\; \Lambda$. The $A^{-1}$-action  on $K_M$ and then on $K''$ is trivial. In particular, $K''\subseteq M$ is a $\Lambda$-submodule. Since $K''$, as an $A^0$-module, is finitely generated projective, we have that $K''$ belongs to ${\rm add}\; A^0$.

For the triangle equivalences in (3), we consider the natural functor $$F\colon A^0\mbox{-proj}\longrightarrow \Lambda\mbox{-\underline{\rm Gproj}},$$ where we view a finitely generated projective $A^0$-module $Q$ as a $\Lambda$-module, on which $A^{-1}$ acts trivially. This functor is well defined, since $A^0$ and thus each $Q$ lie in $\Lambda\mbox{-{\rm Gproj}}$. The functor $F$ is fully faithful by the isomorphism (\ref{equ:isoA}). By the above, it is dense, and thus an equivalence of categories.

It remains to show that via the equivalence $F$, the translation functor on $\Lambda\mbox{-}\underline{\mbox{Gproj}}$ corresponds to the functor $A^1\otimes_{A^0}-$ on $A^0\mbox{-proj}$. For this, recall that the syzygy functor is a quasi-inverse of the translation functor on the stable category $\Lambda\mbox{-\underline{\rm Gproj}}$. We observe an exact sequence $$0\rightarrow A^{-1}\otimes_{A^0} Q\rightarrow \Lambda\otimes_{A^0} Q\rightarrow Q\rightarrow 0$$ for each $A^0$-module $Q$. This implies that the syzygy functor on $\Lambda\mbox{-\underline{\rm Gproj}}$ corresponds to the functor  $A^{-1}\otimes_{A^0}-$ on $A^0\mbox{-proj}$, and thus the translation functor corresponds to $A^{1}\otimes_{A^0}-$.

We infer from the equivalence $F$ that the category $\Lambda\mbox{-}\underline{\mbox{Gproj}}$ has split idempotents, or equivalently,  it is equivalent to the idempotent completion $\widetilde{\Lambda\mbox{-\underline{\rm Gproj}}}$. Then the remaining equivalence follows from (2).
\end{proof}

The following result relates the stable category of Gorenstein projective modules to the derived category of the strongly graded algebra.

\begin{prop}\label{prop:loc}
Let $A=\bigoplus_{n\in \mathbb{Z}} A^n$ be a strongly graded algebra, and let $\Lambda=A^0\ltimes A^{-1}$ be the trivial extension.  Then there is a triangle equivalence
$${\rm Loc}\langle A^0 \rangle\stackrel{\sim}\longrightarrow \mathbf{D}(A),$$
sending $A^0$ to $A$. If $A^0$ is left hereditary, there is a triangle equivalence
$$\Lambda\mbox{-\underline{\rm GProj}} \stackrel{\sim}\longrightarrow \mathbf{D}(A).$$
\end{prop}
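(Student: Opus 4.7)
The second triangle equivalence follows from the first combined with Proposition \ref{prop:hered}(2), so my plan is to produce ${\rm Loc}\langle A^0\rangle \stackrel{\sim}\longrightarrow \mathbf{D}(A)$ by applying Proposition \ref{prop:triangle-equi} to the complete resolution $P$ of $A^0$ constructed in Subsection 5.2, equipped with a suitable right $A$-action. Recall $P^n = A^n \oplus A^{n-1}$ with differential $d^n(a^n, a^{n-1}) = (0, a^n)$. I define a right $A$-action on $P$ by componentwise right multiplication: for $a \in A^m$, set $(a^n, a^{n-1}).a = (a^n a, a^{n-1} a) \in P^{n+m}$. A direct check using associativity of the multiplication of $A$ shows this action commutes with the left $\Lambda$-action; since $A$ has trivial differential, $d_P$ automatically graded-commutes with the right action, so $P$ becomes a dg $\Lambda$-$A$-bimodule.

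The first key step is to verify that $P$ is right quasi-balanced, i.e., the canonical map $A \to {\rm End}_\Lambda(P)^{\rm opp}$ is a quasi-isomorphism. It suffices to show that for each $n$ the induced map $A^n \to H^n({\rm End}_\Lambda(P))$ is an isomorphism. By (\ref{equ:1}) and Lemma \ref{lem:GProj}, there are natural isomorphisms
$$H^n({\rm End}_\Lambda(P)) \simeq {\rm Hom}_{\mathbf{K}(\Lambda)}(P, P[n]) \simeq \underline{\rm Hom}_\Lambda(A^0, Z^{n+1}(P)).$$
One computes $Z^{n+1}(P) = \{(0, a^n)\mid a^n\in A^n\} \simeq A^n$ as a $\Lambda$-module, on which $A^{-1}$ acts trivially; thus applying Lemma \ref{lem:stablehom} with $M = A^n$ (so $\phi_M = 0$ and $K_M = A^n$) yields $\underline{\rm Hom}_\Lambda(A^0, A^n) \simeq A^n$ via $f \mapsto f(1)$. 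Tracking $a \in A^n$ through these identifications shows the resulting composite $A^n \to A^n$ is the identity.

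For self-compactness of $P$ in $\mathbf{K}(\Lambda)$, I observe that since $P$ is totally acyclic, ${\rm Loc}\langle P\rangle$ is contained in $\mathbf{K}_{\rm tac}(\Lambda\mbox{-Proj})$, and the equivalence $Z^1$ of Lemma \ref{lem:GProj} restricts to ${\rm Loc}\langle P\rangle \stackrel{\sim}\longrightarrow {\rm Loc}\langle A^0\rangle$ with coproducts preserved on both sides. Since $A^0 \in \Lambda\mbox{-Gproj}$ by Subsection 5.2 and $\Lambda\mbox{-}\underline{\rm Gproj} \subseteq (\Lambda\mbox{-}\underline{\rm GProj})^c$, the object $A^0$ is compact in $\Lambda\mbox{-}\underline{\rm GProj}$, hence in ${\rm Loc}\langle A^0\rangle$; transferring via $Z^1$, $P$ is self-compact. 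Proposition \ref{prop:triangle-equi} then yields a triangle equivalence ${\rm Hom}_\Lambda(P, -) \colon {\rm Loc}\langle P\rangle \stackrel{\sim}\longrightarrow \mathbf{D}(A)$ sending $P$ to $A$; composing with a quasi-inverse of the restriction of $Z^1$ produces the first equivalence ${\rm Loc}\langle A^0\rangle \stackrel{\sim}\longrightarrow \mathbf{D}(A)$ sending $A^0$ to $A$, and the second equivalence then follows from Proposition \ref{prop:hered}(2). I expect the main obstacle to be the quasi-balancedness verification: correctly matching multiplication in $A$ with Yoneda composition in the stable category, including the signs introduced by the ${\rm opp}$ convention and by the identification $Z^{n+1}(P) \simeq A^n$, will require careful bookkeeping.
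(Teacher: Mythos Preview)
Your overall strategy coincides with the paper's: endow the complete resolution $P$ with the componentwise right $A$-action, show that the resulting dg $\Lambda$-$A$-bimodule is right quasi-balanced, and invoke Proposition~\ref{prop:triangle-equi}; the second equivalence then follows from Proposition~\ref{prop:hered}(2). The only substantive difference is how you verify right quasi-balancedness. The paper does it by direct computation: using $P^p\simeq \Lambda\otimes_{A^0}A^p$ together with (\ref{equ:iso}) one identifies ${\rm End}_\Lambda(P)^n\simeq \prod_{p\in\mathbb{Z}}(A^n\oplus A^{n-1})$, writes the differential as $\{(x_p^n,y_p^n)\}\mapsto\{(0,\,x_p^n-(-1)^nx_{p+1}^n)\}$, observes that the canonical map sends $a\in A^n$ to $\{((-1)^{pn}a,0)\}_p$, and checks by hand that this hits exactly the $n$-th cohomology. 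Your route through $Z^1$ and Lemma~\ref{lem:stablehom} is a legitimate and more conceptual alternative that recycles machinery already in place. The bookkeeping you worry about does introduce a sign: tracking $a\in A^n$ through $r_a$, then $Z^1$, then $f\mapsto f(1)$ gives $a\mapsto (-1)^n a$ rather than the identity (the sign comes from $r_a$ acting on $Z^1(P)\subseteq P^1$). But this is still an isomorphism in every degree, so the canonical map is a quasi-isomorphism and your argument goes through.
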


If $A^0$ is von Neumann regular and left hereditary, the restriction of the above equivalence to compact objects yields an equivalence
$$\Lambda\mbox{-\underline{\rm Gproj}} \stackrel{\sim}\longrightarrow {\rm perf}(A),$$
which is further equivalent to $(A^0\mbox{-}{\rm proj}, A^{1}\otimes_{A^0}-)$; compare Proposition \ref{prop:hered}(3) and Lemma \ref{lem:perfder}.

\begin{proof}
By Lemma \ref{lem:GProj}, we identify $\Lambda\mbox{-\underline{GProj}}$ with $\mathbf{K}_{\rm tac}(\Lambda\mbox{-}{\rm Proj})$, and in this way $A^0$ is identified with the complex $P$. Then we have a triangle equivalence ${\rm Loc}\langle A^0 \rangle\stackrel{\sim}\longrightarrow {\rm Loc}\langle P \rangle$.

The complex $P$ is naturally a dg $\Lambda$-$A$-bimodule. Indeed, $A$ acts on $P$ as follows: for $a'^m\in A^m$ and $(a^{n}, a^{n-1})\in P^n$, we have $(a^{n}, a^{n-1}).a'^m=(a^{n}a'^m, a^{n-1}a'^m) \in P^{n+m}$. We claim that this dg bimodule is right quasi-balanced and then by Proposition \ref{prop:triangle-equi} we have a triangle equivalence ${\rm Loc}\langle P \rangle\stackrel{\sim}\longrightarrow \mathbf{D}(A)$. Then the first statement follows. The second follows from the first and Proposition \ref{prop:hered}(2).

For the claim, we compute the homomorphism $\Phi\colon A\rightarrow {\rm End}_\Lambda(P)^{\rm opp}$ of dg algebras induced by the right $A$-action on $P$. More precisely, $\Phi(a)=r_a$ for homogeneous elements $a$ in $A$; see Subsection 2.2.   The homogeneous component of degree $n$ of ${\rm End}_\Lambda(P)$ is given as follows. \begin{align*}
{\rm End}_\Lambda(P)^n & =\prod_{p\in \mathbb{Z}} {\rm Hom}_\Lambda(P^p, P^{p+n})\simeq \prod_{p\in \mathbb{Z}} {\rm Hom}_\Lambda(\Lambda\otimes_{A^0}A^{p}, P^{p+n})\\
& \simeq \prod_{p\in \mathbb{Z}} {\rm Hom}_{A^0}(A^{p}, P^{p+n})\simeq \prod_{p\in \mathbb{Z}} (A^{n}\oplus A^{n-1}).
\end{align*}
The last isomorphism is induced by (\ref{equ:iso}); here, we recall that $P^{p+n}=A^{p+n}\oplus A^{p+n-1}$. Then via the above isomorphism, the elements in ${\rm End}_\Lambda(P)^n$ are denoted by $\{(x_p^n, y_p^n)\}_{p\in \mathbb{Z}}$, where $x_p^n\in A^{n}$ and $y_p^n\in A^{n-1}$. The differential  $d^n\colon {\rm End}_\Lambda(P)^n\rightarrow {\rm End}_\Lambda(P)^{n+1}$ sends $\{(x_p^n, y_p^n)\}_{p\in \mathbb{Z}}$ to $\{(0, x_p^n-(-1)^n x_{p+1}^n)\}_{p\in \mathbb{Z}}$. The homomorphism $\Phi$ sends an element $a$ in $A^n$ to $\{((-1)^{pn}a, 0)\}_{p\in \mathbb{Z}}$ in ${\rm End}_\Lambda(P)^n$. Then direct calculation shows that $\Phi$ is a quasi-isomorphism. \end{proof}

\section{Proof of Main Theorem}

In this section, we combine the results in the previous sections to give a proof of Main Theorem in the introduction.

Let $Q$ be a finite quiver. Denote by $A=kQ/J^2$ the corresponding finite-dimensional algebra with radical square zero.
Recall the embedding ${\bf i}\colon \mathbf{D}^b(A\mbox{-mod})\rightarrow \mathbf{K}(A\mbox{-Inj})$ sending a bounded complex $X$ to its homotopically injective resolution ${\bf i} X$. We apply \cite[Lemma 2.1]{Kra} and the isomorphism (\ref{equ:2}) to deduce the following isomorphism for any complex $X$ in $\mathbf{K}(A\mbox{-Inj})$
\begin{align}\label{equ:isoK}
{\rm Hom}_{\mathbf{K}(A\mbox{-}{\rm Inj})}({\bf i} A, X)\simeq {\rm Hom}_{\mathbf{K}(A\mbox{-}{\rm Mod})}(A, X)\simeq H^0(X).
\end{align}
In particular, we infer that $\mathbf{K}_{\rm ac}(A\mbox{-Inj})=({\bf i} A)^\perp$, where $({\bf i} A)^\perp$ denotes the right perpendicular subcategory of $\mathbf{K}(A\mbox{-Inj})$ with respect to ${\bf i}A$, that is,
\[({\bf i}A)^\perp = \{X\in \mathbf{K}(A\mbox{-Inj})\mid \mathrm{Hom}_{\mathbf{K}(A\mbox{-}\mathrm{Inj})}({\bf i}A, X[n])=0\text{ for any }n\in\mathbb{Z}\}.\]

Following \cite{Buc, Or04}, the \emph{singularity category} $\mathbf{D}_{\rm sg}(A)$ of $A$ is the Verdier quotient triangulated category of $\mathbf{D}^b(A\mbox{-mod})$ with respect to ${\rm perf}(A)$; also see \cite{Hap91}. Here, we recall that ${\rm perf}(A)$ is triangle equivalent to $\mathbf{K}^b(A\mbox{-proj})$, the homotopy category of bounded complexes of finitely generated projective $A$-modules. Recall from \cite[Corollary 5.4]{Kra} a triangle  equivalence
\begin{align}\label{equ:tri}
\mathbf{D}_{\rm sg}(A) \stackrel{\sim}\longrightarrow \mathbf{K}_{\rm ac}(A\mbox{-}{\rm Inj})^c.
 \end{align}
 Here, we implicitly use  the fact that $\mathbf{D}_{\rm sg}(A)$ has split idempotents; see \cite[Corollary 2.4]{Ch11}.

The following result contains the first part of Main Theorem. Recall that the Leavitt path algebra $L(Q)=\bigoplus_{n\in \mathbb{Z}} L(Q)^n$ is graded, and that $\Lambda^{+}(Q)=L(Q)^0\ltimes L(Q)^{1}$  denotes the trivial extension of $L(Q)^0$ by the $L(Q)^0$-bimodule $L(Q)^{1}$.

\begin{thm}\label{thm:A}
Let $Q$ be a finite quiver without sinks. Then there are triangle equivalences
$$\mathbf{K}_{\rm ac}(kQ/J^2\mbox{-}{\rm Inj})\stackrel{\sim}\longrightarrow\mathbf{D}(L(Q)^{\rm op}) \stackrel{\sim}\longrightarrow \Lambda^+(Q)\mbox{-}\underline{\rm GProj}.$$
Restricting these equivalences to compact objects, we obtain triangle equivalences
$$\mathbf{D}_{\rm sg}(kQ/J^2)\stackrel{\sim}\longrightarrow {\rm perf}(L(Q)^{\rm op}) \stackrel{\sim}\longrightarrow \Lambda^+(Q)\mbox{-}\underline{\rm Gproj},$$
which is further equivalent to  $(L(Q)^{\rm op}\mbox{-}{\rm grproj}, (1))\stackrel{\sim}\longrightarrow (L(Q)\mbox{-}{\rm grproj}, (-1))$.
\end{thm}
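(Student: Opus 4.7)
The plan is to assemble the first chain of equivalences by combining Koszul duality with the perpendicular-subcategory description of graded universal localization, then invoke the trivial-extension machinery from Section~5 for the second chain, and finally pass to compact objects for the statements about singularity categories.

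First I would apply Koszul duality (Proposition~\ref{prop:Koszul}) to identify $\mathbf{K}(A\text{-Inj}) \stackrel{\sim}\to \mathbf{D}(kQ^{\rm op})$ with ${\bf i}P_i \mapsto T_i$. Since $A=\bigoplus_i P_i$ as a left $A$-module, the object ${\bf i}A$ corresponds to $\bigoplus_i T_i$; together with the isomorphism~(\ref{equ:isoK}) exhibiting $\mathbf{K}_{\rm ac}(A\text{-Inj})$ as $({\bf i}A)^\perp$, this identifies $\mathbf{K}_{\rm ac}(A\text{-Inj})$ with the right perpendicular subcategory $\{T_i \mid i \in Q_0\}^\perp$ inside $\mathbf{D}(kQ^{\rm op})$. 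Next I would invoke Proposition~\ref{prop:Leavittloc}(2) applied to $Q^{\rm op}$: the graded map $\kappa_{Q^{\rm op}}\colon kQ^{\rm op}\to L(Q)^{\rm op}$ is a graded universal localization with respect to the family $\{\eta^{Q^{\rm op}}_i\}$ indexed by vertices that are not sources of $Q^{\rm op}$, i.e., not sinks of $Q$; the no-sinks hypothesis ensures the family is indexed by all of $Q_0$. The cokernels of these monomorphisms $\eta^{Q^{\rm op}}_i$ are, under the identifications $kQ^{\rm op}\simeq(kQ)^{\rm opp}$ used in the Koszul equivalence, precisely the $T_i$ appearing above (their kernels vanish since each $\eta^{Q^{\rm op}}_i$ is mono). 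Since $kQ^{\rm op}$ is graded-hereditary, Proposition~\ref{prop:perpD} then yields $\{T_i\}^\perp \stackrel{\sim}\to \mathbf{D}(L(Q)^{\rm op})$, and composing with the Koszul equivalence gives the desired first triangle equivalence $\mathbf{K}_{\rm ac}(A\text{-Inj})\simeq \mathbf{D}(L(Q)^{\rm op})$.

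For the second equivalence in the first line, I would apply Proposition~\ref{prop:loc} to the strongly graded algebra $L(Q)^{\rm op}$: its zeroth component is $L(Q)^0$, which is von Neumann regular and hereditary by Lemma~\ref{lem:Leavitt}(2), and (under the degree-reversing convention on opposites) its $(-1)$-st component is $L(Q)^1$, so the relevant trivial extension is $L(Q)^0 \ltimes L(Q)^1 = \Lambda^+(Q)$. This gives $\Lambda^+(Q)\text{-}\underline{\rm GProj} \simeq \mathbf{D}(L(Q)^{\rm op})$.

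Finally, for the compact-object statement I would restrict the composite equivalence to compact subcategories. The left-hand side becomes $\mathbf{D}_{\rm sg}(A)$ by the equivalence~(\ref{equ:tri}); the middle becomes $\mathrm{perf}(L(Q)^{\rm op})$ by definition; and the right-hand side becomes $\widetilde{\Lambda^+(Q)\text{-}\underline{\rm Gproj}}$ by Proposition~\ref{prop:hered}(2), which under the hypotheses of Proposition~\ref{prop:hered}(3) coincides with $\Lambda^+(Q)\text{-}\underline{\rm Gproj}$ and is further identified with $(L(Q)^0\text{-proj}, L(Q)^1\otimes_{L(Q)^0}-)$. The chain $(L(Q)^{\rm op}\text{-grproj},(1)) \simeq (L(Q)\text{-grproj},(-1))$ is then furnished by Lemma~\ref{lem:Leavitt}(3) applied to $L(Q)^{\rm op}$ together with Lemma~\ref{lem:Leavitt}(4). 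The main obstacle I anticipate is the bookkeeping in Step~2--3: one must carefully match the $T_i$ arising in the Koszul duality (defined as cokernels in the right $kQ$-module category) with the cokernels of $\eta^{Q^{\rm op}}_i$ needed for Proposition~\ref{prop:perpD}, tracking the identifications among $B=(kQ)^{\rm opp}$, $kQ^{\rm op}$, and $(kQ)^{\rm op}$, and likewise verifying the opposite-grading convention that yields $\Lambda^+(Q)$ rather than $\Lambda^-(Q)$ when applying Proposition~\ref{prop:loc} to $L(Q)^{\rm op}$.
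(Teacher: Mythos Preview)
Your proposal is correct and follows essentially the same route as the paper's proof: Koszul duality to pass to $\mathbf{D}(kQ^{\rm op})$, identification of $\mathbf{K}_{\rm ac}$ with $\{T_i\}^\perp$, Proposition~\ref{prop:Leavittloc}(2) plus Proposition~\ref{prop:perpD} to reach $\mathbf{D}(L(Q)^{\rm op})$, then Proposition~\ref{prop:loc} and Proposition~\ref{prop:hered} for the trivial-extension side and the compact-object statements. The one point to sharpen is the identification of the trivial extension: there is no ``degree-reversing convention on opposites''---one has $(L(Q)^{\rm op})^{-1}=L(Q)^{-1}$ as a bimodule over $(L(Q)^0)^{\rm op}$, and the paper obtains $\Lambda'=(L(Q)^{\rm op})^0\ltimes (L(Q)^{\rm op})^{-1}\simeq \Lambda^+(Q)$ by invoking the involution $(-)^*\colon L(Q)\to L(Q)$, which is exactly the verification you flagged as needing care.
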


The equivalence $\mathbf{D}_{\rm sg}(kQ/J^2)\stackrel{\sim}\longrightarrow (L(Q)\mbox{-}{\rm grproj}, (-1))$ is essentially contained in \cite[Theorem 7.2]{Sm}; compare \cite[Theorem B]{Ch11}. Here, we identify ${\rm qgr} (kQ)$ with $L(Q)\mbox{-}{\rm grproj}$ by \cite[Theorem 5.9]{Sm}.

\begin{proof}
Set $A=kQ/J^2$. Recall from Theorem \ref{prop:Koszul} that there is a triangle equivalence $F\colon \mathbf{K}(A\mbox{-}{\rm Inj})\stackrel{\sim}\longrightarrow\mathbf{D}(kQ^{\rm op})$, that sends ${\bf i} A$ to $\bigoplus_{i\in Q_0} T_i$. Here, we recall that $A=\bigoplus_{i\in Q_0} P_i$ with $P_i$ the indecomposable projective $A$-module corresponding to $i$, and the graded $kQ^{\rm op}$-module $T_i$ is defined in (\ref{equ:4}). Hence, $F$ identifies $\mathbf{K}_{\rm ac}(A\mbox{-Inj})=({\bf i} A)^\perp$ with the right perpendicular subcategory $(\bigoplus_{i\in Q_0} T_i)^\perp=\{T_i\; |\; i \in Q_0\}^\perp$ in $\mathbf{D}(kQ^{\rm op})$.

Recall from Proposition \ref{prop:Leavittloc}(2) that $\kappa_{Q^{\rm op}}\colon kQ^{\rm op}\rightarrow L(Q)^{\rm op}$ is a graded universal localization with respect to $\{\eta^{Q^{\rm op}}_i\; |\; i\in Q_0\}$; here, we use the assumption that $Q^{\rm op}$ has no sources. Recall the exact sequence (\ref{equ:4}) that defines $T_i$. Then Proposition \ref{prop:perpD} yields a triangle equivalence $\{T_i\; |\; i \in Q_0\}^\perp\stackrel{\sim}\longrightarrow \mathbf{D}(L(Q)^{\rm op})$. Then we obtain the desired triangle equivalence $\mathbf{K}_{\rm ac}(A\mbox{-}{\rm Inj})\stackrel{\sim}\longrightarrow\mathbf{D}(L(Q)^{\rm op})$.

We apply Lemma \ref{lem:Leavitt} and then Proposition \ref{prop:loc} to obtain a triangle equivalence $\mathbf{D}(L(Q)^{\rm op})\simeq \Lambda'\mbox{-}\underline{\rm GProj}$, where $\Lambda'=(L(Q)^{\rm op})^0\ltimes (L(Q)^{\rm op})^{-1}$ denotes the trivial extension. Recall the involution $(-)^*\colon L(Q)\rightarrow L(Q)$, which induces an isomorphism $\Lambda'\simeq \Lambda^+(Q)$. Hence, we have the triangle equivalence $\mathbf{D}(L(Q)^{\rm op})\simeq \Lambda^+(Q)\mbox{-}\underline{\rm GProj}$.

In view of the equivalence (\ref{equ:tri}), Proposition \ref{prop:hered} and Lemma \ref{lem:Leavitt}(3) and (4), the equivalences on the subcategories of compact objects follow immediately.
\end{proof}

We now consider $\mathbf{K}(A\mbox{-Proj})$ the homotopy category of complexes of projective $A$-modules. By the isomorphism (\ref{equ:2}), we have $\mathbf{K}_{\rm ac}(A\mbox{-Proj})=A^\perp$, the right perpendicular subcategory of $\mathbf{K}(A\mbox{-Proj})$ with respect to $A$. Applying \cite[Propositions 7.14 and 7.12]{Nee08} and the localization theorem in \cite[8.1.5]{Kel98}, we have that the category $\mathbf{K}_{\rm ac}(A\mbox{-Proj})$ is compactly generated with a triangle equivalence
\begin{align}\label{equ:triP}
 \mathbf{D}_{\rm sg}(A^{\rm op})^{\rm op} \stackrel{\sim}\longrightarrow \mathbf{K}_{\rm ac}(A\mbox{-Proj})^c.
 \end{align}
Here, for any category $\mathcal{C}$, we denote by $\mathcal{C}^{\rm op}$ its opposite category.

Recall the Nakayama functors $\nu=DA\otimes_A-\colon A\mbox{-Proj}\stackrel{\sim}\longrightarrow A\mbox{-Inj}$ is an equivalence, which sends $A$ to $DA$. Thus we have a triangle equivalence $\mathbf{K}(A\mbox{-Proj})\stackrel{\sim}\longrightarrow \mathbf{K}(A\mbox{-Inj})$ sending $A$ to $DA$;  compare \cite[Example 2.6]{Kra}. We conclude with a triangle equivalence
\begin{align}\label{equ:perp}
\mathbf{K}_{\rm ac}(A\mbox{-Proj}) \stackrel{\sim}\longrightarrow (DA)^\perp.
 \end{align}
Here, $(DA)^\perp$ is the right perpendicular subcategory of $\mathbf{K}(A\mbox{-Inj})$ with respect to $DA$, that is,
\[(DA)^\perp = \{X\in \mathbf{K}(A\mbox{-Inj})\mid \mathrm{Hom}_{\mathbf{K}(A\mbox{-}\mathrm{Inj})}(DA, X[n])=0\text{ for any }n\in\mathbb{Z}\}.\]

The following result contains the second part of Main Theorem. Recall that $\Lambda^{-}(Q^{\rm op})=L(Q^{\rm op})^0\ltimes L(Q^{\rm op})^{-1}$ denotes the trivial extension of $L(Q^{\rm op})^0$ by the $L(Q^{\rm op})^0$-bimodule $L(Q^{\rm op})^{-1}$.

\begin{thm}\label{thm:B}
Let $Q$ be a finite quiver without sources. Then there are triangle equivalences
$$\mathbf{K}_{\rm ac}(kQ/J^2\mbox{-}{\rm Proj})\stackrel{\sim}\longrightarrow\mathbf{D}(L(Q^{\rm op})) \stackrel{\sim}\longrightarrow \Lambda^{-}(Q^{\rm op})\mbox{-}\underline{\rm GProj}.$$
Restricting these equivalences to compact objects, we obtain triangle equivalences
$$\mathbf{D}_{\rm sg}(kQ^{\rm op}/J^2)^{\rm op}\stackrel{\sim}\longrightarrow {\rm perf}(L(Q^{\rm op})) \stackrel{\sim}\longrightarrow \Lambda^{-}(Q^{\rm op})\mbox{-}\underline{\rm Gproj},$$
which is further equivalent to  $(L(Q^{\rm op})\mbox{-}{\rm grproj}, (1))$.
\end{thm}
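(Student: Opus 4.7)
The plan is to parallel the proof of Theorem \ref{thm:A}, replacing the input ${\bf i}A=\bigoplus_i {\bf i}P_i$ there by the injective cogenerator $DA=\bigoplus_i I_i$, and consequently trading $T_i$ for $G_i$ under Koszul duality. This routes us through the \emph{other} graded universal localization of Proposition \ref{prop:Leavittloc}, landing at $L(Q^{\rm op})$ rather than $L(Q)^{\rm op}$, and then at $\Lambda^{-}(Q^{\rm op})$ via Proposition \ref{prop:loc} without any involution twist.

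Set $A=kQ/J^2$. First I invoke the Nakayama-induced equivalence (\ref{equ:perp}) to identify $\mathbf{K}_{\rm ac}(A\mbox{-Proj})$ with the right perpendicular subcategory $(DA)^\perp\subseteq\mathbf{K}(A\mbox{-}{\rm Inj})$. Applying the Koszul-duality equivalence $F\colon\mathbf{K}(A\mbox{-}{\rm Inj})\stackrel{\sim}\longrightarrow\mathbf{D}(kQ^{\rm op})$ of Proposition \ref{prop:Koszul} and using $F(I_i)=G_i$, this transports to the right perpendicular subcategory $\{G_i\mid i\in Q_0\}^\perp$ inside $\mathbf{D}(kQ^{\rm op})$. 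The hypothesis that $Q$ has no sources ensures that each $\xi_i$ in (\ref{equ:5}) is a nonzero monomorphism between finitely generated graded projective $kQ^{\rm op}$-modules, so $\{G_i\}^\perp=\{{\rm Ker}\,\xi_i,{\rm Cok}\,\xi_i\mid i\in Q_0\}^\perp$. Since $kQ^{\rm op}$ is graded-hereditary and, by Proposition \ref{prop:Leavittloc}(1) applied with $Q$ replaced by $Q^{\rm op}$ (``no source in $Q$'' equals ``no sink in $Q^{\rm op}$''), $\iota_{Q^{\rm op}}\colon kQ^{\rm op}\to L(Q^{\rm op})$ is the graded universal localization with respect to exactly this family, Proposition \ref{prop:perpD} yields a triangle equivalence $\{G_i\}^\perp\stackrel{\sim}\longrightarrow\mathbf{D}(L(Q^{\rm op}))$.

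For the final step, I apply Proposition \ref{prop:loc} directly to $L(Q^{\rm op})$, which is strongly graded by Lemma \ref{lem:Leavitt}(1) with von Neumann regular hereditary degree-zero component by Lemma \ref{lem:Leavitt}(2). Because $\Lambda^{-}(Q^{\rm op})=L(Q^{\rm op})^0\ltimes L(Q^{\rm op})^{-1}$ by definition, this delivers $\mathbf{D}(L(Q^{\rm op}))\stackrel{\sim}\longrightarrow\Lambda^{-}(Q^{\rm op})\mbox{-}\underline{\rm GProj}$, completing the first chain. Restricting to compact objects, the three sides become $\mathbf{D}_{\rm sg}(A^{\rm op})^{\rm op}$ via (\ref{equ:triP}), ${\rm perf}(L(Q^{\rm op}))$ by definition, and $\Lambda^{-}(Q^{\rm op})\mbox{-}\underline{\rm Gproj}$ by Proposition \ref{prop:hered}(3); Lemma \ref{lem:Leavitt}(3) then supplies the further equivalence with $(L(Q^{\rm op})\mbox{-}{\rm grproj},(1))$. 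The main obstacle I foresee is the bookkeeping in the perpendicular step: one must verify that the $\xi_i$ of (\ref{equ:5}) (there written for right $kQ$-modules) genuinely matches the morphism inverted by $\iota_{Q^{\rm op}}$ in Proposition \ref{prop:Leavittloc}(1) once right $kQ$-modules are identified with left $kQ^{\rm op}$-modules --- exactly the left-right dual of the $\eta_i$-identification carried out in the proof of Theorem \ref{thm:A}.
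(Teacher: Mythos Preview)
Your proposal is correct and follows essentially the same route as the paper's proof: identify $\mathbf{K}_{\rm ac}(A\mbox{-Proj})$ with $(DA)^\perp$ via (\ref{equ:perp}), transport to $\{G_i\}^\perp$ under Koszul duality (Proposition \ref{prop:Koszul}), apply Proposition \ref{prop:perpD} together with Proposition \ref{prop:Leavittloc}(1) for $Q^{\rm op}$ to reach $\mathbf{D}(L(Q^{\rm op}))$, and then invoke Lemma \ref{lem:Leavitt} and Proposition \ref{prop:loc} for the Gorenstein-projective side, with (\ref{equ:triP}), Proposition \ref{prop:hered}(3) and Lemma \ref{lem:Leavitt}(3) handling the compact objects. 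The bookkeeping concern you flag about matching the $\xi_i$ of (\ref{equ:5}) with $\xi_i^{Q^{\rm op}}$ is exactly the identification the paper makes implicitly (see the remark preceding Proposition \ref{prop:Leavittloc} about replacing $Q$ by $Q^{\rm op}$ in (\ref{equ:4}) and (\ref{equ:5})).
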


\begin{proof}
Set $A=kQ/J^2$ as above. Recall from Theorem \ref{prop:Koszul} the triangle equivalence $F\colon \mathbf{K}(A\mbox{-}{\rm Inj})\stackrel{\sim}\longrightarrow \mathbf{D}(kQ^{\rm op})$, that sends $DA$ to $\bigoplus_{i\in Q_0} G_i$. Here, we recall that $DA\simeq \bigoplus_{i\in Q_0} I_i$ with $I_i$ the indecomposable injective $A$-module corresponding to $i$, and that the graded simple $kQ^{\rm op}$-module  $G_i$ in given in (\ref{equ:5}). Hence, $F$ identifies $(DA)^\perp$ in $\mathbf{K}(A\mbox{-}{\rm Inj})$ with $(\bigoplus_{i\in Q_0} G_i)^\perp=\{G_i\; |\; i\in Q_0\}^\perp$ in $\mathbf{D}(kQ^{\rm op})$.

Recall from Proposition \ref{prop:Leavittloc}(1) that the homomorphism $\iota_{Q^{\rm op}}\colon kQ^{\rm op}\rightarrow L(Q^{\rm op})$ is a graded universal localization with respect to $\{\xi^{Q^{\rm op}}_i\; |\; i\in Q_0\}$; here, we use the assumption that $Q^{\rm op}$ has no sinks. By Proposition \ref{prop:perpD}, we have a triangle equivalence $\{G_i\; |\; i\in Q_0\}^\perp \stackrel{\sim}\longrightarrow \mathbf{D}(L(Q^{\rm op}))$. In view of the equivalence (\ref{equ:perp}), we have the required equivalence $\mathbf{K}_{\rm ac}(A\mbox{-}{\rm Proj})\stackrel{\sim}\longrightarrow\mathbf{D}(L(Q^{\rm op}))$.

The triangle equivalence $\mathbf{D}(L(Q^{\rm op})) \stackrel{\sim}\longrightarrow \Lambda^{-}(Q^{\rm op})\mbox{-}\underline{\rm GProj}$ follows from Lemma \ref{lem:Leavitt} and Proposition \ref{prop:loc}. In view of the equivalence (\ref{equ:triP}), Proposition \ref{prop:hered} and Lemma \ref{lem:Leavitt}(3), the equivalences on the subcategories of compact objects follow immediately.
\end{proof}

\begin{rem}
We remark that by replacing $Q$ by $Q^{\rm op}$, the equivalences on the categories of compact objects obtained in Theorems \ref{thm:A} and \ref{thm:B} may be identified. To this end, we  recall that for any dg algebra $B$, there is a triangle equivalence \begin{align}\label{equ:rem1}
{\rm perf}(B^{\rm opp})\stackrel{\sim}\longrightarrow {\rm perf}(B)^{\rm op}\end{align}
via the functor ${\rm {\bf R}}{\rm Hom}_{B^{\rm o
pp}}(-, B)$, the right derived functor of ${\rm Hom}_{B^{\rm opp}}(-, B)$. For an algebra $\Lambda$, there is triangle equivalence
\begin{align}\label{equ:rem2}
\Lambda\mbox{-}{\rm \underline{Gproj}} \stackrel{\sim}\longrightarrow (\Lambda^{\rm op}\mbox{-}\underline{{\rm Gproj}})^{\rm op}
\end{align}
induced by ${\rm Hom}_\Lambda(-, \Lambda)$. Recall the isomorphisms $L(Q)^{\rm opp}\simeq L(Q)^{\rm op}$ and $\Lambda^{+}(Q)\simeq \Lambda^{-}(Q)^{\rm op}$ induced by the involution on the Leavitt path algebra.  Then via the equivalences (\ref{equ:rem1}) and (\ref{equ:rem2}) the triangle equivalences on compact objects in Theorems \ref{thm:A} and \ref{thm:B} are identified, up to taking opposite categories.
\end{rem}

We conclude the paper with an immediate consequence of Theorems \ref{thm:A} and \ref{thm:B}.

Recall that two finite-dimensional algebras $A_1$ and $A_2$ are said to be \emph{singularly equivalent}  if there is a triangle equivalence between their singularity categories. By (\ref{equ:tri}) the existence of a triangle equivalence $\mathbf{K}_{\rm ac}(A_1\mbox{-Inj}) \stackrel{\sim}\longrightarrow \mathbf{K}_{\rm ac}(A_2\mbox{-Inj})$ implies that $A_1$ and $A_2$ are singularly equivalent. In general, we do not know whether the converse is true; compare the discussion after \cite[Proposition 2.3]{Kra}. However, the following result shows that the converse holds for certain algebras with radical square zero.

Two graded algebras $B_1$ and $B_2$ are said to be \emph{graded Morita equivalent}  if there is an equivalence $B_1\mbox{-Gr}\stackrel{\sim}\longrightarrow B_2\mbox{-Gr}$ that commutes with the degree-shift functors; they are said to be \emph{derived equivalent} if there exists a triangle equivalence $\mathbf{D}(B_1)\stackrel{\sim}\longrightarrow \mathbf{D}(B_2)$, where graded algebras are viewed as dg algebras with trivial differentials.

\begin{prop}\label{prop:equi}
Let $Q$ and $Q'$ be finite quivers without sinks. Then the following statements are equivalent:
\begin{enumerate}
\item the algebras $kQ/J^2$ and $kQ'/J^2$ are singularly equivalent;
\item the Leavitt path algebras $L(Q)$ and $L(Q')$ are graded Morita equivalent;
\item the Leavitt path algebras $L(Q)$ and $L(Q')$ are derived equivalent;
\item the opposite Leavitt path algebras $L(Q)^{\rm op}$ and $L(Q')^{\rm op}$ are derived equivalent;
\item there is a triangle equivalence $\mathbf{K}_{\rm ac}(kQ/J^2\mbox{-}{\rm Inj}) \stackrel{\sim}\longrightarrow \mathbf{K}_{\rm ac}(kQ'/J^2\mbox{-}{\rm Inj})$;
\item there is a triangle equivalence $\mathbf{K}_{\rm ac}(kQ^{\rm op}/J^2\mbox{-}{\rm Proj}) \stackrel{\sim}\longrightarrow \mathbf{K}_{\rm ac}(kQ'^{\rm op}/J^2\mbox{-}{\rm Proj})$;
\item the algebras $\Lambda^+(Q)$ and $\Lambda^+(Q')$ are Morita equivalent.
\end{enumerate}
\end{prop}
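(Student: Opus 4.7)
The plan is to prove all six conditions are equivalent by reducing them to condition (2), with Theorems \ref{thm:A} and \ref{thm:B} providing the main bridges. First I would observe that (5) $\Leftrightarrow$ (4) and (6) $\Leftrightarrow$ (3) are essentially immediate. Indeed, applying Theorem \ref{thm:A} to $Q$ and $Q'$ (both without sinks) gives triangle equivalences $\mathbf{K}_{\rm ac}(kQ/J^2\mbox{-}{\rm Inj}) \simeq \mathbf{D}(L(Q)^{\rm op})$ and its $Q'$-analogue, so composing with these transports any triangle equivalence of the left-hand sides into one between the right-hand sides and vice versa. Applying Theorem \ref{thm:B} to $Q^{\rm op}$ and $Q'^{\rm op}$ (which have no sources by hypothesis on $Q$ and $Q'$) yields $\mathbf{K}_{\rm ac}(kQ^{\rm op}/J^2\mbox{-}{\rm Proj}) \simeq \mathbf{D}(L(Q))$ and its $Q'$-analogue, establishing (6) $\Leftrightarrow$ (3).

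Next I would prove (1) $\Leftrightarrow$ (2) using the compact-object part of Theorem \ref{thm:A}, which furnishes a triangle equivalence $\mathbf{D}_{\rm sg}(kQ/J^2) \simeq (L(Q)\mbox{-}{\rm grproj}, (-1))$, and similarly for $Q'$. Hence (1) is equivalent to a triangle equivalence $(L(Q)\mbox{-}{\rm grproj}, (-1)) \simeq (L(Q')\mbox{-}{\rm grproj}, (-1))$. Since $L(Q)\mbox{-}{\rm grproj} \simeq L(Q)^0\mbox{-}{\rm proj}$ is semisimple abelian by Lemma \ref{lem:Leavitt}, all triangles in the relevant category split, and such a triangle equivalence is precisely an additive equivalence commuting with the auto-equivalence $(-1)$, equivalently with its inverse $(1)$. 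By the graded Morita theorem, an equivalence between categories of finitely generated graded projective modules commuting with the degree shift extends uniquely to a graded Morita equivalence of the full graded module categories, which is exactly (2).

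To close the cycle I would prove (2) $\Leftrightarrow$ (3) and (2) $\Leftrightarrow$ (4). For (2) $\Rightarrow$ (3) and (2) $\Rightarrow$ (4), a graded Morita equivalence is realized by an invertible graded $L(Q')$-$L(Q)$-bimodule $P$; regarding $P$ as a dg bimodule with trivial differential, the functor $P \otimes_{L(Q)} -$ (which coincides with the derived tensor since $P$ is graded projective on both sides) produces a triangle equivalence $\mathbf{D}(L(Q)) \simeq \mathbf{D}(L(Q'))$, and the same construction over the opposite algebras yields (4). Conversely, a triangle equivalence as in (3) restricts to compact objects and, via Lemma \ref{lem:Leavitt}(3), yields a triangle equivalence $(L(Q)\mbox{-}{\rm grproj}, (1)) \simeq (L(Q')\mbox{-}{\rm grproj}, (1))$, which by the same argument as in the previous paragraph is a graded Morita equivalence. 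The implication (4) $\Rightarrow$ (2) is analogous, using Lemma \ref{lem:Leavitt}(4) to identify $(L(Q)^{\rm op}\mbox{-}{\rm grproj}, (1))$ with $(L(Q)\mbox{-}{\rm grproj}, (-1))$.

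The main obstacle I expect will be the clean invocation of the graded Morita theorem: one must verify that an additive equivalence of the $(-1)$-twisted (or $(1)$-twisted) categories of finitely generated graded projective modules genuinely extends to a graded Morita equivalence of the ambient graded module categories. This is standard but forms the essential bridge between the triangulated-equivalence data produced by Theorems \ref{thm:A} and \ref{thm:B} and the algebraic notion of graded Morita equivalence; once it is in place, the rest of the argument is pure bookkeeping.
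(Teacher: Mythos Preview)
Your proposal is correct and uses essentially the same ingredients as the paper's own proof: Theorems~\ref{thm:A} and~\ref{thm:B} as bridges to the derived categories of Leavitt path algebras, restriction to compact objects together with Lemma~\ref{lem:Leavitt}(3)(4) to identify ${\rm perf}$ with $(\mbox{-}{\rm grproj},(1))$, and the extension of an equivalence on finitely generated graded projectives (commuting with the degree shift) to a graded Morita equivalence. The only difference is organizational: the paper argues via two cycles $(1)\Rightarrow(2)\Rightarrow(3)\Rightarrow(6)\Rightarrow(1)$ and $(1)\Rightarrow(2)\Rightarrow(4)\Rightarrow(5)\Rightarrow(1)$, invoking the compact-generation equivalence (\ref{equ:triP}) for the step $(6)\Rightarrow(1)$, whereas you establish the biconditionals $(5)\Leftrightarrow(4)$, $(6)\Leftrightarrow(3)$, $(2)\Leftrightarrow(3)$, $(2)\Leftrightarrow(4)$ and $(1)\Leftrightarrow(2)$ directly and thereby avoid appealing to (\ref{equ:triP}) separately. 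Your identification of the graded Morita extension as the one genuine verification needed is exactly right, and the paper treats it equally tersely.
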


\begin{proof}
We only prove the implications ``$(1)\Rightarrow (2) \Rightarrow (3)\Rightarrow (6)\Rightarrow (1)$", while the proof of ``$(1)\Rightarrow (2) \Rightarrow (4)\Rightarrow (5)\Rightarrow (1)$" is similar. Here, we recall that two graded algebras are graded Morita equivalent if and only if so are their opposite algebras.

For ``$(1)\Rightarrow (2)$",  we observe that the equivalences on compact objects in Theorem \ref{thm:A} imply that there is an equivalence $L(Q)\mbox{-grproj}\stackrel{\sim}\longrightarrow L(Q')\mbox{-grproj}$ that commutes with $(-1)$ and then with $(1)$. This equivalence extends to the whole graded module categories.

The implication ``$(2)\Rightarrow (3)$" follows from a general fact: any two graded Morita equivalent algebras $B_1$ and $B_2$ are derived equivalent. Indeed, from the graded Morita equivalence there is a balanced $B_1$-$B_2$-bimodule $P$ such that as a $B_1$-module $P$ is a projective generator with an isomorphism $B_2^{\rm op}\simeq {\rm End}_{B_1}(P)$ of graded algebras. Then the functor ${\rm Hom}_{B_1}(P, -)\colon \mathbf{D}(B_2)\rightarrow \mathbf{D}(B_1)$ is a triangle equivalence; compare \cite[8.1.4]{Kel98}.

The implication ``$(3)\Rightarrow  (6)$" follows from Theorem \ref{thm:B}, while  ``$(6)\Rightarrow (1)$"  follows from (\ref{equ:triP}).

To complete the proof, it suffices to show the implications ``$(2)\Rightarrow (7)$" and ``$(7) \Rightarrow (5)$". Recall a general fact: if two strongly graded algebra $\Lambda$ and $\Lambda'$ are graded Morita equivalent via a balanced $\Lambda$-$\Lambda'$-bimodule $P$, then the corresponding trivial extension algebras $\Lambda^0\ltimes \Lambda^{1}$ and $\Lambda'^0\ltimes \Lambda'^{1}$ are Morita equivalent via the balanced bimodule $P^0\oplus P^1$. Then the implication ``$(2)\Rightarrow (7)$" follows.

The implication ``$(7) \Rightarrow (5)$" follows from Theorem \ref{thm:A} and the following  general fact: for two Morita equivalent algebras $\Lambda$ and $\Lambda'$, we have a triangle equivalence $\Lambda\mbox{-}\underline{\rm GProj}\stackrel{\sim}\longrightarrow \Lambda'\mbox{-}\underline{\rm GProj}$, that is induced from the Morita equivalence.
\end{proof}

The above result is related to  \cite[Remark 7.3]{Haz12}. Recall that the graded Grothendieck group $K_0^{\rm gr}(B)$ of a graded algebra $B$ is a pre-ordered abelian group which carries an automorphism induced by the degree-shift functor $(1)$ on modules. A graded Morita  equivalence between $L(Q)$ and $L(Q')$ induces an isomorphism $$K_0^{\rm gr}(L(Q))\stackrel{\sim}\longrightarrow K_0^{\rm gr}(L(Q'))$$ of pre-ordered abelian groups with automorphisms. It is conjectured that the converse might be true.

\bibliography{}

\vskip 10pt

 {\footnotesize \noindent Xiao-Wu Chen\\
  School of Mathematical Sciences, University of Science and Technology of
China, Hefei 230026, Anhui, PR China \\
Wu Wen-Tsun Key Laboratory of Mathematics, USTC, Chinese Academy of Sciences, Hefei 230026, Anhui, PR China.\\
URL: http://home.ustc.edu.cn/$^\sim$xwchen\\

\noindent Dong Yang\\
Department of Mathematics, Nanjing University, 22 Hankou Road, Nanjing 210093, P. R. China.
}

\end{document}